\newtheorem{remark}[theorem]{Remark}
\newtheorem{example}[theorem]{Example}
\newcommand{\CC}{\mathcal{C}}
\renewcommand{\SS}{\mathcal{S}}
\newcommand{\TT}{\mathcal{T}}
\newcommand{\WW}{\mathcal{W}}
\newcommand{\DD}{\mathcal{D}}
\newcommand{\NN}{\mathcal{N}}
\newcommand{\UU}{\mathcal{U}}
\renewcommand{\AA}{\mathcal{A}}
\newcommand{\MM}{\mathcal{M}}
\newcommand{\EE}{\mathcal{E}}
\newcommand{\II}{\mathcal{I}}
\newcommand{\abs}[1]{\ensuremath{\left\lvert{#1}\right\rvert}}
\newcommand{\norm}[1]{\ensuremath{\| #1 \|}}
\newcommand{\nll}{\mathrm{null}}
\newcommand{\rge}{\mathrm{range}}
\newcommand{\realpart}{\mathrm{Re}}
\newcommand{\real}{{\mathbb{R}}}
\newcommand{\realpositive}{{\mathbb{R}}_{>0}}
\newcommand{\realnonnegative}{{\mathbb{R}}_{\ge 0}}
\newcommand{\realnonpositive}{{\mathbb{R}}_{\le 0}}
\newcommand{\integerspositive}{\mathbb{Z}_{\geq 1}}
\newcommand{\eps}{\epsilon}
\newcommand{\until}[1]{\{1,\dots,#1\}}
\newcommand{\map}[3]{#1:#2 \rightarrow #3}
\newcommand{\setmap}[3]{#1:#2 \rightrightarrows #3}
\newcommand{\setr}[1]{\{#1\}}
\newcommand{\Lie}{\mathcal{L}}
\newcommand{\gradient}{\nabla}
\newcommand{\setdef}[2]{\{#1 \; | \; #2\}}
\newcommand{\rrarrows}{\rightrightarrows}
\newcommand{\levelset}[2]{#1^{-1}({\leq #2})}
\newcommand{\saddleset}[1]{\operatorname{Saddle}(#1)}
\newcommand{\xo}{x_{*}}
\newcommand{\zo}{z_{*}}
\newcommand{\xb}{\bar{x}}
\newcommand{\zb}{\bar{z}}
\newcommand{\xt}{\tilde{x}}
\newcommand{\zt}{\tilde{z}}
\newcommand{\SD}{X_{\text{sp}}}
\newcommand{\proj}{\mathrm{proj}}
\newcommand{\oprocendsymbol}{\hbox{$\bullet$}}
\newcommand{\oprocend}{\relax\ifmmode\else\unskip\hfill\fi\oprocendsymbol}
\newcommand{\longthmtitle}[1]{\mbox{}\textup{\textsl{(#1):}}}
\begin{document}

\title{Saddle-point dynamics: conditions for asymptotic stability of
  saddle points \thanks{A preliminary version of this paper appeared at
    the 2015 American Control Conference as~\cite{AC-JC:15-acc}.}}

  \author{Ashish Cherukuri\footnotemark[2] \and 
         Bahman Gharesifard\footnotemark[3] \and
         Jorge Cort\'{e}s\footnotemark[2]}
\renewcommand{\thefootnote}{\fnsymbol{footnote}}
  \footnotetext[2]{Department of Mechanical and Aerospace Engineering, University of California, San Diego, 9500 Gilman Dr., La Jolla, CA 92093-0411, United States, \texttt{\{acheruku,cortes\}@ucsd.edu}}
  \footnotetext[3]{Department of Mathematics and Statistics, Queen's University, 403 Jeffery Hall, University Ave., Kingston, ON, Canada K7L3N6, \texttt{bahman@mast.queensu.ca}}

\maketitle

\begin{abstract}
  This paper considers continuously differentiable functions of two
  vector variables that have (possibly a continuum of) min-max saddle
  points. We study the asymptotic convergence properties of the
  associated saddle-point dynamics (gradient-descent in the first
  variable and gradient-ascent in the second one).  We identify a
  suite of complementary conditions under which the set of saddle
  points is asymptotically stable under the saddle-point dynamics.
  Our first set of results is based on the convexity-concavity of the
  function defining the saddle-point dynamics to establish the
  convergence guarantees.  For functions that do not enjoy this
  feature, our second set of results relies on properties of the
  linearization of the dynamics, the function along the proximal
  normals to the saddle set, and the linearity of the function in one
  variable.  We also provide global versions of the asymptotic
  convergence results.  Various examples illustrate our discussion.
\end{abstract}
 
\begin{keywords}
  saddle-point dynamics, asymptotic convergence, convex-concave
  functions, proximal calculus, center manifold theory, nonsmooth
  dynamics
\end{keywords}

\begin{AMS}
  34A34, 34D05, 34D23, 34D35, 37L10 
\end{AMS}

\section{Introduction}\label{se:Intro}

It is well known that the trajectories of the gradient dynamics of a
continuously differentiable function with bounded sublevel sets
converge asymptotically to its set of critical points, see
e.g.~\cite{WMH-SS:74}.  This fact, however, is not true in general for
the saddle-point dynamics (gradient descent in one variable and
gradient ascent in the other) of a continuously differentiable
function of two variables, see e.g.~\cite{KA-LH-HU:58,RD-PAS-RS:58}.
In this paper, our aim is to investigate conditions under which the
above statement is true for the case where the critical points are
min-max saddle points and they possibly form a continuum.  Our
motivation comes from the applications of the saddle-point dynamics
(also known as primal-dual dynamics) to find solutions of equality
constrained optimization problems and Nash equilibria of zero-sum
games.

\subsubsection*{Literature review}

In constrained optimization problems, the pioneering
works~\cite{KA-LH-HU:58,TK:56} popularized the use of the primal-dual
dynamics to arrive at the saddle points of the Lagrangian. For
inequality constrained problems, this dynamics is modified with a
projection operator on the dual variables to preserve their
nonnegativity, which results in a discontinuous vector field.
Recent works have further explored the convergence analysis of such
dynamics, both in continuous~\cite{DF-FP:10,AC-EM-JC:16-scl} and in
discrete~\cite{AN-AO:09-jota} time. The work~\cite{HBD-CE:11} proposes
instead a continuous dynamics whose design builds on first- and
second-order information of the Lagrangian.  In the context of
distributed control and multi-agent systems, an important motivation
to study saddle-point dynamics comes from network optimization
problems where the objective function is an aggregate of each agents'
local objective function and the constraints are given by a set of
conditions that are locally computable at the agent level.  Because of
this structure, the saddle-point dynamics of the Lagrangian for such
problems is inherently amenable to distributed implementation. This
observation explains the emerging body of work that, from this
perspective, looks at problems in distributed convex
optimization~\cite{JW-NE:11,BG-JC:14-tac,GD-ME:14}, distributed linear
programming~\cite{DR-JC:15-tac}, and applications to power
networks~\cite{XM-NE:13,XZ-AP:13,CZ-UT-NL-SL:14} and bargaining
problems~\cite{DR-JC:16-tcns}. The work~\cite{VAK-YSP:87} shows an
interesting application of the saddle-point dynamics to find a common
Lyapunov function for a linear differential inclusion.  In game
theory, it is natural to study the convergence properties of
saddle-point dynamics to find the Nash equilibria of two-person
zero-sum games~\cite{TB-GJO:82,LJR-SAB-SSS:13}.  
A majority of these works assume the function whose saddle points are
sought to be convex-concave in its arguments.  Our focus here instead
is on the asymptotic stability of the min-max saddle points under the
saddle-point dynamics for a wider class of functions, and without any
nonnegativity-preserving projection on individual variables.  We
explicitly allow for the possibility of a continuum of saddle points,
instead of isolated ones, and wherever feasible, on establishing
convergence of the trajectories to a point in the set. The issue of
asymptotic convergence, even in the case of standard gradient systems,
is a delicate one when equilibria are a continuum~\cite{PAA-KK:06}. In
such scenarios, convergence to a point might not be guaranteed, see
e.g., the counter example in~\cite{JP-WD:82}. Our work here is
complementary to~\cite{TH-IL:14}, which focuses on the
characterization of the asymptotic behavior of the saddle-point
dynamics when trajectories do not converge to saddle points and
instead show oscillatory behaviour.

\subsubsection*{Statement of contributions}

Our starting point is the definition of the saddle-point dynamics for
continuously differentiable functions of two (vector) variables, which
we term saddle functions. The saddle-point dynamics consists of
gradient descent of the saddle function in the first variable and
gradient ascent in the second variable. Our objective is to
characterize the asymptotic convergence properties of the saddle-point
dynamics to the set of min-max saddle points of the saddle
function. Assuming this set is nonempty, our contributions can be
understood as a catalog of complementary conditions on the saddle
function that guarantee that the trajectories of the saddle-point
dynamics are proved to converge to the set of saddle points, and possibly
to a point in the set. We broadly divide our results in two
categories, one in which the saddle function has convexity-concavity
properties and the other in which it does not.  For the first
category, our starting result considers saddle functions that are
locally convex-concave on the set of saddle points.  We show that
asymptotic stability of the set of saddle points is guaranteed if
either the convexity or concavity properties are strict, and
convergence is pointwise. Furthermore, motivated by equality
constrained optimization problems, our second result shows that the
same conclusions on convergence hold for functions that depend
linearly on one of its arguments if the strictness requirement is
dropped.  For the third and last result in this category, we relax the
convexity-concavity requirement and establish asymptotic convergence
for strongly jointly quasiconvex-quasiconcave saddle functions.
Moving on to the second category of scenarios, where functions lack
convexity-concavity properties, our first condition is based on
linearization. We consider piecewise twice continuously differentiable
saddle-point dynamics and provide conditions on the eigenvalues of the
limit points of Jacobian matrices of the saddle function at the saddle
points that ensure local asymptotic stability of a manifold of saddle
points. Our convergence analysis is based on a general result of
independent interest on the stability of a manifold of equilibria for
piecewise smooth vector fields that we state and prove using ideas
from center manifold theory. The next two results are motivated by the
observation that saddle functions exist in the second category that do
not satisfy the linearization hypotheses and yet have convergent
dynamics. In one result, we justify convergence by studying the
variation of the function and its Hessian along the proximal normal
directions to the set of saddle points.  Specifically, we assume
polynomial bounds for these variations and derive an appropriate
relationship between these bounds that ensures asymptotic convergence.
In the other result, we assume the saddle function to be linear in one
variable and indefinite in another, where the indefinite part
satisfies some appropriate regularity conditions.  When discussing
each of the above scenarios, we extend the conditions to obtain global
convergence wherever feasible. Our analysis is based on tools and
notions from saddle points, stability analysis of nonlinear systems,
proximal normals, and center manifold theory.  Various 
examples throughout the paper justify the complementary character of
the hypotheses in our results.

\subsubsection*{Organization}
Section~\ref{sec:prelims} introduces notation and basic preliminaries.
Section~\ref{sec:problem} presents the saddle-point dynamics and the
problem statement.  Section~\ref{sec:case1} deals with saddle
functions with convexity-concavity properties. For the case when this
property does not hold, Section~\ref{sec:case2} relies on
linearization techniques, proximal normals, and the linearity
structure of the saddle function to establish convergence guarantees.
Finally, Section~\ref{sec:conclusions} summarizes our conclusions and
ideas for future work.

\section{Preliminaries}\label{sec:prelims}
This section introduces basic notation and presents preliminaries on
proximal calculus and saddle points.

\subsection{Notation}\label{sec:notation}

We let $\real$, $\realnonnegative$, $\realnonpositive$,
$\realpositive$ and $\integerspositive$ be the set of real,
nonnegative real, nonpositive real, positive real, and positive
integer numbers, respectively.  Given two sets $\AA_1, \AA_2 \subset
\real^n$, we let $\AA_1 + \AA_2 = \setdef{x + y}{x \in \AA_1, y \in
  \AA_2}$.  We denote by $\norm{\cdot}$ the $2$-norm on $\real^n$ and
also the induced $2$-norm on $\real^{n \times n}$. Let $B_{\delta}(x)$
represent the open ball centered at $x \in \real^n$ of radius $\delta
> 0$.  Given $x\in \real^n$, $x_i$ denotes the $i$-th component of
$x$.  For vectors $u \in \real^n$ and $w \in \real^m$, the vector
$(u;w) \in \real^{n+m}$ denotes their concatenation.  For $A \in
\real^{n \times n}$, we use $A \succeq 0$, $A \preceq 0$, $A \succ 0$,
and $A \prec 0$ to denote the fact that $A$ is positive semidefinite,
negative semidefinite, positive definite, and negative definite,
respectively. The eigenvalues of $A$ are $\lambda_i(A)$ for $i \in
\until{n}$. If $A$ is symmetric, $\lambda_{\max}(A)$ and
$\lambda_{\min}(A)$ represent the maximum and minimum eigenvalues,
respectively. The range and null spaces of $A$ are denoted by
$\rge(A)$, $\nll(A)$, respectively. We use the notation $\CC^k$ for a
function being $ k \in \integerspositive $ times continuously
differentiable.
A set $\SS \subset \real^n$
is \emph{path connected} if for any two points $a,b \in \SS$ there
exists a continuous map $\map{\gamma}{[0,1]}{\SS}$ such that
$\gamma(0) =a$ and $\gamma(1) = b$. A set $\SS_c \subset \SS \subset
\real^n$ is an \emph{isolated path connected component} of $\SS$ if it
is path connected
and there exists an open neighborhood $\UU$ of $\SS_c$ in $\real^n$
such that $\UU \cap \SS = \SS_c$.  For a real-valued function
$F:\real^n \times \real^m \to \real$, we denote the partial derivative
of $F$ with respect to the first argument by $\gradient_x F$ and with
respect to the second argument by $\gradient_z F$.  The higher-order
derivatives follow the convention $\gradient_{xz} F = \frac{\partial^2
  F}{\partial x \partial z}$, $\gradient_{xx} F = \frac{\partial^{2}
  F}{\partial x^2}$, and so on.  The restriction of
$\map{f}{\real^n}{\real^m}$ to a subset $\SS \subset \real^n$ is
denoted by $f_{|S}$.  The Jacobian of a $\CC^1$ map
$\map{f}{\real^n}{\real^m}$ at $x \in \real^n$ is denoted by $Df(x) \in
\real^{m \times n}$.  For a real-valued function
$\map{V}{\real^n}{\real}$ and $\alpha>0$, we denote the sublevel set
of $V$ by~$\levelset{V}{\alpha} =\setdef{x \in \real^n}{V(x) \le
  \alpha}$.  Finally, a vector field $\map{f}{\real^n}{\real^n}$ is
said to be \emph{piecewise $\CC^2$} if it is continuous and there
exists (1) a finite collection of disjoint open sets $\DD_1, \dots,
\DD_m \subset \real^n$, referred to as \emph{patches}, whose closure
covers $\real^n$, that is, $\real^n = \cup_{i=1}^m \mathrm{cl}(\DD_i)$
and (2) a finite collection of $\CC^2$ functions
$\{\map{f_i}{\DD^e_i}{\real^n}\}_{i=1}^m$ where, for each $i \in
\until{m}$, $\DD^e_i$ is open with $\mathrm{cl}({\DD_i}) \subset
\DD^e_i$, such that $f_{|\mathrm{cl}(\DD_i)}$ and $f_i$ take the same
values over $\mathrm{cl}(\DD_i)$.
 
\subsection{Proximal calculus}\label{sec:proximal-calculus}

We present here a few notions on proximal calculus
following~\cite{FHC-YSL-RJS-PRW:98}. Given a closed set
$\EE \subset \real^n$ and a point $x \in \real^n \backslash \EE$, the
distance from $x$ to $\EE$ is,
\begin{equation}\label{eq:distance}
  d_{\EE}(x) = \min_{y \in \EE} \norm{x - y}.
\end{equation}
We let $\proj_{\EE} (x)$ denote the set of points in $\EE$ that are
closest to $x$, i.e.,
$\proj_{\EE}(x) = \setdef{y \in \EE}{\norm{x-y} = d_{\EE}(x)} \subset
\EE$.
For $y \in \proj_{\EE}(x)$, the vector $x - y$ is a \emph{proximal
  normal direction} to $\EE$ at $y$ and any nonnegative multiple
$\zeta = t(x-y)$, $t \ge 0$ is called a \emph{proximal normal}
($P$-normal) to $\EE$ at $y$.
The distance function $d_{\EE}$ might not be differentiable in general
(unless $\EE$ is convex), but is globally Lipschitz and
regular~\cite[p. 23]{FHC-YSL-RJS-PRW:98}.  For a locally Lipschitz
function $f:\real^n \to \real$, the \emph{generalized gradient}
$\partial f: \real^n \rrarrows \real^n$ is
\begin{equation*}
  \partial f(x) = \mathrm{co} \setdef{ \lim_{i \rightarrow \infty} 
    \gradient f(x_i)}{ x_i \rightarrow x, x_i \notin S \cup \Omega_f},
\end{equation*}
where $\mathrm{co}$ denotes convex hull, $S \subset \real^n$ is any
set of measure zero, and $\Omega_f$ is the set (of measure zero) of
points where $f$ is not differentiable. In the case of the square of
the distance function, one can
compute~\cite[p.~99]{FHC-YSL-RJS-PRW:98} the generalized gradient as,
\begin{align}\label{eq:gen-gradient-d}
  \partial d_{\EE}^2(x) = \mathrm{co}\setdef{2(x-y)}{y \in
  \proj_{\EE}(x)}.
\end{align}

\subsection{Saddle points}\label{sec:saddle-points}

Here, we provide basic definitions pertaining to the notion of saddle
points.  A point $(\xo,\zo) \in \real^n \times \real^m$ is a
\emph{local min-max saddle point} of a continuously differentiable
function $F:\real^n \times \real^m \rightarrow \real$ if there exist
open neighborhoods $\UU_{\xo} \subset \real^n$ of $\xo$ and $\UU_{\zo}
\subset \real^m$ of $\zo$ such that
\begin{align}\label{eq:saddleinequality}
  F(\xo, z) & \le F(\xo, \zo) \le F(x, \zo) ,
\end{align}
for all $ z \in \UU_{\zo}$ and $x \in \UU_{\xo}$.  The point $(\xo,
\zo)$ is a \emph{global min-max saddle point} of $F$ if $\UU_{\xo} =
\real^n$ and $\UU_{\zo}= \real^m$.  Min-max saddle points are a
particular case of the more general notion of saddle points. We focus
here on min-max saddle points motivated by problems in constrained
optimization and zero-sum games, whose solutions correspond to min-max
saddle points. With a slight abuse of terminology, throughout the
  paper we refer to the local min-max saddle points simply as saddle
  points. We denote by $\saddleset{F}$ the set of saddle points
of~$F$. From~\eqref{eq:saddleinequality}, for $(\xo,\zo) \in
\saddleset{F}$, the point $\xo \in \real^n$ (resp.  $\zo \in \real^m$)
is a local minimizer (resp. local maximizer) of the map $x \mapsto
F(x,\zo)$ (resp. $z \mapsto F(\xo,z)$).  Each saddle point is a
critical point of $F$, i.e., $\gradient_x F(\xo,\zo) =0$ and
$\gradient_z F(\xo,\zo) =0$.  Additionally, if $F$ is $\CC^2$, then
$\gradient_{xx} F(\xo,\zo) \preceq 0$ and $\gradient_{zz} F(\xo,\zo)
\succeq 0$.  Also, if $\gradient_{xx} F(\xo,\zo) \prec 0$ and
$\gradient_{zz} F(\xo,\zo) \succ 0$, then the inequalities
in~\eqref{eq:saddleinequality} are strict.

A function $F:\real^n \times \real^m \to \real$ is \emph{locally
  convex-concave} at a point $(\xt,\zt) \in \real^n \times \real^m$ if
there exists an open neighborhood $\UU$ of $(\xt, \zt)$ such that for
all $(\bar{x}, \bar{z}) \in \UU$, the functions
$x \mapsto F(x,\bar{z})$ and $z \mapsto F(\bar{x},z)$ are convex over
$\UU \cap (\real^n \times \{\bar{z}\})$ and concave over
$\UU \cap (\{\bar{x}\} \times \real^m)$, respectively. If in addition,
either $x \mapsto F(x, \zt)$ is strictly convex in an open
neighborhood of $\xt$, or $z \mapsto F(\xt, z)$ is strictly concave in
an open neighborhood of $\zt$, then $F$ is \emph{locally strictly
  convex-concave} at $(\xt,\zt)$. $F$ is locally (resp. locally
strictly) convex-concave on a set $\SS \subset\real^n \times \real^m$
if it is so at each point in $\SS$. $F$ is \emph{globally
  convex-concave} if in the local definition
$\UU = \real^n \times \real^m$. Finally, $F$ is \emph{globally
  strictly convex-concave} if it is globally convex-concave and for
any $(\bar{x},\bar{z}) \in \real^n \times \real^m$, either
$x \mapsto F(x,\bar{z})$ is strictly convex or
$z \mapsto F(\bar{x},z)$ is strictly concave. Note that this notion is
different than saying that $F$ is \emph{both} strictly convex and
strictly concave.

Next, we define strongly quasiconvex function following~\cite{MVJ:96}.
A function $\map{f}{\real^n}{\real}$ is \emph{strongly quasiconvex}
with parameter $s>0$ over a convex set $\DD \subset \real^n$ if for
all $x,y \in \DD$ and all $\lambda \in [0,1]$ we have,
\begin{align*}
  \max\{f(x),f(y)\} - f(\lambda x + (1-\lambda) y) \ge s \lambda
  (1-\lambda) \norm{x-y}^2.
\end{align*}
A function $f$ is \emph{strongly quasiconcave} with parameter $s>0$
over the set $\DD$ if $-f$ is strongly quasiconvex with parameter $s$
over $\DD$.  A function $F:\real^n \times \real^m \to \real$ is
\emph{locally jointly strongly quasiconvex-quasiconcave} at a point
$(\xt,\zt) \in \real^n \times \real^m$ if there exist $s>0$ and an
open neighborhood $\UU$ of $(\xt, \zt)$ such that for all
$(\bar{x}, \bar{z}) \in \UU$, the function $x \mapsto F(x,\bar{z})$ is
strongly quasiconvex with parameter $s$ over
$\UU \cap (\real^n \times \{\bar{z}\})$ and the function
$z \mapsto F(\bar{x},z)$ is strongly quasiconvex with parameter $s$
over $\UU \cap (\{\bar{x}\} \times \real^m)$. $F$ is locally jointly
strongly quasiconvex-quasiconcave on a set
$\SS \subset \real^n \times \real^m$ if it is so at each point in
$\SS$. $F$ is \emph{globally jointly strongly
  quasiconvex-quasiconcave} if in the local definition
$\UU = \real^n \times \real^m$.

\section{Problem statement}\label{sec:problem}

Here we formulate the problem of interest in the paper.  Given a
continuously differentiable function $F: \real^n \times \real^m \to
\real$, which we refer to as \emph{saddle function}, we consider its
saddle-point dynamics, i.e., gradient-descent in one argument and
gradient-ascent in the other,
\begin{subequations}\label{eq:saddledynamics}
  \begin{align}
    \dot x & = - \gradient_{x} F(x,z), 
    \\
    \dot z & = \gradient_{z} F(x,z).
  \end{align}
\end{subequations}
When convenient, we use the shorthand notation $\SD: \real^n \times
\real^m \rightarrow \real^n \times \real^m$ to refer to this dynamics.
Our aim is to provide conditions on $F$ under which the trajectories
of its saddle-point dynamics~\eqref{eq:saddledynamics} locally
asymptotically converge to its set of saddle points, and possibly to a
point in the set.  We are also interested in identifying conditions to
establish global asymptotic convergence. Throughout our study, we
assume that the set $\saddleset{F}$ is nonempty. This
  assumption is valid under mild conditions in the application areas
  that motivate our study: for the Lagrangian of the constrained
  optimization problem~\cite{SB-LV:04} and the value function for
  zero-sum games~\cite{TB-GJO:82}. Our forthcoming discussion is
divided in two threads, one for the case of convex-concave functions,
cf.  Section~\ref{sec:case1}, and one for the case of general
functions, cf.  Section~\ref{sec:case2}. 
In each case, we provide illustrative examples to show the
applicability of the results.

\section{Convergence analysis for convex-concave saddle
  functions}\label{sec:case1}

This section presents conditions for the asymptotic stability of
saddle points under the saddle-point
dynamics~\eqref{eq:saddledynamics} that rely on the
convexity-concavity properties of the saddle function.

\subsection{Stability under strict
convexity-concavity}\label{subsec:cc}

Our first result provides conditions that guarantee the local
asymptotic stability of the set of saddle points.

\begin{proposition}\longthmtitle{Local asymptotic stability of the set
    of saddle points via convexity-concavity}\label{pr:localsaddlesetconv1}
  For $\map{F}{\real^n \times \real^m}{\real}$ continuously
  differentiable and locally strictly convex-concave on
  $\saddleset{F}$, each isolated path connected component of
  $\saddleset{F}$ is locally asymptotically stable under the
  saddle-point dynamics $\SD$ and, moreover, the convergence of each
  trajectory is to a point.
\end{proposition}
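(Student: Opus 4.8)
The plan is to construct a Lyapunov function based on the squared distance to a fixed isolated path connected component of the saddle set, and then use LaSalle-type invariance arguments together with the strict convexity-concavity to squeeze convergence down to a single point. Fix an isolated path connected component $\SS_c$ of $\saddleset{F}$, and let $\UU$ be the open neighborhood from the definition of isolated path connected component, so that $\UU \cap \saddleset{F} = \SS_c$. Choose any fixed reference saddle point $(\xo,\zo) \in \SS_c$ and consider the candidate
\begin{align*}
  \Lyap(x,z) = \tfrac{1}{2}\norm{x - \xo}^2 + \tfrac{1}{2}\norm{z - \zo}^2 .
\end{align*}
Its derivative along~\eqref{eq:saddledynamics} is
\begin{align*}
  \dot{\Lyap} = -\langle x - \xo, \gradient_x F(x,z)\rangle
               + \langle z - \zo, \gradient_z F(x,z)\rangle .
\end{align*}
The first step is to show $\dot{\Lyap} \le 0$ on a suitable neighborhood. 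Using that $(\xo,\zo)$ is a saddle point, so $\gradient_x F(\xo,\zo) = 0$ and $\gradient_z F(\xo,\zo) = 0$, and invoking the local convexity of $x \mapsto F(x,z)$ and concavity of $z \mapsto F(x,z)$ via the first-order (gradient monotonicity) characterization of convex functions, each inner product term is sign-definite in the right direction. Concretely, convexity in $x$ gives $\langle x - \xo, \gradient_x F(x,z) - \gradient_x F(\xo,z)\rangle \ge 0$ and concavity in $z$ gives $\langle z - \zo, \gradient_z F(x,z) - \gradient_z F(x,\zo)\rangle \le 0$; combining these with the saddle inequalities to control the cross terms $\gradient_x F(\xo,z)$ and $\gradient_z F(x,\zo)$ yields $\dot{\Lyap} \le 0$.

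The second step is to pin down where $\dot{\Lyap} = 0$. The strictness hypothesis is what forces equality to occur only on the saddle set: if, say, $x \mapsto F(x,\zo)$ is strictly convex near $\xo$, then the corresponding inequality is strict whenever $x \ne \xo$, so $\dot{\Lyap}=0$ forces the relevant variable to already coincide with its saddle value. One must be careful that strictness is assumed at each point of $\SS_c$ but only in one of the two arguments, and possibly a different argument at different points; I would argue that the largest invariant set contained in $\{\dot{\Lyap}=0\}$ inside the chosen neighborhood is contained in $\saddleset{F}$, hence in $\SS_c$ by isolation. Applying the LaSalle invariance principle on a forward-invariant sublevel set $\levelset{\Lyap}{\alpha}$ small enough to sit inside $\UU$ then gives local asymptotic stability of $\SS_c$: trajectories starting near $\SS_c$ converge to it.

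The third step upgrades set convergence to pointwise convergence, and this is where I expect the main obstacle to lie, since $\SS_c$ may be a continuum so boundedness of the trajectory alone does not single out a limit point. The idea is that the Lyapunov computation above works verbatim for \emph{every} reference point in $\SS_c$, so $t \mapsto \norm{(x(t);z(t)) - p}$ is nonincreasing for each $p \in \SS_c$; this monotonicity makes the distance to each saddle point converge. Combined with the fact, from LaSalle, that the $\omega$-limit set is a nonempty subset of $\SS_c$, I would show the $\omega$-limit set is a single point: if two distinct points $p_1,p_2 \in \SS_c$ were both $\omega$-limits, the limits $\lim_t \norm{(x(t);z(t)) - p_i}$ would both be zero, forcing $p_1 = p_2$. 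Hence the trajectory converges to a unique point of $\SS_c$, completing the proof. The delicate part throughout is ensuring all estimates are genuinely local—restricting to a sublevel set inside $\UU$ and verifying forward invariance so that the convexity-concavity and strictness properties, which hold only on a neighborhood of $\SS_c$, remain in force along the entire trajectory.
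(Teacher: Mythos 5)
Your proposal follows essentially the same route as the paper's proof: the same quadratic LaSalle function anchored at a reference saddle point $(\xo,\zo)$, the same use of first-order convexity-concavity plus the saddle inequality to get $\Lie_{\SD}V\le 0$ on a small sublevel set, LaSalle's invariance principle, strictness to collapse the invariant set, and an upgrade to pointwise convergence. Two spots deserve comment. First, the gradient-monotonicity decomposition you write down does not close on its own: it leaves the residual terms $-(x-\xo)^\top\gradient_x F(\xo,z)$ and $(z-\zo)^\top\gradient_z F(x,\zo)$, and neither the saddle inequalities (which constrain function values, not these inner products) nor convexity (which bounds them from the wrong side) makes them nonpositive. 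The working argument applies the first-order inequality with the gradient evaluated at $(x,z)$ itself, $-(x-\xo)^\top\gradient_x F(x,z)\le F(\xo,z)-F(x,z)$ and $(z-\zo)^\top\gradient_z F(x,z)\le F(x,z)-F(x,\zo)$, so that the sum telescopes to $F(\xo,z)-F(\xo,\zo)+F(\xo,\zo)-F(x,\zo)\le 0$. Second, the step you defer---that the largest invariant set in $\{\Lie_{\SD}V=0\}$ lies in $\saddleset{F}$---is where the real work sits: strictness in $x$ only yields $x=\xo$ pointwise, and to conclude $(x,z)\in\SS$ one must exploit invariance, namely $x(t)\equiv\xo$ forces $\gradient_x F=0$ while constancy of $F(\xo,z(t))$ along the invariant trajectory differentiates to $\norm{\gradient_z F(\xo,z(t))}^2=0$; local convexity-concavity and isolation of the component then finish. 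Both are repairable with the paper's arguments, so I would classify these as gaps in the write-up rather than a wrong approach.

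Your third step is the one genuinely different ingredient: instead of citing the Bhat--Bernstein result (Lemma~\ref{le:convtopoint}), you re-derive it via monotonicity of $t\mapsto\norm{(x(t);z(t))-p}$ for each $p\in\SS$ and a two-limit-point contradiction. This is valid and self-contained, provided the monotonicity is only invoked after the trajectory enters a forward-invariant sublevel set of $V_p$ contained in the neighborhood where the local estimates for $p$ hold---which is precisely the ``every point of $\EE$ is stable'' hypothesis of the cited lemma. The trade-off is a slightly longer proof in exchange for not importing an external result.
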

\begin{proof}
  Let $\SS$ be an isolated path connected component of $\saddleset{F}$
  and take $(\xo,\zo) \in \SS$. Without loss of generality, we
  consider the case when $x \mapsto F(x, \zo)$ is locally strictly
  convex (the proof for the case when $z \mapsto F(\xo, z)$ is locally
  strictly concave is analogous).  Consider the function
  $V:\real^n \times \real^m \to \realnonnegative$,
  \begin{equation}\label{eq:euclyapunov}
    V(x,z) = \frac{1}{2} \Big( \norm{x-\xo}^2 + \norm{z-\zo}^2 \Big),
  \end{equation}
  which we note is radially unbounded (and hence has bounded sublevel
  sets).  We refer to $V$ as a LaSalle function because
    locally, as we show next, its Lie derivative is negative, but not
    strictly negative. Let $\UU$ be the neighborhood of $(\xo,\zo)$
  where local convexity-concavity holds. The Lie derivative of~$V$
  along the dynamics~\eqref{eq:saddledynamics} at $(x,z) \in \UU $ can
  be written as,
  \begin{align}
    \Lie_{\SD} V (x,z) 
    & = -(x - \xo)^\top \gradient_{x} F(x,z) + (z - \zo)^\top
      \gradient_{z} F(x,z) \label{eq:liesaddledynamics} 
    \\
    &  \le F(\xo, z) - F(x,z) + F(x,z) - F(x, \zo) \notag
    \\
    & = F(\xo, z) - F(\xo, \zo) + F(\xo, \zo) - F(x, \zo) \le 0,
      \notag
  \end{align}
  where the first inequality follows from the first-order condition
  for convexity and concavity, and the last inequality follows from
  the definition of saddle point. As a consequence, for $\alpha>0$
  small enough such that $\levelset{V}{\alpha} \subset \UU$, we
  conclude that $\levelset{V}{\alpha}$ is positively invariant
  under~$\SD$.  The application of the LaSalle Invariance
  Principle~\cite[Theorem 4.4]{HKK:02} yields that any trajectory
  starting from a point in $\levelset{V}{\alpha}$ converges to the
  largest invariant set $M$ contained in $\setdef{(x, z) \in
    \levelset{V}{\alpha}}{\Lie_{\SD} V(x,z) = 0}$.  Let $(x,z) \in
  M$. From~\eqref{eq:liesaddledynamics}, $\Lie_{\SD} V(x,z) =0$
  implies that $F(\xo, z) = F(\xo, \zo)= F(x, \zo)$. In turn, the
  local strict convexity of $x \mapsto F(x, \zo)$ implies that $x =
  \xo$. Since $M$ is positively invariant, the trajectory $t \mapsto
  (x(t),z(t))$ of $\SD$ starting at $(x,z)$ is contained in~$M$. This
  implies that along the trajectory, for all $t \ge 0$, (a) $x(t) =
  \xo$ i.e., $\dot x(t) = \gradient_x F(x(t),z(t)) = 0$, and (b)
  $F(\xo,z(t)) = F(\xo,\zo)$. The later implies
  \begin{align*}
    0 = \Lie_{\SD} F(\xo, z(t)) = \SD (\xo,z(t)) \cdot (0,
      \nabla_z F(\xo, z(t))) = \norm{\gradient_z F(x(t) , z(t))}^2 ,
  \end{align*}
  for all $t \ge 0$. Thus, we get $\gradient_x F(x,z) = 0$ and
  $\gradient_z F(x,z) = 0$. Further, since $(x,z) \in \UU$, local
  convexity-concavity holds over $\UU$, and $\SS$ is an isolated
  component, we obtain $(x,z) \in \SS$, which shows $M \subset \SS$.
  Since $(\xo,\zo)$ is arbitrary, the asymptotic convergence property
  holds in a neighborhood of $\SS$. The pointwise convergence follows
  from the application of Lemma~\ref{le:convtopoint}.
\end{proof}

The result above shows that each saddle point is stable and that each
path connected component of $\saddleset{F}$ is asymptotically
stable. Note that each saddle point might not be asymptotically
stable. However, if a component consists of a single point, then that
point is asymptotically stable.  Interestingly, a close look at the
proof of Proposition~\ref{pr:localsaddlesetconv1} reveals that, if the
assumptions hold globally, then the asymptotic stability of the set of
saddle points is also global, as stated next.

\begin{corollary}\longthmtitle{Global asymptotic stability of
    the set of saddle points via
    convexity-concavity}\label{cr:globalsaddlesetconv1} 
  For $\map{F}{\real^n \times \real^m}{\real}$ continuously
  differentiable and globally strictly convex-concave, $\saddleset{F}$
  is globally asymptotically stable under the saddle-point dynamics
  $\SD$ and the convergence of trajectories is to a point.
\end{corollary}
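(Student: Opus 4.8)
The plan is to globalize the proof of Proposition~\ref{pr:localsaddlesetconv1} by verifying that every step that was argued locally in a neighborhood $\UU$ now holds on all of $\real^n \times \real^m$, since by hypothesis $F$ is globally strictly convex-concave. First I would fix any saddle point $(\xo,\zo) \in \saddleset{F}$ and reuse the same Lyapunov candidate $V(x,z) = \tfrac{1}{2}(\norm{x-\xo}^2 + \norm{z-\zo}^2)$, which is radially unbounded. The key observation is that the chain of inequalities computing $\Lie_{\SD} V$ in~\eqref{eq:liesaddledynamics} used only the first-order characterization of convexity/concavity and the saddle-point inequality~\eqref{eq:saddleinequality}; under the global hypothesis these hold for all $(x,z)$, so $\Lie_{\SD} V(x,z) \le 0$ on the entire space. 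Consequently every sublevel set $\levelset{V}{\alpha}$ is positively invariant, and since $V$ has bounded sublevel sets, all trajectories are bounded and the LaSalle Invariance Principle applies globally rather than only on a small $\levelset{V}{\alpha} \subset \UU$.

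Next I would characterize the largest invariant set $M$ in $\setdef{(x,z)}{\Lie_{\SD} V(x,z) = 0}$. Exactly as in the local argument, $\Lie_{\SD} V(x,z) = 0$ forces $F(\xo,z) = F(\xo,\zo) = F(x,\zo)$, and global strict convex-concavity (here the assumption that for each $(\bar x,\bar z)$ either $x \mapsto F(x,\bar z)$ is strictly convex or $z \mapsto F(\bar x, z)$ is strictly concave) pins down the corresponding coordinate; invariance of $M$ together with the computation $0 = \Lie_{\SD} F(\xo,z(t)) = \norm{\gradient_z F(\xo,z(t))}^2$ then yields $\gradient_x F = \gradient_z F = 0$ throughout the trajectory, so $M \subset \saddleset{F}$. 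Because the conclusion holds for trajectories starting anywhere, convergence to $\saddleset{F}$ is global, and pointwise convergence follows from Lemma~\ref{le:convtopoint} as before.

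The one point requiring slightly more care than a verbatim copy is the strictness step that collapses $M$ onto the saddle set. In the local statement the hypothesis was phrased as strict convexity (or concavity) \emph{at} the chosen saddle point, whereas the global hypothesis only guarantees, at each ambient point $(\bar x,\bar z)$, that \emph{one} of the two strictness alternatives holds, possibly varying with the point. I would therefore argue that at any $(x,z) \in M$ the equalities $F(\xo,z) = F(\xo,\zo)$ and $F(\xo,\zo) = F(x,\zo)$, combined with whichever strictness alternative is active, still force $(x,z)$ to be a critical point of $F$; the invariance of $M$ and radial unboundedness of $V$ guarantee the trajectory stays in a region where this reasoning is valid globally. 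This is the main obstacle, though a minor one: it is essentially a bookkeeping check that the global definition of strict convex-concavity is strong enough to reproduce the local conclusion uniformly over the space, after which global asymptotic stability follows by combining positive invariance of all sublevel sets, radial unboundedness, and the LaSalle argument.
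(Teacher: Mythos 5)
Your proposal is correct and matches the paper's intent exactly: the paper gives no separate proof for this corollary, stating only that it follows from a close look at the proof of Proposition~\ref{pr:localsaddlesetconv1} with the local neighborhood $\UU$ replaced by all of $\real^n\times\real^m$. Your worry about the strictness alternative varying over the space is unnecessary --- one only needs the alternative at the fixed saddle point $(\xo,\zo)$ used to define $V$, which the global definition supplies directly --- but your resolution is sound, so there is no gap.
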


\begin{remark}\longthmtitle{Relationship with results on primal-dual
    dynamics: I}\label{re:strictlyconvex}
  {\rm Corollary~\ref{cr:globalsaddlesetconv1} is an extension to more
    general functions and less stringent assumptions of the results
    stated for Lagrangian functions of constrained convex (or concave)
    optimization problems in~\cite{JW-NE:11,KA-LH-HU:58,DF-FP:10} and
    cost functions of differential games in~\cite{LJR-SAB-SSS:13}.
    In~\cite{KA-LH-HU:58,DF-FP:10}, for a concave optimization, the
    matrix $\gradient_{xx} F$ is assumed to be negative definite at
    every saddle point and in~\cite{JW-NE:11} the set $\saddleset{F}$
    is assumed to be a singleton.  The work~\cite{LJR-SAB-SSS:13}
    assumes a sufficient condition on the cost functions to guarantee
    convergence that in the current setup is equivalent to having
    $\gradient_{xx} F$ and $\gradient_{zz} F$ positive and negative
    definite, respectively.}  \oprocend
\end{remark}

\subsection{Stability under convexity-linearity or
linearity-concavity}\label{subsec:linearity}

Here we study the asymptotic convergence properties of the
saddle-point dynamics when the convexity-concavity of the saddle
function is not strict but, instead, the function depends linearly on
its second argument. The analysis follows analogously for saddle
functions that are linear in the first argument and concave in the
other.  The consideration of this class of functions is motivated by
equality constrained optimization problems.

\begin{proposition}\longthmtitle{Local asymptotic stability of the set
    of saddle points via
    convexity-linearity}\label{pr:localsaddlesetconv2}
  For a continuously differentiable function $\map{F}{\real^n \times
    \real^m}{\real}$, if
  \begin{enumerate}
  \item $F$ is locally convex-concave on $\saddleset{F}$ and linear in
    $z$,
    \label{as:2ls2}
  \item for each $(\xo,\zo) \in \saddleset{F}$, there exists a
    neighborhood $\UU_{\xo} \subset \real^n$ of $\xo$ where, if
    $F(x,\zo) = F(\xo,\zo)$ with $x \in \UU_{\xo}$, then
    $(x,\zo) \in \saddleset{F}$,
    \label{as:2ls3}
  \end{enumerate}
  then each isolated path connected component of $\saddleset{F}$ is
  locally asymptotically stable under the saddle-point dynamics $\SD$
  and, moreover, the convergence of trajectories is to a point.
\end{proposition}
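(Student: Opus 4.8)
The plan is to run the same LaSalle-function argument as in the proof of Proposition~\ref{pr:localsaddlesetconv1}, replacing the role played there by strict convexity with the combination of the level-set hypothesis~\ref{as:2ls3} and the linearity of $F$ in $z$. Fix an isolated path connected component $\SS$ of $\saddleset{F}$ and a point $(\xo,\zo)\in\SS$, and take the LaSalle function $V(x,z)=\frac12(\norm{x-\xo}^2+\norm{z-\zo}^2)$. Since condition~\ref{as:2ls2} provides local convexity-concavity on a neighborhood $\UU$ of $\SS$, the first-order conditions for convexity and concavity give, exactly as in~\eqref{eq:liesaddledynamics}, the bound $\Lie_{\SD}V(x,z)\le F(\xo,z)-F(\xo,\zo)+F(\xo,\zo)-F(x,\zo)\le 0$ for $(x,z)\in\UU$, where the last step uses the saddle-point inequalities. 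I would then pick $\alpha>0$ small enough that $\levelset{V}{\alpha}\subset\UU$, that the $x$-components of points in $\levelset{V}{\alpha}$ lie in the neighborhood $\UU_{\xo}$ of~\ref{as:2ls3}, and that $\levelset{V}{\alpha}$ meets $\saddleset{F}$ only in $\SS$ (possible since $\SS$ is isolated). Consequently $\levelset{V}{\alpha}$ is positively invariant and the LaSalle Invariance Principle~\cite[Theorem 4.4]{HKK:02} applies.

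The heart of the proof is the analysis of the largest invariant set $M$ contained in $\setdef{(x,z)\in\levelset{V}{\alpha}}{\Lie_{\SD}V(x,z)=0}$. For $(x,z)\in M$, equality in the chain above forces $F(x,\zo)=F(\xo,\zo)$; since $x\in\UU_{\xo}$, hypothesis~\ref{as:2ls3} yields $(x,\zo)\in\saddleset{F}$, so that $\gradient_z F(x,\zo)=0$. This is exactly where linearity enters: because $F$ is linear in $z$, its $z$-gradient does not depend on $z$, whence $\gradient_z F(x,z)=\gradient_z F(x,\zo)=0$ and the map $z'\mapsto F(x,z')$ is constant, equal to $F(x,\zo)=F(\xo,\zo)$. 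Thus $\gradient_z F$ vanishes identically on $M$ and $F\equiv F(\xo,\zo)$ on $M$.

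It remains to kill the $x$-gradient on $M$. Since $F$ is constant on the invariant set $M$, its Lie derivative vanishes there, and $\Lie_{\SD}F=-\norm{\gradient_x F}^2+\norm{\gradient_z F}^2$; having just shown $\gradient_z F=0$ on $M$, I conclude $\gradient_x F=0$ on $M$ as well. Every point of $M$ is therefore a critical point of $F$ lying in the convex-concave neighborhood $\UU$. Because $x\mapsto F(x,z)$ is locally convex with zero gradient and $z\mapsto F(x,z)$ is locally concave with zero gradient, such a critical point is a local min-max saddle point, so $M\subset\saddleset{F}$; combined with $M\subset\levelset{V}{\alpha}$ and the choice of $\alpha$ this gives $M\subset\SS$. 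As $(\xo,\zo)\in\SS$ was arbitrary, $\SS$ is locally asymptotically stable, and pointwise convergence follows from Lemma~\ref{le:convtopoint}.

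The main obstacle is the middle step. Without strict convexity one can no longer collapse $M$ onto the single $x$-value $\xo$, so the proof must instead certify that $M$ consists of genuine saddle points. The subtlety is that~\ref{as:2ls3} only controls $F$ along the slice $z=\zo$; it is precisely the linearity in $z$ that propagates the vanishing of $\gradient_z F$ from $\zo$ to all $z$ and renders $F$ constant on $M$, which in turn forces $\gradient_x F=0$. Some care is also needed in the bookkeeping of $\alpha$ so that the three neighborhood requirements (local convexity-concavity, the domain of~\ref{as:2ls3}, and isolation of $\SS$) hold simultaneously.
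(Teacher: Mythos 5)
Your proof is correct, and while the setup (the LaSalle function $V$, the bound on $\Lie_{\SD}V$, and the use of hypothesis~(ii) plus linearity to get $\gradient_z F\equiv 0$ on $M$) coincides with the paper's, your treatment of the final step --- killing $\gradient_x F$ on $M$ --- is genuinely different and cleaner. The paper instead fixes $(x,z)\in M$, notes that the trajectory through it has $z(t)\equiv z$, views $\dot x=-\gradient_x F(x(t),z)$ as the gradient flow of the locally convex map $y\mapsto F(y,z)$, argues that $x(t)$ converges to a minimizer $x'$ with $(x',z)\in\SS$, invokes Lemma~\ref{le:Fconstant} to identify $F(x',z)=F(\xo,\zo)$, and then uses the monotone nonincrease of $t\mapsto F(x(t),z)$ together with the upper bound $F(x(t),z)\le F(\xo,\zo)$ to force $F(x(t),z)\equiv F(\xo,\zo)$ and hence $\gradient_x F\equiv 0$ along the trajectory. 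You shortcut all of this: since $\gradient_z F(x,\cdot)\equiv 0$ and $F$ is linear in $z$, the map $z'\mapsto F(x,z')$ is constant, so $F\equiv F(\xo,\zo)$ on $M$; invariance of $M$ then gives $0=\Lie_{\SD}F=-\norm{\gradient_x F}^2+\norm{\gradient_z F}^2=-\norm{\gradient_x F}^2$ on $M$. This avoids both the convergence claim for the restricted gradient flow and the appeal to Lemma~\ref{le:Fconstant}, and your concluding step (a critical point in the convex--concave neighborhood $\UU$ is a saddle point, hence lies in $\SS$ by isolation) is exactly the closing argument the paper already uses in Proposition~\ref{pr:localsaddlesetconv1}. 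Your explicit bookkeeping of $\alpha$ so that the $x$-components of $\levelset{V}{\alpha}$ lie in $\UU_{\xo}$ is a point the paper leaves implicit and is worth keeping.
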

\begin{proof}
  Given an isolated path connected component $\SS$ of $\saddleset{F}$,
  Lemma~\ref{le:Fconstant} implies that $F_{|\SS}$ is constant.  Our
  proof proceeds along similar lines as those of
  Proposition~\ref{pr:localsaddlesetconv1}.  With the same notation,
  given $(\xo,\zo) \in \SS$, the arguments follow verbatim until the
  identification of the largest invariant set $M$ contained in
  $\setdef{(x,z) \in \levelset{V}{\alpha}}{\Lie_{\SD} V(x,z) =0}$.
  Let $(x,z) \in M$.  From~\eqref{eq:liesaddledynamics}, $\Lie_{\SD}
  V(x,z) = 0$ implies $F(\xo,z) = F(\xo,\zo) = F(x,\zo)$.  By
  assumption~\ref{as:2ls3}, this means $(x,\zo) \in \SS$, and by
  assumption~\ref{as:2ls2}, the linearity property gives $\gradient_z
  F(x,z) = \gradient_z F(x,\zo) = 0$. Therefore $\gradient_z F_{|M} =
  0$. For $(x,z) \in M$, the trajectory $t \mapsto (x(t),z(t))$ of
  $\SD$ starting at $(x,z)$ is contained in~$M$. Consequently, $z(t) =
  z$ for all $t \in [0,\infty)$ and $\dot x(t) = -\gradient_x
  F(x(t),z)$ corresponds to the gradient dynamics of the (locally)
  convex function $y \mapsto F(y,z)$.  Therefore, $x(t)$ converges to
  a minimizer $x'$ of this function, i.e., $\nabla_x F(x',z) =
  0$. Since $\gradient_z F_{|M} = 0$, the continuity of $\gradient_z
  F$ implies that $\nabla_z F(x',z) = 0$, and hence $(x',z) \in
  \SS$. By continuity of $F$, it follows that $F(x(t),z) \to F(x',z) =
  F(x_*,z_*)$, where for the equality we use the fact that $F_{|\SS}$
  is constant. On the other hand, note that $0 = \Lie_{\SD} V(x(t),z)
  = -(x(t)-\xo)^\top \gradient_x F(x(t),z) \le F(\xo,z) - F(x(t),z)$
  implies
  \begin{equation*}
    F(x(t),z) \le F(\xo,z) = F(\xo,\zo) ,
  \end{equation*}
  for all $ t \in [0,\infty)$. Therefore, the monotonically
  nonincreasing sequence $\{F(x(t),z)\}$ converges to $ F(\xo,\zo)$,
  which is also an upper bound on the whole sequence. This can only be
  possible if $F(x(t),z) = F(\xo,\zo)$ for all $t \in [0,\infty)$.
  This further implies $\gradient_x F(x(t),z) = 0$ for all
  $t \in [0,\infty)$, and hence, $(x,z) \in \SS$. Consequently,
  $M \subset \SS$.  Since $(\xo,\zo)$ has been chosen arbitrarily, the
  convergence property holds in a neighborhood of $\SS$.  The
  pointwise convergence follows now from the application of
  Lemma~\ref{le:convtopoint}.
\end{proof}

The assumption (ii) in the above result is a generalization of
  the local strict convexity condition for the function
  $F(\cdot,\zo)$. That is, (ii) allows other points in the
  neighborhood of $\xo$ to have the same value of the function
  $F(\cdot,\zo)$ as that at $\xo$, as long as they are saddle points
  (whereas, under local strict convexity, $\xo$ is the local unique
  minimizer of $F(\cdot,\zo)$). The next result extends the
conclusions of Proposition~\ref{pr:localsaddlesetconv2} globally when
the assumptions hold globally.

\begin{corollary}\longthmtitle{Global asymptotic stability of the set
    of saddle points via
    convexity-linearity}\label{cr:globalsaddlesetconv2}
  For a $\CC^1$ function $\map{F}{\real^n \times \real^m}{\real}$, if
  \begin{enumerate}
  \item $F$ is globally convex-concave and linear in $z$,
    \label{as:2gs2}
  \item for each $(\xo,\zo) \in \saddleset{F}$, if
    $F(x,\zo) = F(\xo,\zo)$, then $(x,\zo) \in \saddleset{F}$,
    \label{as:2gs3}
  \end{enumerate}
  then $\saddleset{F}$ is globally asymptotically stable under the
  saddle-point dynamics $\SD$ and, moreover, convergence of
  trajectories is to a point.
\end{corollary}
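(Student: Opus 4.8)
The plan is to run the global analogue of the proof of Proposition~\ref{pr:localsaddlesetconv2}, upgrading the local neighborhoods to the whole space and extracting stability of the set from radial unboundedness. First I would fix an arbitrary $(\xo,\zo) \in \saddleset{F}$ and reuse the quadratic candidate $V$ from~\eqref{eq:euclyapunov}. Since, by assumption~\ref{as:2gs2}, $F$ is globally convex-concave, the chain of inequalities in~\eqref{eq:liesaddledynamics} now holds at every $(x,z) \in \real^n \times \real^m$, giving $\Lie_{\SD} V(x,z) \le 0$ globally. Because $V$ is radially unbounded, every sublevel set $\levelset{V}{\alpha}$ is compact and positively invariant, so every trajectory is bounded, and the LaSalle Invariance Principle gives convergence to the largest invariant set $M$ contained in $\setdef{(x,z)}{\Lie_{\SD} V(x,z) = 0}$.

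Next I would characterize $M$ exactly as in the local argument. On $M$, the identity $\Lie_{\SD} V = 0$ forces $F(\xo,z) = F(\xo,\zo) = F(x,\zo)$; assumption~\ref{as:2gs3} then yields $(x,\zo) \in \saddleset{F}$, and the linearity in $z$ from assumption~\ref{as:2gs2} upgrades $\gradient_z F(x,\zo)=0$ to $\gradient_z F(x,z) = 0$, so $\gradient_z F_{|M} = 0$. Consequently, along any trajectory contained in $M$ the variable $z$ is frozen and $\dot x = -\gradient_x F(x,z)$ is the gradient flow of the convex map $y \mapsto F(y,z)$, which converges to a minimizer $x'$; together with $\gradient_z F(x',z)=0$ this gives $(x',z) \in \saddleset{F}$. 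The same monotonicity argument as before — $F(x(t),z)$ is nonincreasing, bounded above by $F(\xo,\zo)$, and converging to $F(x',z) = F(\xo,\zo)$ — then forces $F(x(t),z) \equiv F(\xo,\zo)$, whence $\gradient_x F(x(t),z) \equiv 0$ and $(x,z)\in\saddleset{F}$. This proves $M \subset \saddleset{F}$, so every trajectory converges to $\saddleset{F}$, establishing global attractivity; pointwise convergence then follows from Lemma~\ref{le:convtopoint}.

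It remains to upgrade attractivity to Lyapunov stability of the whole set. The key observation is that the center $(\xo,\zo)$ of $V$ was arbitrary, and $\Lie_{\SD} V \le 0$ holds globally for \emph{every} such center. Given $\epsilon > 0$ and an initial condition $(x_0,z_0)$ within distance $\epsilon$ of $\saddleset{F}$, I would take the center to be a nearest saddle point $(\xo,\zo) \in \proj_{\saddleset{F}}(x_0,z_0)$; then $V(x_0,z_0) < \tfrac12 \epsilon^2$ and, since $V$ is nonincreasing along $\SD$, the trajectory stays in $\setdef{(x,z)}{\|(x,z)-(\xo,\zo)\| < \epsilon}$, hence within distance $\epsilon$ of $\saddleset{F}$. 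Choosing $\delta = \epsilon$ therefore certifies stability, and combining it with global attractivity yields global asymptotic stability.

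The step I expect to be the main obstacle is the monotonicity argument characterizing $M$: it needs $F$ to take the common value $F(\xo,\zo)$ at the a priori different saddle point $(x',z)$, i.e. $F(x',z)=F(\xo,\zo)$. I would secure this by invoking Lemma~\ref{le:Fconstant} together with the fact that, for a globally convex-concave $F$, the saddle set has the classical product-of-convex-sets structure and is thus connected, so $F_{|\saddleset{F}}$ is constant. A secondary point to check is that the gradient flow of the merely (not strictly) convex map $y \mapsto F(y,z)$ does converge to some minimizer: this is fine since the trajectory is confined to the compact set $\levelset{V}{\alpha}$ and we only need convergence to \emph{a} minimizer $x'$, which then lies in $\saddleset{F}$.
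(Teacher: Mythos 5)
Your proof is correct and follows essentially the same route as the paper: the paper gives no separate argument for this corollary, stating only that the proof of Proposition~\ref{pr:localsaddlesetconv2} globalizes when its hypotheses hold globally, which is precisely what you carry out (radially unbounded $V$, global LaSalle, the same characterization of the invariant set $M$, and Lemma~\ref{le:convtopoint} for pointwise convergence). Your added justifications --- that $F$ is constant on $\saddleset{F}$ because the global saddle set of a convex-concave function is a connected product of convex sets, and the explicit Lyapunov-stability check for the set using an arbitrary center $(\xo,\zo)$ --- are details the paper leaves implicit, and both are handled correctly.
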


\begin{example}\longthmtitle{Saddle-point dynamics for convex
    optimization}\label{ex:convexconcave}
  {\rm Consider the following convex optimization problem
    on~$\real^3$,
    \begin{subequations}\label{eq:convexconopt}
      \begin{align}
        \mathrm{minimize} & \quad  (x_1 + x_2 + x_3)^2,
        \\
        \text{subject to} & \quad x_1 = x_2.
      \end{align}
    \end{subequations}
    The set of solutions of this optimization is
    $\setdef{x \in \real^3}{2x_1 + x_3 = 0, x_2 = x_1}$, with
    Lagrangian
    \begin{align}\label{eq:augmented}
      L(x,z) = (x_1+x_2+x_3)^2 + z(x_1 - x_2) ,
    \end{align}
    where $z \in \real$ is the Lagrange multiplier. The set of saddle
    points of $L$ (which correspond to the set of primal-dual
    solutions to~\eqref{eq:convexconopt}) are $\saddleset{L} =
    \setdef{(x,z) \in \real^3 \times \real}{2x_1 + x_3 = 0, x_1 = x_2,
      \text{ and } z=0}$.  However, $L$ is not strictly convex-concave
    and hence, it does not satisfy the hypotheses of
    Corollary~\ref{cr:globalsaddlesetconv1}.  While $L$ is globally
    convex-concave and linear in $z$, it does not satisfy assumption
    (ii) of Corollary~\ref{cr:globalsaddlesetconv2}. Therefore, to
    identify a dynamics that renders $\saddleset{L}$ asymptotically
    stable, we form the augmented Lagrangian
    \begin{equation}\label{eq:Ltilde}
      \tilde{L}(x,z) = L(x,z) + (x_1 - x_2)^2 ,
    \end{equation} 
    that has the same set of saddle points as $L$. Note that
    $\tilde{L}$ is not strictly convex-concave but it is globally
    convex-concave (this can be seen by computing its Hessian) and is
    linear in $z$. Moreover, given any $(\xo,\zo) \in \saddleset{L}$,
    we have $\tilde{L}(\xo,\zo) = 0$, and if $\tilde{L}(x,\zo) =
    \tilde{L}(\xo,\zo) = 0$, then $(x,\zo) \in \saddleset{L}$.  By
    Corollary~\ref{cr:globalsaddlesetconv2}, the trajectories of the
    saddle-point dynamics of $\tilde{L}$ converge to a point in $\SS$
    and hence, solve the optimization problem~\eqref{eq:convexconopt}.
    Figure~\ref{fig:convex-opt-example} illustrates this fact.
    Note that the point of convergence depends on the
    initial condition. }
  \oprocend
  \begin{figure}[htb!]
    \centering
    \subfigure[$(x,z)$]{\includegraphics[width=.44\linewidth]{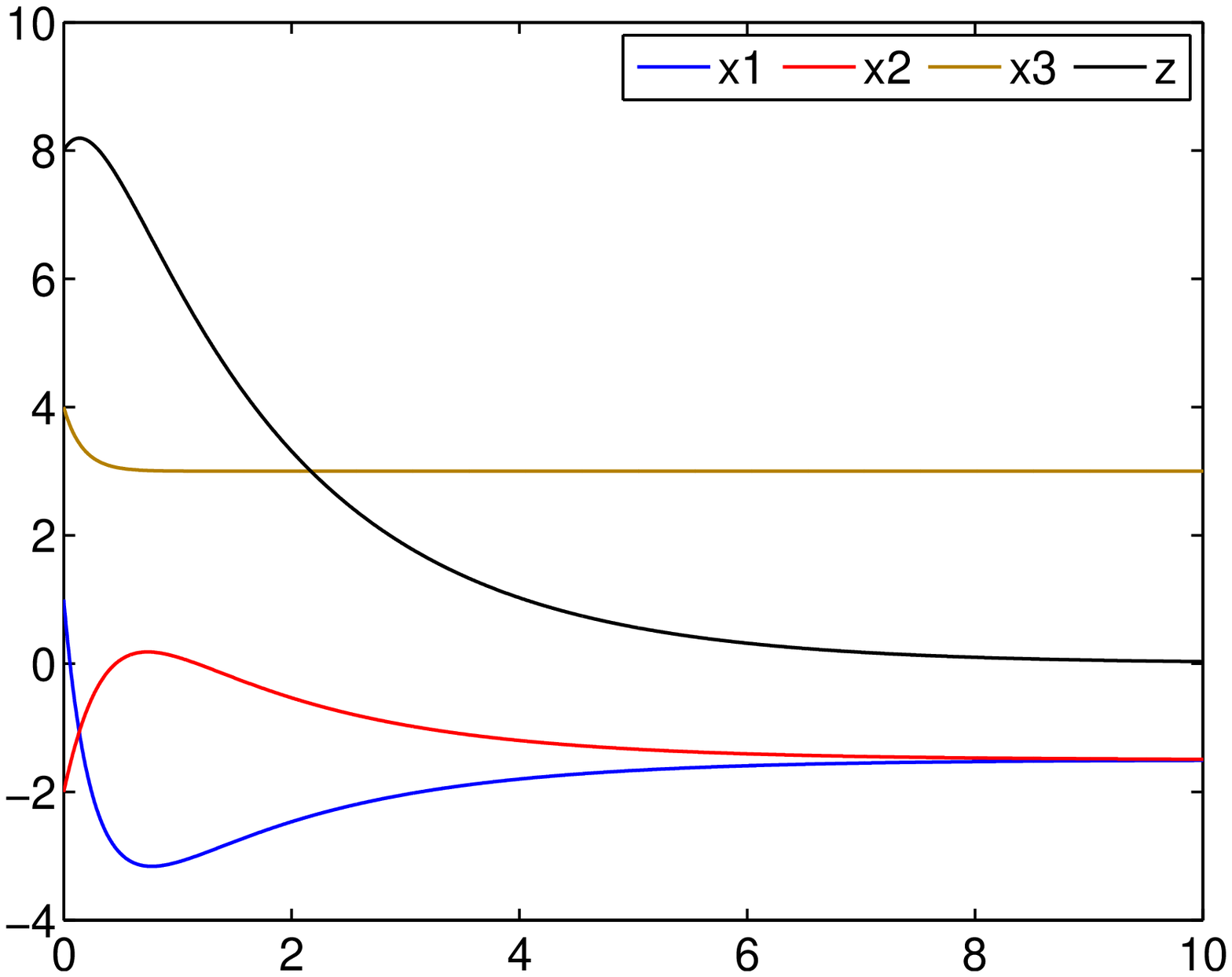}}
    \quad
    \subfigure[$(x_1+x_2+x_3)^2$]{\includegraphics[width=.44\linewidth]{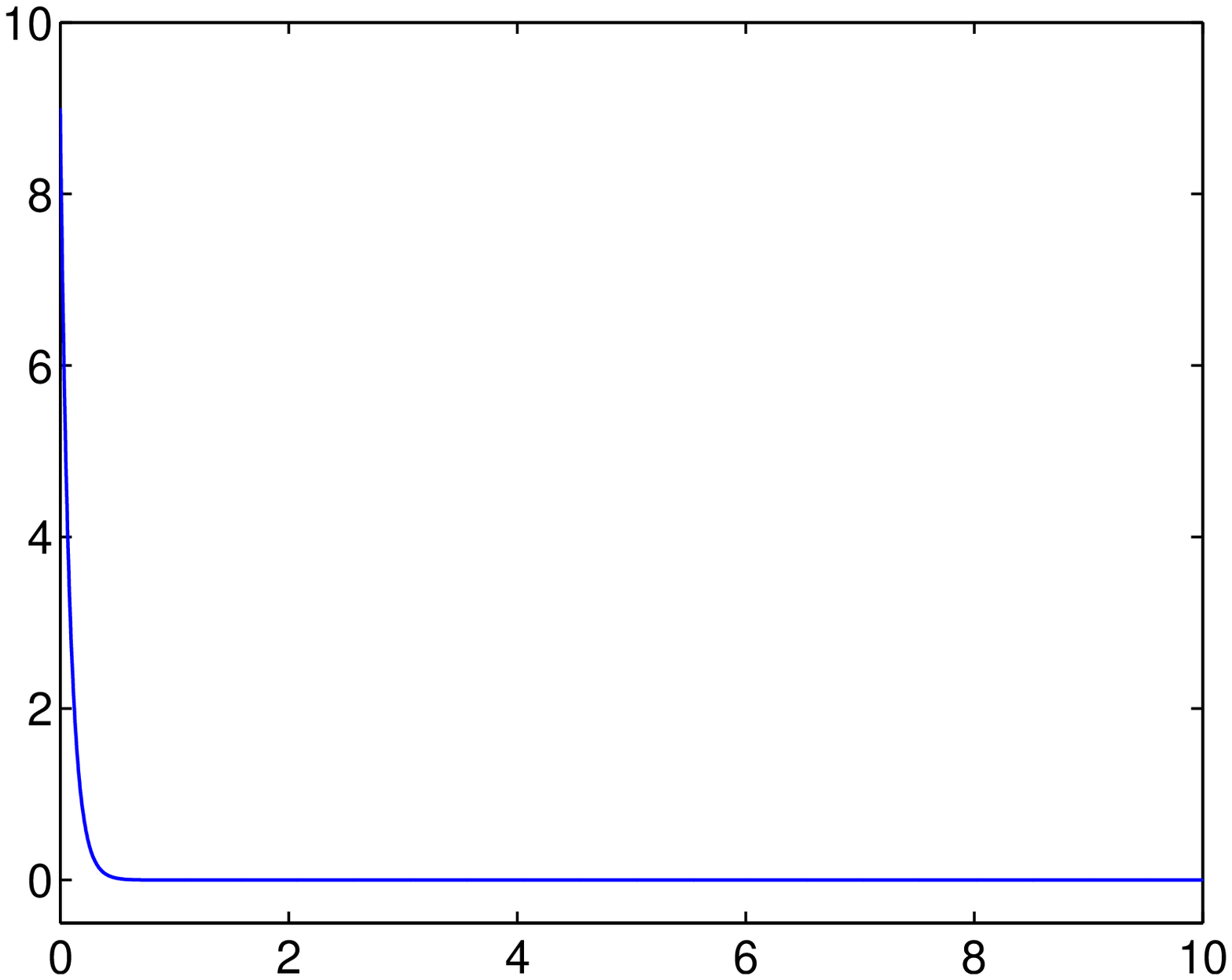}}
    \caption{(a) Trajectory of the saddle-point dynamics of the
      augmented Lagrangian $\tilde{L}$ in~\eqref{eq:Ltilde} for the
      optimization problem~\eqref{eq:convexconopt}. The initial
      condition is $(x,z) = (1,-2,4,8)$. The trajectory converges to
      $(-1.5,-1.5,3,0) \in \saddleset{L}$. (b) Evolution of the
      objective function of the optimization~\eqref{eq:convexconopt}
      along the trajectory.  The value converges to the
      minimum,~$0$.}\label{fig:convex-opt-example}
    \vspace*{-1ex}
  \end{figure}
\end{example}

\begin{remark}\longthmtitle{Relationship with results on primal-dual
    dynamics: II}\label{re:nonstrict}
  {\rm The work~\cite[Section 4]{DF-FP:10} considers concave
    optimization problems under inequality constraints where the
    objective function is not strictly concave but analyzes the
    convergence properties of a different dynamics. Specifically, the
    paper studies a discontinuous dynamics based on the saddle-point
    information of an augmented Lagrangian combined with a projection
    operator that restricts the dual variables to the nonnegative
    orthant.  We have verified that, for the formulation of the
    concave optimization problem in~\cite{DF-FP:10} but with equality
    constraints, the augmented Lagrangian satisfies the hypotheses of
    Corollary~\ref{cr:globalsaddlesetconv2}, implying that the
    dynamics~$\SD$ renders the primal-dual optima of the problem
    asymptotically stable. } \oprocend
\end{remark}

\subsection{Stability under strong
  quasiconvexity-quasiconcavity}\label{case-quasi}

Motivated by the aim of further relaxing the conditions for asymptotic
convergence, we conclude this section by weakening the
convexity-concavity requirement on the saddle function. The next
result shows that strong quasiconvexity-quasiconcavity is sufficient
to ensure convergence of the saddle-point dynamics.

\begin{proposition}\longthmtitle{Local asymptotic stability of the set
    of saddle points via strong
    quasiconvexity-quasiconcavity}\label{pr:quasi}
  Let $\map{F}{\real^n \times \real^m}{\real}$ be $\CC^2$ and the map
  $(x,z) \mapsto \gradient_{xz} F(x,z)$ be locally Lipschitz.  Assume
  that $F$ is locally jointly strongly quasiconvex-quasiconcave on
  $\saddleset{F}$. Then, each isolated path connected component of
  $\saddleset{F}$ is locally asymptotically stable under the
  saddle-point dynamics $\SD$ and, moreover, the convergence of
  trajectories is to a point.  Further, if $F$ is globally jointly
  strongly quasiconvex-quasiconcave and $\gradient_{xz} F$ is constant
  over $\real^n \times \real^m$, then $\saddleset{F}$ is globally
  asymptotically stable under $\SD$ and the convergence of trajectories
  is to a point.
\end{proposition}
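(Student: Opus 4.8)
The plan is to follow the Lyapunov approach of Proposition~\ref{pr:localsaddlesetconv1}, anchoring at a saddle point $(\xo,\zo)$ the radially unbounded function $V(x,z)=\tfrac12(\norm{x-\xo}^2+\norm{z-\zo}^2)$, and to replace the first-order convexity inequality used there by a first-order characterization of strong quasiconvexity. Concretely, if $g$ is $\CC^1$ and strongly quasiconvex with parameter $s$ over a convex neighborhood with minimizer $x^\sharp$, then differentiating the defining inequality along the segment from $x^\sharp$ to $x$ at the endpoint $x$ gives $\gradient g(x)^\top(x-x^\sharp)\ge s\norm{x-x^\sharp}^2$, and the analogous bound $\gradient h(z)^\top(z-z^\sharp)\le -s\norm{z-z^\sharp}^2$ holds for strongly quasiconcave $h$ with maximizer $z^\sharp$. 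Since $(\xo,\zo)\in\saddleset{F}$, the point $\xo$ is the (unique, local) minimizer of $F(\cdot,\zo)$ and $\zo$ the maximizer of $F(\xo,\cdot)$, so these bounds apply on the corresponding slices with $x^\sharp=\xo$ and $z^\sharp=\zo$.

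First I would rewrite the Lie derivative $\Lie_{\SD}V(x,z)=-(x-\xo)^\top\gradient_xF(x,z)+(z-\zo)^\top\gradient_zF(x,z)$ so that each gradient is evaluated on the slice through the saddle point. Using the fundamental theorem of calculus in the decoupled variable,
\begin{align*}
  \gradient_xF(x,z)&=\gradient_xF(x,\zo)+\Big(\textstyle\int_0^1\gradient_{xz}F(x,\zo+t(z-\zo))\,dt\Big)(z-\zo),\\
  \gradient_zF(x,z)&=\gradient_zF(\xo,z)+\Big(\textstyle\int_0^1\gradient_{xz}F(\xo+t(x-\xo),z)\,dt\Big)^{\!\top}(x-\xo).
\end{align*}
Substituting, the decoupled terms are $-(x-\xo)^\top\gradient_xF(x,\zo)+(z-\zo)^\top\gradient_zF(\xo,z)$, which by the first-order bounds above are at most $-s\norm{x-\xo}^2-s\norm{z-\zo}^2=-2sV$, while the mixed terms collect into $(x-\xo)^\top(\bar M_2-\bar M_1)(z-\zo)$, where $\bar M_1=\int_0^1\gradient_{xz}F(x,\zo+t(z-\zo))\,dt$ and $\bar M_2=\int_0^1\gradient_{xz}F(\xo+t(x-\xo),z)\,dt$ are the two averaged mixed Jacobians.

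It then remains to estimate the mixed term. For the local statement, local Lipschitzness of $\gradient_{xz}F$ yields $\norm{\bar M_2-\bar M_1}=O(\norm{(x,z)-(\xo,\zo)})$, so the mixed term is cubic and is dominated by $-2sV$ on a small enough neighborhood; hence $\Lie_{\SD}V(x,z)<0$ for $(x,z)$ near but distinct from $(\xo,\zo)$. This makes $\levelset{V}{\alpha}$ positively invariant for small $\alpha$ and shows that $(\xo,\zo)$ is the only equilibrium of $\SD$ in the neighborhood, hence an isolated (single-point) component of $\saddleset{F}$ that is locally asymptotically stable; convergence to a point is then immediate and also follows from Lemma~\ref{le:convtopoint}. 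For the global statement, constancy of $\gradient_{xz}F$ forces $\bar M_1=\bar M_2=\gradient_{xz}F$, so the mixed term vanishes identically and $\Lie_{\SD}V\le-2sV$ on all of $\real^n\times\real^m$; radial unboundedness of $V$ then yields global asymptotic stability, and the strict bound away from $(\xo,\zo)$ identifies $(\xo,\zo)$ as the unique saddle point, so convergence is to that point.

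The main obstacle is precisely the control of the mixed term: unlike the convex-concave setting, the first-order inequality of strong quasiconvexity only compares a point to the minimizer of its own slice, so it cannot be telescoped across the two variables as in Proposition~\ref{pr:localsaddlesetconv1}, and the cross coupling between $x$ and $z$ must be handled separately. This is exactly where the two regularity hypotheses enter -- local Lipschitzness of $\gradient_{xz}F$ makes the residual higher order locally, while its constancy makes it cancel globally -- and it explains why global strong quasiconvexity-quasiconcavity, which away from critical points need not entail any convexity, does not reduce to Corollary~\ref{cr:globalsaddlesetconv1}.
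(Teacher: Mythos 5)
Your proposal is correct and follows essentially the same route as the paper's proof: the same quadratic Lyapunov function anchored at a saddle point, the same splitting of $\Lie_{\SD}V$ into slice terms (handled via the first-order inequality for strong quasiconvexity, Lemma~\ref{le:first-quasi}) plus a cross term written as an averaged difference of mixed Hessians, the same Lipschitz estimate showing the cross term is cubic and hence dominated locally, and the same cancellation of the cross term under constant $\gradient_{xz}F$ for the global claim. The only (harmless) additions are your explicit observations that the strict decrease forces each saddle point to be isolated and that convergence to a point is then immediate.
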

\begin{proof}
  Let $(\xo,\zo) \in \SS$, where $\SS$ is an isolated path connected
  component of $\saddleset{F}$, and consider the function
  $\map{V}{\real^n \times \real^m}{\realnonnegative}$ defined
  in~\eqref{eq:euclyapunov}.
  Let $\UU$ be the neighborhood of $(\xo,\zo)$ where the local joint strong
  quasiconvexity-quasiconcavity holds.   The Lie derivative of $V$ along 
  the saddle-point dynamics at $(x,z) \in \UU$ can be written as,
  \begin{align}
    \Lie_{\SD} V (x,z) & = -(x - \xo)^\top \gradient_{x} F(x,z) + (z -
    \zo)^\top \gradient_{z} F(x,z), \notag
    \\
    & = -(x - \xo)^\top \gradient_{x} F(x,\zo) + (z - \zo)^\top
    \gradient_{z} F(\xo,z) + M_1 + M_2, \label{eq:sq-lie}
  \end{align}
  where 
  \begin{align*}
    M_1 &= - (x-\xo)^\top (\gradient_x F(x,z) - \gradient_x
    F(x,\zo)),  \\
    M_2 &= (z-\zo)^\top (\gradient_z F(x,z) - \gradient_z F(\xo,z)).
  \end{align*}
  Writing
  \begin{align*}
    \gradient_x F(x,z) - \gradient_x F(x,\zo) & = \int_0^1
    \gradient_{zx} F(x,\zo+t(z-\zo)) (z-\zo) dt,
    \\
    \gradient_z F(x,z) - \gradient_z F(\xo,z) & = \int_0^1
    \gradient_{xz} F(\xo + t(x-\xo),z) (x-\xo) dt,
  \end{align*}
  we get
  \begin{align}
    M_1 + M_2 & =(z-\zo)^\top \Big( \int_0^1 \big( \gradient_{xz}
    F(\xo+t(x-\xo), z) \notag
    \\
    & \qquad \qquad \qquad \qquad \qquad - \gradient_{xz}
    F(x,\zo+t(z-\zo))\big) dt \Big) (x-\xo) \notag
    \\
    & \le \norm{z-\zo}(L \norm{x-\xo} + L \norm{z-\zo})
    \norm{x-\xo}, \label{eq:mineq}
  \end{align}
  where in the inequality, we have used the fact that
  $\gradient_{xz}F$ is locally Lipschitz with some constant $L >0$.
  From the first-order property of a strong quasiconvex function,
  cf. Lemma~\ref{le:first-quasi}, there exist constants $s_1,s_2>0$
  such that
  \begin{subequations}\label{eq:sq-bound}
    \begin{align}
      -(x - \xo)^\top \gradient_{x} F(x,\zo) \le -s_1 \norm{x-\xo}^2,
      \\
      (z - \zo)^\top \gradient_{z} F(\xo,z) \le -s_2 \norm{z-\zo}^2,
    \end{align}
  \end{subequations}
  for all $(x,z) \in \UU$.  Substituting~\eqref{eq:mineq}
  and~\eqref{eq:sq-bound} into the expression for the Lie
  derivative~\eqref{eq:sq-lie}, we obtain
  \begin{align*}
    \Lie_{\SD} V (x,z) \le -s_1 \norm{x-\xo}^2 -s_2 \norm{z-\zo}^2 + L
    \norm{x-\xo}^2 \norm{z-\zo} + L \norm{x-\xo} \norm{z-\zo}^2.
  \end{align*}
  To conclude the proof, note that if $\norm{z-\zo} < \frac{s_1}{L}$
  and $\norm{x-\xo} < \frac{s_2}{L}$, then $\Lie_{\SD} V (x,z) < 0$,
  which implies local asymptotic stability. The pointwise convergence 
  follows from Lemma~\ref{le:convtopoint}. The global asymptotic
  stability can be reasoned using similar arguments as above using the
  fact that here $M_1 + M_2 = 0$ because $\gradient_{xz} F$ is constant.
\end{proof}

In the following, we present an example where the above result
  is employed to explain local asymptotic convergence. In this case,
  none of the results from Section~\ref{subsec:cc}
  and~\ref{subsec:linearity} apply, thereby justifying the importance
  of the above result.
  \begin{example}\longthmtitle{Convergence for locally jointly
      strongly quasiconvex-quasiconcave function}\label{ex:qqf}
    {\rm Consider $\map{F}{\real \times \real}{\real}$ given by,
  \begin{equation}\label{eq:quasiF}
    F(x,z) = (2-e^{-x^2})(1+e^{-z^2}).
  \end{equation}
  Note that $F$ is $\CC^2$ and $\gradient_{xz} F(x,z) =
  -4xze^{-x^2}e^{-z^2}$ is locally Lipschitz. To see this, note that
  the function $x \mapsto x e^{-x^2}$ is bounded and is locally
  Lipschitz (as its derivative is bounded). Further, the product of
  two bounded and locally Lipschitz functions is locally
  Lipschitz~\cite[Theorem 4.6.3]{HHS:03} and so, $(x,z) \mapsto
  \gradient_{xz} F(x,z)$ is locally Lipschitz. The set of saddle
  points of $F$ is $\saddleset{F} = \{0\}$. Next, we show that $x
  \mapsto f(x) = c_1 - c_2 e^{-x^2}$, $c_2 > 0$, is locally strongly
  quasiconvex at $0$. Fix $\delta > 0$ and let $x, y \in B_\delta(0)$
  such that $f(y) \le f(x)$. Then, $\abs{y} \le \abs{x}$ and
  \begin{align*}
    &\max\{f(x),f(y)\} - f(\lambda x + (1-\lambda) y) - s \lambda (1-
    \lambda ) (x-y)^2
    \\
    & \qquad \qquad \qquad \qquad = c_2(-e^{-x^2} + e^{-(\lambda x +
      (1-\lambda)y)^2}) - s \lambda (1-\lambda) (x-y)^2
    \end{align*}
    \begin{align*}
    & \qquad \qquad \qquad \qquad= c_2 e^{-x^2}(-1 + e^{x^2 - (\lambda
      x + (1-\lambda)y)^2}) - s \lambda (1-\lambda) (x-y)^2
    \\
    & \qquad \qquad \qquad \qquad\ge c_2 e^{-x^2}(x^2 - (\lambda x +
    (1 - \lambda)y)^2) - s \lambda (1-\lambda) (x-y)^2
    \\
    & \qquad \qquad \qquad \qquad= (1-\lambda)(x-y)\Bigl(c_2
    e^{-x^2}(x+y) + \lambda (x-y)(c_2 e^{-x^2} - s)\Bigr) \ge 0,
  \end{align*}
  for $s \le c_2 e^{-\delta^2}$, given the fact that $\abs{y} \le
  \abs{x}$. Therefore, $f$ is locally strongly quasiconvex and so $-f$
  is locally strongly quasiconcave. Using these facts, we deduce that
  $F$ is locally jointly strongly quasiconvex-quasiconcave. Thus, the
  hypotheses of Proposition~\ref{pr:quasi} are met, implying local
  asymptotic stability of $\saddleset{F}$ under the saddle-point
  dynamics. Figure~\ref{fig:quasi-example} illustrates this fact in
  simulation.  Note that $F$ does not satisfy the conditions outlined
  in results of Section~\ref{subsec:cc} and~\ref{subsec:linearity}.}
  \oprocend

\begin{figure}[htb!]
    \centering
    \subfigure[$(x,z)$]{\includegraphics[width=.44\linewidth]{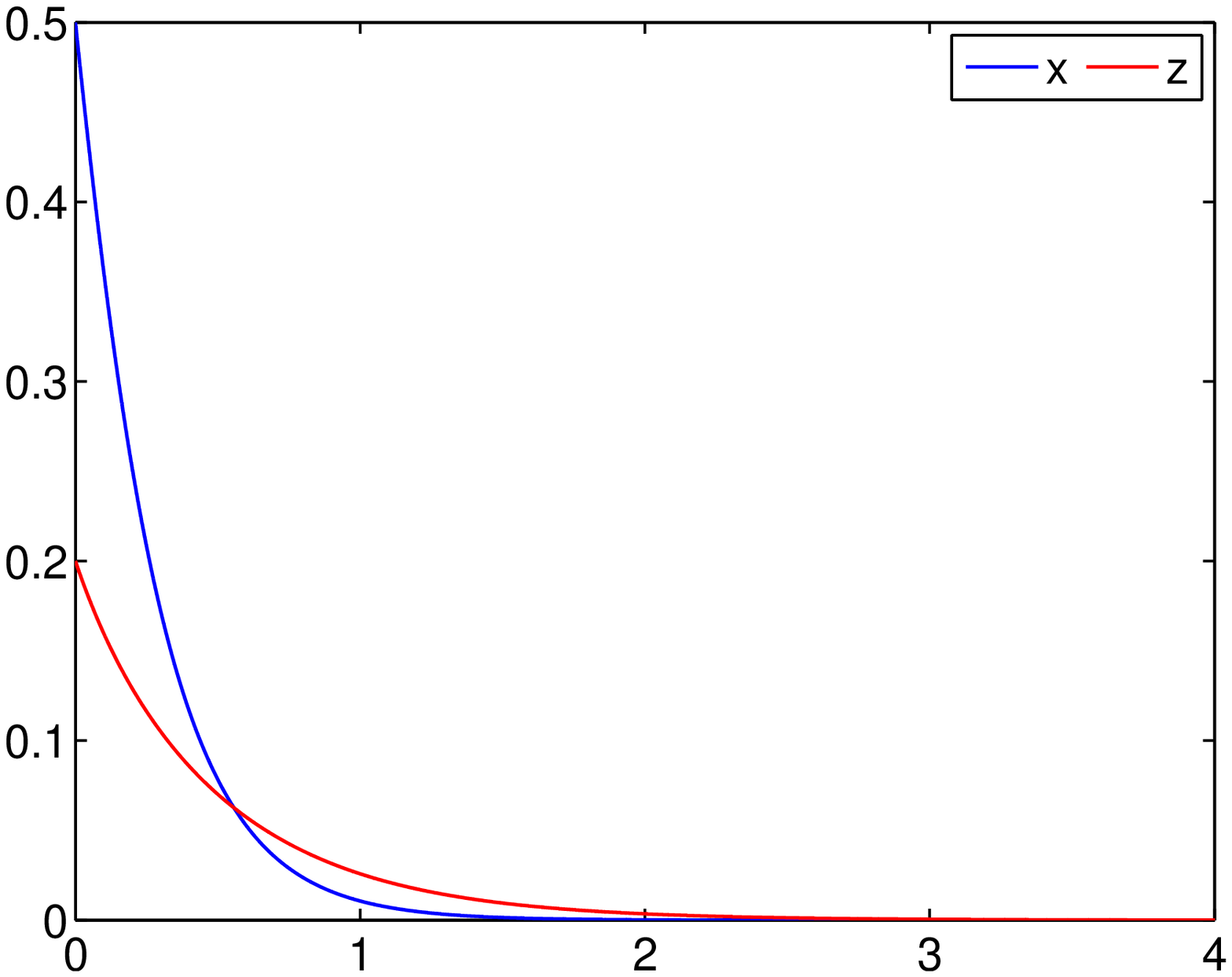}}
    \quad
    \subfigure[$(x^2+
    z^2)/2$]{\includegraphics[width=.44\linewidth]{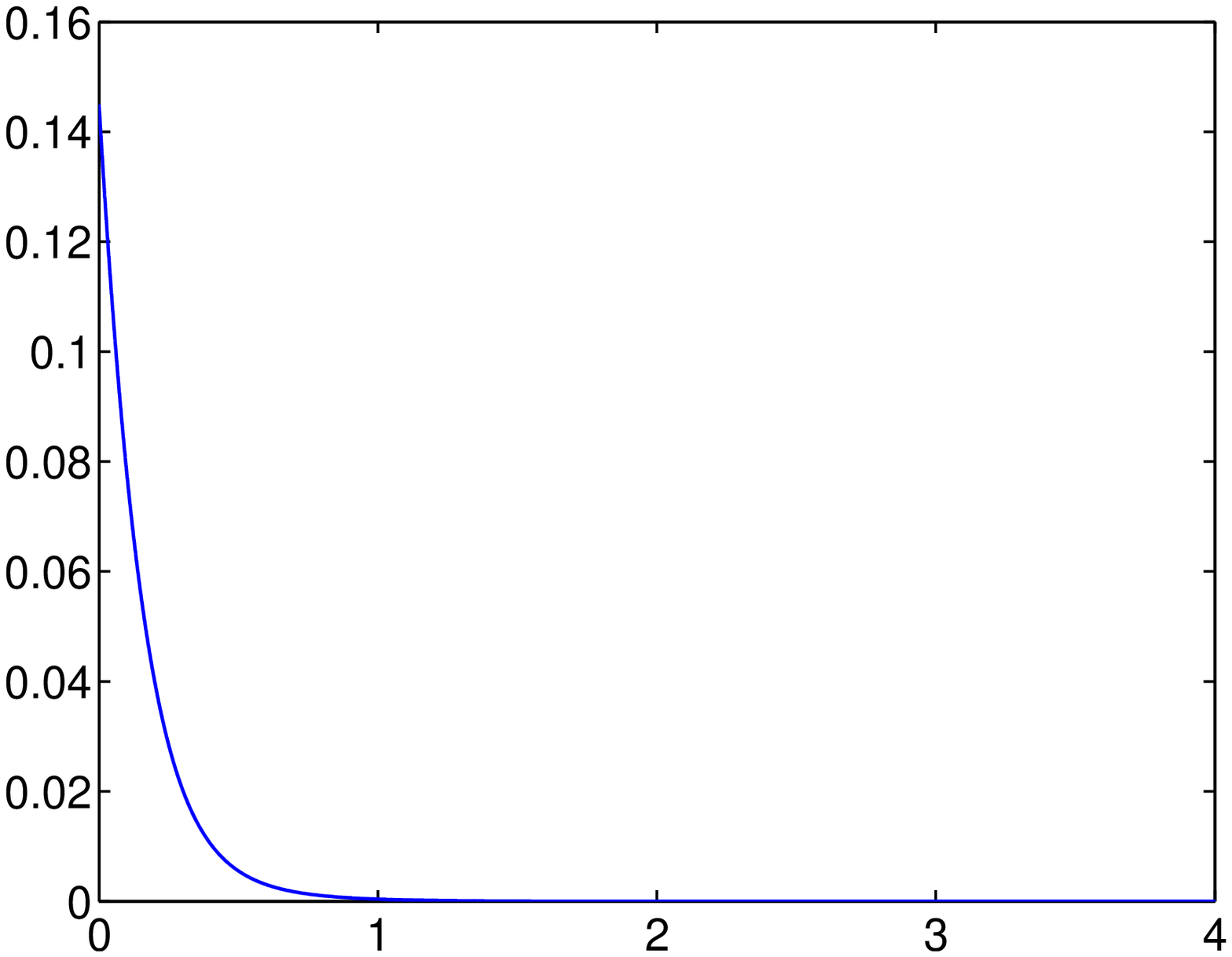}}
    \caption{(a) Trajectory of the saddle-point dynamics for $F$ given
      in~\eqref{eq:quasiF}. The initial
      condition is $(x,z) = (0.5,0.2)$. The trajectory converges to the
      saddle point $(0,0)$. (b) Evolution of the function $V$
      along the trajectory.}\label{fig:quasi-example}
    \vspace*{-1ex}
  \end{figure}
\end{example}

\section{Convergence analysis for general saddle functions}\label{sec:case2}

We study here the convergence properties of the saddle-point dynamics
associated to functions that are not convex-concave.  Our first
  result explores conditions for local asymptotic stability based on
  the linearization of the dynamics and properties of the
  eigenstructure of the Jacobian matrices. In particular, we assume
  that $\SD$ is piecewise $\CC^2$ and that the set of limit points of
  the Jacobian of $\SD$ at any saddle point have a common kernel and
  negative real parts for the nonzero eigenvalues. The proof is a
  direct consequence of Proposition~\ref{pr:convtoman-cl}.
\begin{proposition}\longthmtitle{Local asymptotic stability of
    manifold of saddle points via linearization -- piecewise $\CC^{3}$
    saddle function}\label{pr:localsaddlesetconv3}
  Given $\map{F}{\real^n \times \real^m}{\real}$, let
  $\SS \subset \saddleset{F}$ be a $p$-dimensional submanifold of
  saddle points. Assume that $F$ is $\CC^1$ with locally Lipschitz gradient
  on a neighborhood of $\SS$ and that the vector field $\SD$ is
  piecewise $\CC^2$. 
  Assume that at each $(\xo,\zo) \in
  \SS$, the set of matrices $\AA_* \subset \real^{n+m \times n+m}$
  defined as 
  \begin{align*}
    \AA_* = \setdef{\lim_{k \to
        \infty} D\SD(x_k,z_k)} 
    {(x_k,z_k) \to (x,z), (x_k,z_k) \in \real^{n+m} \setminus \Omega_{\SD}},
  \end{align*}
  where $\Omega_{\SD}$ is the set of points where $\SD$ is not
  differentiable, satisfies the following:
  \begin{enumerate}
  \item there exists an
    orthogonal matrix $Q \in \real^{n+m \times n+m}$ such that
    \begin{equation}\label{eq:q-transform}
      Q^\top A Q =
      \begin{bmatrix} 0 & 0 
        \\ 
        0 & \tilde{A} 
      \end{bmatrix},
    \end{equation}
    for all $A \in \AA_*$, where $\tilde{A} \in \real^{n+m-p \times
      n+m-p}$,
  \item the nonzero eigenvalues of the matrices in $\AA_*$ have
    negative real parts,
  \item there exists a positive definite matrix
    $P \in \real^{n+m-p \times n+m-p}$ such that
    \begin{align*}
      \tilde{A}^\top P + P \tilde{A} \prec
      0, 
    \end{align*}
    for all $\tilde{A}$ obtained by applying
    transformation~\eqref{eq:q-transform} on each $A \in \AA_*$.
  \end{enumerate}
  Then, $\SS$ is locally asymptotically stable
  under~\eqref{eq:fsystem-cl} and the trajectories converge to a point
  in $\SS$.
\end{proposition}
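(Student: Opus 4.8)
The plan is to recognize this statement as a specialization of the general stability result for manifolds of equilibria of piecewise smooth vector fields, Proposition~\ref{pr:convtoman-cl}, applied to the particular vector field $\SD$ on $\real^{n+m}$. The first thing I would check is that $\SS$ is genuinely a submanifold of \emph{equilibria} of $\SD$: since every $(\xo,\zo) \in \SS \subset \saddleset{F}$ is a critical point of $F$, we have $\gradient_x F(\xo,\zo) = 0$ and $\gradient_z F(\xo,\zo) = 0$, so $\SD(\xo,\zo) = (-\gradient_x F(\xo,\zo), \gradient_z F(\xo,\zo)) = 0$ on all of $\SS$. Combined with the standing assumption that $\SS$ is a $p$-dimensional submanifold, this identifies $\SS$ as a $p$-dimensional manifold of equilibria. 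I would also record the regularity that makes the machinery applicable: $F$ being $\CC^1$ with locally Lipschitz gradient makes $\SD$ locally Lipschitz (hence trajectories exist and are unique), while the hypothesis that $\SD$ is piecewise $\CC^2$ supplies the structure needed to define the limit-Jacobian set $\AA_*$ patchwise.

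Next I would verify that conditions (i)--(iii) on $\AA_*$ are exactly the eigenstructure hypotheses of Proposition~\ref{pr:convtoman-cl}, so that the latter applies verbatim with $f = \SD$ and ambient dimension $N = n+m$. A useful consistency observation here is that, because $\SD$ vanishes identically on $\SS$, differentiating along $\SS$ forces the tangent space $T_{(\xo,\zo)}\SS$ into the kernel of each (smooth-side) Jacobian; since $\dim T_{(\xo,\zo)}\SS = p$ matches the size of the zero block in~\eqref{eq:q-transform}, the $p$ ``center'' directions picked out by the common orthogonal $Q$ are precisely the directions tangent to $\SS$. This is what lets center manifold theory treat $\SS$ itself as the center manifold: the dynamics restricted to $\SS$ is trivial (all equilibria), condition (ii) guarantees no unstable transverse directions, and the common quadratic certificate $P$ from condition (iii) furnishes a single Lyapunov function $\eta \mapsto \eta^\top P \eta$ in the transverse coordinate $\eta$ whose derivative is negative \emph{simultaneously} across all limit Jacobians, yielding transverse contraction uniformly over the patches. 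Invoking Proposition~\ref{pr:convtoman-cl} then delivers local asymptotic stability of $\SS$ under~\eqref{eq:fsystem-cl} together with convergence of each trajectory to a point of $\SS$.

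Because this proposition is a direct corollary, the genuine difficulty does not lie in the specialization itself but in the supporting general result. The hard part will be establishing Proposition~\ref{pr:convtoman-cl}: one must adapt center manifold theory to a merely piecewise $\CC^2$ field, where the Jacobian is set-valued at the switching surfaces and one only controls the \emph{limit} Jacobians in $\AA_*$ rather than a single linearization. The delicate points there are (a) building an invariant center manifold and justifying that it coincides with the equilibrium manifold $\SS$ despite nonsmoothness, and (b) upgrading the pointwise eigenvalue condition (ii) to the uniform, patch-independent contraction estimate supplied by the common $P$ in condition (iii), which is precisely the role a common quadratic Lyapunov function plays for switched linear systems. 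Within the present proof, the only thing I must be careful about is matching these hypotheses cleanly and confirming the equilibrium/kernel alignment described above; everything else is inherited from Proposition~\ref{pr:convtoman-cl}.
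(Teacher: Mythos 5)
Your proposal matches the paper's approach exactly: the paper states that this proposition "is a direct consequence of Proposition~\ref{pr:convtoman-cl}," and your argument — observing that $\SS$ consists of equilibria of $\SD$ since saddle points are critical points, checking the regularity and that hypotheses (i)--(iii) are precisely those of the general result with $f=\SD$ and ambient dimension $n+m$, then invoking it — is that same specialization, with the genuine work correctly deferred to the proof of Proposition~\ref{pr:convtoman-cl}.
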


When $F$ is sufficiently smooth, we can refine the
above result as follows.

\begin{corollary}\longthmtitle{Local asymptotic stability of manifold
    of saddle points via linearization -- $\CC^{3}$ saddle
    function}\label{cr:localsaddlesetconv31}
  Given $\map{F}{\real^n \times \real^m}{\real}$, let
  $\SS \subset \saddleset{F}$ be a $p$-dimensional manifold of saddle
  points. Assume $F$ is $\CC^3$ on a neighborhood of $\SS$ and that
  the Jacobian of $\SD$ at each point in $\SS$ has no eigenvalues in
  the imaginary axis other than $0$, which is semisimple with
  multiplicity $p$. Then, $\SS$ is locally asymptotically stable under
  the saddle-point dynamics $\SD$ and the trajectories converge to a
  point.
\end{corollary}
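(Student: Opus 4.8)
The plan is to derive the corollary by checking that the hypotheses of Proposition~\ref{pr:localsaddlesetconv3} hold at every point of $\SS$. First I would observe that when $F$ is $\CC^3$ on a neighborhood of $\SS$, its gradient is $\CC^2$ and hence locally Lipschitz, and the vector field $\SD=(-\gradient_x F,\gradient_z F)$ is $\CC^2$; in particular $\SD$ is trivially piecewise $\CC^2$ with the single patch $\real^{n+m}$, so the set $\Omega_{\SD}$ of points of nondifferentiability is empty. Consequently, at each $(\xo,\zo)\in\SS$ the set $\AA_*$ reduces to the singleton $\{A\}$ with $A:=D\SD(\xo,\zo)$, the requirement of a common $Q$ and a common $P$ across $\AA_*$ is vacuous, and I only need to verify conditions (i)--(iii) for this one matrix.

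The crux is to exploit the structure of the saddle Jacobian. Writing $H=\gradient_{xx}F(\xo,\zo)\succeq 0$ and $G=\gradient_{zz}F(\xo,\zo)\preceq 0$ (the second-order conditions for the inner minimization in $x$ and maximization in $z$) and $B=\gradient_{xz}F(\xo,\zo)$, one has $A=\begin{bmatrix} -H & -B \\ B^\top & G\end{bmatrix}$, so that $A+A^\top=\begin{bmatrix} -2H & 0 \\ 0 & 2G\end{bmatrix}\preceq 0$. Thus the symmetric part of $A$ is negative semidefinite, which places the spectrum of $A$ in the closed left half-plane; combined with the hypothesis that $A$ has no imaginary-axis eigenvalues other than $0$, this yields condition (ii), namely that the nonzero eigenvalues have strictly negative real parts. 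The negative semidefiniteness also gives the key identity $\nll(A)=\nll(A^\top)$: if $Av=0$ then $v^\top(A+A^\top)v=0$, and since $A+A^\top\preceq 0$ this forces $(A+A^\top)v=0$, hence $A^\top v=0$; the reverse inclusion follows identically, as $A$ and $A^\top$ share the same symmetric part. Because $0$ is semisimple with multiplicity $p$, $\dim\nll(A)=p$, matching $\dim\SS$; indeed, differentiating the identity $\SD\equiv 0$ along $\SS$ shows $T_{(\xo,\zo)}\SS\subseteq\nll(A)$, so the two coincide. Taking $Q_1$ with orthonormal columns spanning $\nll(A)=\nll(A^\top)$ and $Q_2$ spanning its orthogonal complement, the identities $AQ_1=0$ and $Q_1^\top A=0$ make $Q=[Q_1\ Q_2]$ orthogonal with $Q^\top A Q=\mathrm{diag}(0,\tilde A)$, $\tilde A=Q_2^\top A Q_2$, which establishes condition (i).

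Finally, for condition (iii) I would note that $\mathrm{spec}(A)=\{0\ (p\text{ times})\}\cup\mathrm{spec}(\tilde A)$, so the semisimplicity of $0$ forces $\tilde A$ to have no zero eigenvalue, and by (ii) all its eigenvalues have negative real parts; hence $\tilde A$ is Hurwitz and the Lyapunov theorem furnishes $P\succ 0$ with $\tilde A^\top P+P\tilde A\prec 0$. All three hypotheses of Proposition~\ref{pr:localsaddlesetconv3} then hold at each point of $\SS$, and its conclusion delivers the local asymptotic stability of $\SS$ and pointwise convergence. I expect the main obstacle to be condition (i): a generic matrix with a semisimple zero eigenvalue does \emph{not} admit an orthogonal block-diagonalization of the required form, so the argument hinges entirely on recognizing that the saddle structure makes the symmetric part of $A$ negative semidefinite and hence $\nll(A)=\nll(A^\top)$. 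The sign conventions for $H$ and $G$ (minimization in $x$, maximization in $z$) must be tracked carefully, since they are exactly what makes $A+A^\top\preceq 0$ rather than positive semidefinite.
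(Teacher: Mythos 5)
Your proposal is correct and follows essentially the same route as the paper: reduce to Proposition~\ref{pr:localsaddlesetconv3} by noting $\SD$ is $\CC^2$ (so $\AA_*$ is the singleton $\{D\SD(\xo,\zo)\}$), use $\gradient_{xx}F\succeq 0$, $\gradient_{zz}F\preceq 0$ to get $D\SD+D\SD^\top\preceq 0$ and hence spectrum in the closed left half-plane, and then invoke the spectral hypotheses to verify conditions (i)--(iii). In fact you supply a detail the paper only asserts, namely the explicit construction of the orthogonal $Q$ via the identity $\nll(A)=\nll(A^\top)$ forced by the negative semidefinite symmetric part, which is exactly what makes condition (i) hold for a matrix whose zero eigenvalue is merely semisimple.
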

\begin{proof}
  Since $F$ is $\CC^3$, the map $\SD$ is $\CC^2$ and so, the limit
  point of Jacobian matrices at a saddle point $(\xo,\zo) \in \SS$ is
  the Jacobian at that point itself, that is,
  \begin{align*}
    D \SD =
    \begin{bmatrix}
      -\gradient_{xx} F & -\gradient_{xz} F
      \\
      \gradient_{zx} F & \gradient_{zz} F
    \end{bmatrix}_{(\xo,\zo)}.
  \end{align*}
  From the definition of saddle point, we have $\gradient_{xx}
  F(\xo,\zo) \succeq 0$ and $\gradient_{zz} F(\xo,\zo) \preceq 0$.  In
  turn, we obtain $D \SD + D \SD^\top \preceq 0$, and since
  $\realpart(\lambda_i(D \SD)) \le \lambda_{\max} (\frac{1}{2} (D \SD
  + D \SD^\top))$~\cite[Fact 5.10.28]{DSB:05}, we deduce that
  $\realpart(\lambda_i (D \SD)) \le 0$. The statement now follows from
  Proposition~\ref{pr:localsaddlesetconv3} using the fact that the
  properties of the eigenvalues of $D\SD$ shown here imply existence
  of an orthonormal transformation leading to a form of $D\SD$ that
  satisfies assumptions {\it (i)-(iii)} of
  Proposition~\ref{pr:localsaddlesetconv3}.
\end{proof}

Next, we provide a sufficient condition under which the Jacobian of
$\SD$ for a saddle function $F$ that is linear in its second argument
satisfies the hypothesis of Corollary~\ref{cr:localsaddlesetconv31}
regarding the lack of eigenvalues on the imaginary axis other than
$0$.

\begin{lemma}\longthmtitle{Sufficient condition for absence of
    imaginary eigenvalues of the Jacobian of
    $\SD$}\label{le:eigenvalue}
  Let $\map{F}{\real^n \times \real^m}{\real}$ be $\CC^2$ and linear
  in the second argument. Then, the Jacobian of $\SD$ at any saddle
  point $(\xo,\zo)$ of $F$ has no eigenvalues on the imaginary axis
  except for $0$ if $\rge(\gradient_{zx} F(\xo,\zo)) \cap
  \nll(\gradient_{xx} F(\xo,\zo)) = \{0\}$.
\end{lemma}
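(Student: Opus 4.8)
The plan is to argue by contradiction. Suppose the Jacobian $J := D\SD(\xo,\zo)$ admits an eigenvalue $i\omega$ with $\omega \in \real \setminus \{0\}$ and a corresponding eigenvector $(u;v) \neq 0$, where $u \in \mathbb{C}^n$ and $v \in \mathbb{C}^m$; I will show this forces $(u;v) = 0$, a contradiction. First I would record the block structure of $J$. Because $F$ is linear in its second argument, $\nabla_z F$ depends only on $x$, so $\nabla_{zz} F(\xo,\zo) = 0$ and
\[
J = \begin{bmatrix} -\nabla_{xx}F & -\nabla_{xz}F \\ \nabla_{zx}F & 0 \end{bmatrix}_{(\xo,\zo)} .
\]
Write $H := \nabla_{xx}F(\xo,\zo)$, which is symmetric and positive semidefinite by the second-order condition for a min--max saddle point, and note that the two mixed blocks are transposes of one another. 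The eigenvalue equations then read $-Hu - \nabla_{xz}F\, v = i\omega u$ and $\nabla_{zx}F\, u = i\omega v$.

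Next I would eliminate $v$. Since $\omega \neq 0$, the second equation gives $v = (i\omega)^{-1}\nabla_{zx}F\, u$; substituting into the first and multiplying through by $i\omega$ (using $(i\omega)^2 = -\omega^2$) yields a single identity of the form $(\nabla_{xz}F\,\nabla_{zx}F - \omega^2 I)u = -i\omega H u$. Taking the Hermitian inner product with $u$ makes the left-hand side $\norm{\nabla_{zx}F\, u}^2 - \omega^2 \norm{u}^2$, which is real, while the right-hand side is $-i\omega\,(u^{*} H u)$. Here $u^{*} H u$ is real and nonnegative: writing $u = a + ib$ with $a,b \in \real^n$, the symmetry of $H$ cancels the imaginary part and leaves $a^\top H a + b^\top H b \ge 0$. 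Equating a real number to a purely imaginary one forces both to vanish, so in particular $\omega\,(u^{*} H u) = 0$.

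The key step is to extract the two subspaces that trap $u$. From $\omega \neq 0$ and $u^{*} H u = 0$ with $H \succeq 0$ I conclude $Hu = 0$, i.e. $u \in \nll(\nabla_{xx}F(\xo,\zo))$. Feeding $Hu = 0$ back into the first eigenvalue equation gives $u = (i/\omega)\,\nabla_{xz}F\, v$, so $u$ lies in the range of the mixed second derivative coupling $z$ into $\dot x$; by equality of mixed partials this is precisely $\rge(\nabla_{zx}F(\xo,\zo))$. The hypothesis $\rge(\nabla_{zx}F(\xo,\zo)) \cap \nll(\nabla_{xx}F(\xo,\zo)) = \{0\}$ then forces $u = 0$, whereupon the second eigenvalue equation $\nabla_{zx}F\, u = i\omega v$ together with $\omega \neq 0$ gives $v = 0$. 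Thus $(u;v) = 0$, contradicting that it is an eigenvector, and no nonzero imaginary eigenvalue can exist.

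I expect the main obstacle to be the careful handling of the complex eigenvector together with the transpose bookkeeping. Concretely, one must justify that $u^{*} H u$ is real and nonnegative and that its vanishing implies $Hu = 0$ for a merely positive semidefinite $H$ (rather than a definite one), and one must match the transpose conventions so that the subspace containing $u$ is exactly $\rge(\nabla_{zx}F)$ as written in the statement, rather than the range of its transpose. The algebraic elimination of $v$ and the real/imaginary separation are otherwise routine, and it is the positive semidefiniteness of $\nabla_{xx}F$ supplied by the saddle-point structure that makes the argument close.
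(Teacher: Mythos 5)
Your proof is correct and follows essentially the same route as the paper's: argue by contradiction from the eigenvalue equation, use the quadratic form with the eigenvector together with $\gradient_{xx}F(\xo,\zo)\succeq 0$ to place the $x$-component of the eigenvector in $\nll(\gradient_{xx}F(\xo,\zo))$, then use the eigenvalue equation again to place it in the range of the off-diagonal coupling block, so the intersection hypothesis forces it to vanish. The only difference is cosmetic — you work with the complex eigenvector and eliminate $v$ up front, whereas the paper splits into real and imaginary parts and manipulates the four resulting real equations — and your handling of the semidefiniteness and transpose bookkeeping is sound.
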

\begin{proof}
  The Jacobian of $\SD$ at a saddle point $(\xo,\zo)$ for a saddle
  function $F$ that is linear in $z$ is given as
  \begin{align*}
    D \SD = \begin{bmatrix} A & B
      \\
      -B^\top & 0 \end{bmatrix},
  \end{align*}
  where $A = -\gradient_{xx} F(\xo,\zo)$ and $B = -\gradient_{zx}
  F(\xo,\zo)$. We reason by contradiction. Let $i\lambda$, $\lambda
  \not = 0$ be an imaginary eigenvalue of $D\SD$ with the
  corresponding eigenvector $a+ib$. Let $a = (a_1;a_2)$ and $b =
  (b_1;b_2)$ where $a_1, b_1 \in \real^n$ and $a_2, b_2 \in
  \real^m$. Then the real and imaginary parts of the condition $D \SD
  (a+ib) = (i \lambda )(a+ib)$ yield
  \begin{align}
    A a_1 + B a_2 = - \lambda b_1, & \qquad -B^\top a_1 = - \lambda b_2,
    \label{eq:eig-cond1}
    \\
    A b_1 + B b_2 = \lambda a_1, & \qquad -B^\top b_1 = \lambda
    a_2. \label{eq:eig-cond2}
  \end{align}
  Pre-multiplying the first equation of~\eqref{eq:eig-cond1} with
  $a_1^\top$ gives $a_1^\top A a_1 + a_1^\top B a_2 = - \lambda
  a_1^\top b_1$. Using the second equation of~\eqref{eq:eig-cond1}, we
  get $a_1^\top A a_1 = -\lambda (a_1^\top b_1 + a_2^\top b_2)$. A
  similar procedure for the set of equations in~\eqref{eq:eig-cond2}
  gives $b_1^\top A b_1 = \lambda (a_1^\top b_1 + a_2^\top
  b_2)$. These conditions imply that $a_1^\top A a_1 = - b_1^\top A
  b_1$. Since $A$ is negative semi-definite, we obtain $a_1 , b_1 \in
  \nll(A)$. Note that $a_1, b_1 \not = 0$, because otherwise it would
  mean that $a=b=0$. Further, using this fact in the first equations
  of~\eqref{eq:eig-cond1} and~\eqref{eq:eig-cond2}, respectively, we
  get
  \begin{align*}
    B a_2 = - \lambda b_1, \qquad B b_2 = \lambda a_1.
  \end{align*}
  That is, $a_1, b_1 \in \rge(B)$, a contradiction.
\end{proof}

The following example illustrates an application of the above results
to a nonconvex constrained optimization problem.

\begin{example}\longthmtitle{Saddle-point dynamics for nonconvex
    optimization}\label{ex:nonconvex-opt}
  {\rm Consider the following constrained optimization on $\real^3$,
    \begin{subequations}\label{eq:nonconvexopt}
      \begin{align}
        \mathrm{minimize} & \quad (\norm{x} - 1)^2,
        \\
        \text{subject to} & \quad x_3 = 0.5,
      \end{align}
    \end{subequations}
    where $x=(x_1,x_2,x_3)\in \real^3$. The optimizers are
    $\setdef{x \in \real^3}{x_3 = 0.5, x_1^2 + x_2^2 = 0.75}$.  The
    Lagrangian $\map{L}{\real^3 \times \real}{\real}$ is given by
    \begin{equation*}
      L(x,z) = (\norm{x} - 1)^2 + z(x_3 - 0.5),
    \end{equation*}
    and its set of saddle points is the one-dimensional manifold
    $\saddleset{L} = \setdef{(x,z) \in \real^3 \times \real}{x_3 =
      0.5, \, x_1^2 + x_2^2 = 0.75, \, z=0}$.
    The saddle-point dynamics of~$L$ takes the form
    \begin{subequations}\label{eq:Ldynamics}
      \begin{align}
        \dot x & = -2\Bigl(1 - \frac{1}{\norm{x}} \Bigr) x - [0, 0,
                 z]^\top,
        \\
        \dot z & = x_3 - 0.5.
      \end{align}
    \end{subequations}
    Note that $\saddleset{L}$ is nonconvex and that $L$ is nonconvex
    in its first argument on any neighborhood of any saddle point.
    Therefore, results that rely on the convexity-concavity properties
    of~$L$ are not applicable to establish the asymptotic convergence
    of~\eqref{eq:Ldynamics} to the set of saddle points.  This can,
    however, be established through
    Corollary~\ref{cr:localsaddlesetconv31} by observing that the
    Jacobian of $\SD$ at any point of $\saddleset{L}$ has $0$ as an
    eigenvalue with multiplicity one and the rest of the eigenvalues
    are not on the imaginary axis. To show this, consider
      $(\xo,\zo) \in \saddleset{L}$. Note that $D \SD (\xo,\zo)
      = \begin{bmatrix} -2\xo^\top \xo & -e_3 \\ e_3^\top & 0
      \end{bmatrix}$, where $e_3 = [0,0,1]^\top$. One can deduce from
      this that $v \in \nll(D \SD(\xo,\zo))$ if and only if $\xo^\top
      [v_1,v_2,v_3]^\top = 0$, $v_3 = 0$, and $v_4 = 0$. These three
      conditions define a one-dimensional space and so $0$ is an
      eigenvalue of $D \SD(\xo,\zo)$ with multiplicity $1$. To show
      that the rest of eigenvalues do not lie on the imaginary axis, we show
      that the hypotheses of Lemma~\ref{le:eigenvalue} are met. At any
      saddle point $(\xo,\zo)$, we have $\gradient_{zx} L(\xo,\zo) =
      e_3$ and $\gradient_{xx} L(\xo,\zo) = 2 \xo^\top \xo$. If $v \in
      \rge(\gradient_{zx} L(\xo,\zo)) \cap \nll(\gradient_{xx}
      L(\xo,\zo))$ then $v = [0,0,\lambda]^\top$, $\lambda \in \real$,
      and $\xo^\top v = 0$. Since $(\xo)_3 = 0.5$, we get $\lambda =
      0$ and hence, the hypotheses of Lemma~\ref{le:eigenvalue} are
      satisfied.
    Figure~\ref{fig:nonconvex-opt-example} illustrates
    in simulation the convergence of the trajectories to a saddle
    point.  The point of convergence depends on the initial
      condition.}\oprocend
  \begin{figure}[htb!]
    \centering
    \subfigure[$(x,z)$]{\includegraphics[width=.44\linewidth]{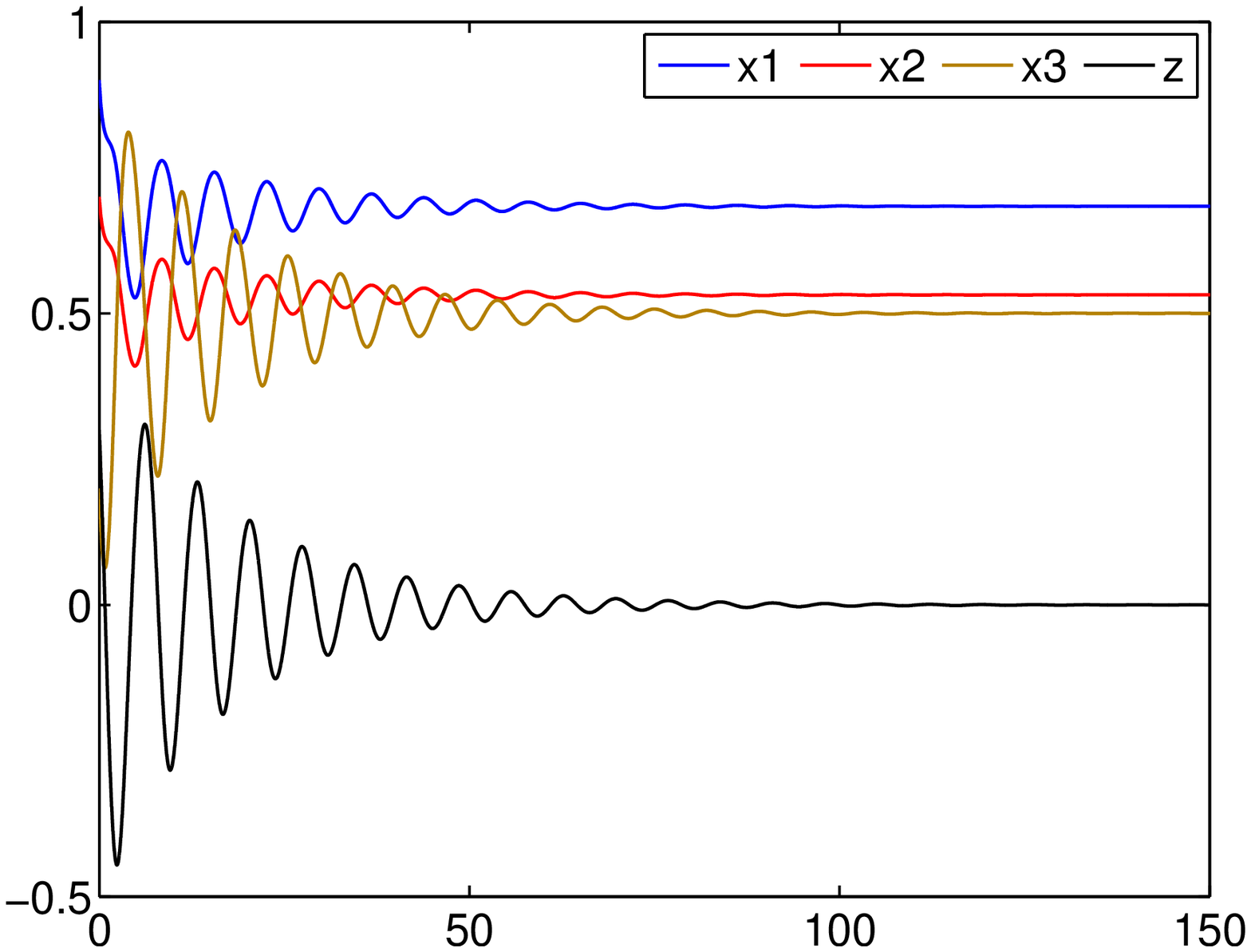}}
    \quad
    \subfigure[$(\norm{x} - 1)^2$]{\includegraphics[width=.44\linewidth]{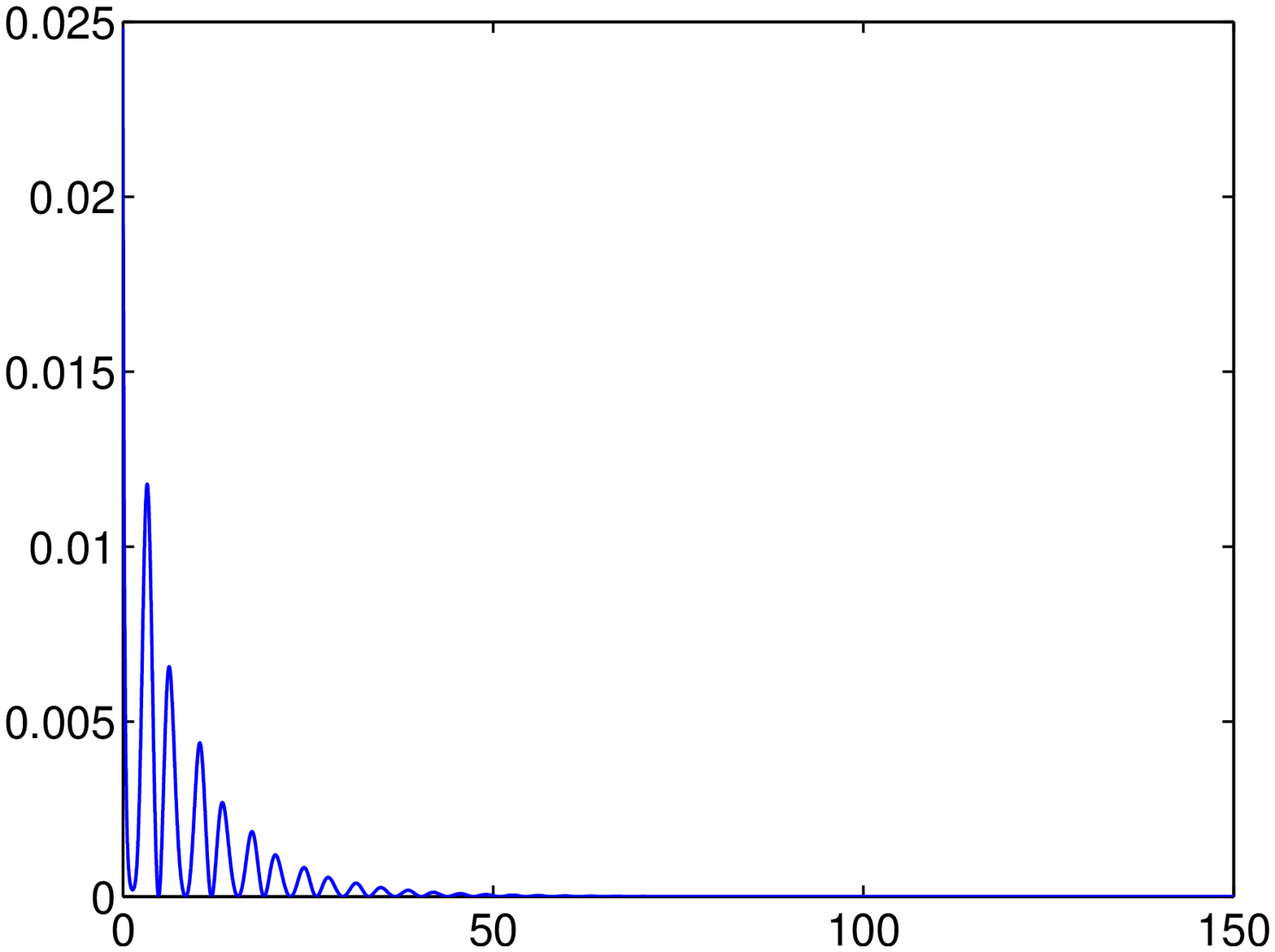}}
    \caption{(a) Trajectory of the saddle-point
      dynamics~\eqref{eq:Ldynamics} for the Lagrangian of the
      constrained optimization problem~\eqref{eq:nonconvexopt}. The
      initial condition is $(x,z) = (0.9,0.7,0.2,0.3)$.  The
      trajectory converges to $(0.68,0.53,0.50,0) \in \saddleset{L}$.
      (b) Evolution of the objective function of the
      optimization~\eqref{eq:nonconvexopt} along the trajectory.  The
      value converges to the
      minimum,~$0$.}\label{fig:nonconvex-opt-example}
    \vspace*{-1ex}
  \end{figure}
\end{example}

There are functions that do not satisfy the hypotheses of
Proposition~\ref{pr:localsaddlesetconv3} whose saddle-point dynamics
still seems to enjoy local asymptotic convergence properties. As an
example, consider the function $\map{F}{\real^2 \times \real}{\real}$,
\begin{align}\label{eq:asympex}
  F(x,z) = (\norm{x} - 1)^4 - z^2 \norm{x}^2 ,
\end{align}
whose set of saddle points is the one-dimensional manifold
$\saddleset{F} = \setdef{(x,z) \in \real^2 \times \real}{\norm{x}=1,
  z=0}$. The Jacobian of the saddle-point dynamics at any $(x,z) \in
\saddleset{F}$ has $-2$ as an eigenvalue and $0$ as the other
eigenvalue, with multiplicity $2$, which is greater than the dimension
of $\saddleset{F}$ (and therefore
Proposition~\ref{pr:localsaddlesetconv3} cannot be applied).
Simulations show that the trajectories of the saddle-point dynamics
asymptotically approach~$\saddleset{S}$ if the initial condition is
close enough to this set.  Our next result allows us to formally
establish this fact by studying the behavior of the distance function
along the proximal normals to~$\saddleset{F}$.
\begin{proposition}\longthmtitle{Asymptotic stability of manifold of
    saddle points via proximal normals}\label{pr:localsaddlesetconv4}
  Let $\map{F}{\real^n \times \real^m}{\real}$ be $\CC^2$ and
  $\SS \subset \saddleset{F}$ be a closed set.  Assume there exist
  constants $\lambda_M, k_1, k_2, \alpha_1, \beta_1 > 0$ and
  $L_x, L_z, \alpha_2, \beta_2 \ge 0$ such that the following hold
  \begin{enumerate}
    \item either $L_x = 0$ or $\alpha_1 \le \alpha_2 + 1$,
      \label{as:condition-one}
    \item either $L_z = 0$ or $\beta_1 \le \beta_2 + 1$,
      \label{as:condition-two}
    \item for every
  $(\xo,\zo) \in \SS$ and every proximal normal
  $\eta = (\eta_x,\eta_z) \in \real^n \times \real^m$ to $\SS$ at
  $(\xo,\zo)$ with $\norm{\eta} = 1$, the functions
  \begin{align*}
    & [0,\lambda_M) \ni \lambda \mapsto F(\xo+\lambda \eta_x, \zo) ,
    \\
    & [0,\lambda_M) \ni \lambda \mapsto F(\xo,\zo+\lambda \eta_z) ,
  \end{align*}
  are convex and concave, respectively, with
  \begin{subequations}\label{eq:Fproxbounds}
    \begin{align}
      F(\xo+\lambda \eta_x, \zo) - F(\xo,\zo) & \ge k_1 \norm{\lambda
        \eta_x}^{\alpha_1}, \label{eq:Fproxbounds-convex}
      \\
      F(\xo,\zo+\lambda \eta_z) - F(\xo,\zo) & \le - k_2 \norm{\lambda
        \eta_z}^{\beta_1}, \label{eq:Fproxbounds-concave}
    \end{align}
  \end{subequations}
  and, for all $\lambda \in [0,\lambda_M)$ and all $t \in [0,1]$,
  \begin{multline}\label{eq:Hessian-F-variation}
    \norm{\gradient_{xz} F(\xo+t\lambda \eta_x,\zo+\lambda \eta_z) -
      \gradient_{xz} F(\xo+\lambda \eta_x,\zo+t\lambda \eta_z)}
    \\
    \le L_x \norm{ \lambda \eta_x}^{\alpha_2} + L_z \norm{\lambda
      \eta_z}^{\beta_2} .
  \end{multline}
  \end{enumerate}
  Then, $\SS$ is locally asymptotically stable under the saddle-point
  dynamics $\SD$.  Moreover, the convergence of the trajectories is to
  a point if every point of $\SS$ is stable.  The convergence is global
  if, for every $\lambda_M \in \realnonnegative$, there exist
  $k_1,k_2,\alpha_1,\beta_1 >0$ such that the above hypotheses
  (i)-(iii) are satisfied by these constants along with $L_x = L_z =0$.
\end{proposition}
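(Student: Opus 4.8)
The plan is to use the squared distance to $\SS$ as a nonsmooth Lyapunov function, $V(x,z)=\tfrac12 d_{\SS}^2(x,z)$, and to show that its generalized Lie derivative along $\SD$ is strictly negative on a punctured neighborhood of $\SS$. Since $\SS$ is closed but need not be convex, $V$ is only locally Lipschitz, so I would work with its generalized gradient; dividing~\eqref{eq:gen-gradient-d} by two gives $\partial V(x,z)=\mathrm{co}\setdef{(x-\xo,z-\zo)}{(\xo,\zo)\in\proj_{\SS}(x,z)}$. As $\SD$ is continuous and every element of $\partial V$ is a convex combination of vectors $(x-\xo,z-\zo)$ with $(\xo,\zo)$ a nearest point, a convex combination of negative numbers is negative, so it suffices to show that for each such nearest point the quantity $-(x-\xo)^\top\gradient_x F(x,z)+(z-\zo)^\top\gradient_z F(x,z)$ is strictly negative; this makes the whole set-valued Lie derivative negative and yields local asymptotic stability of $\SS$ via the nonsmooth Lyapunov/LaSalle machinery.

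The heart of the argument is the estimate at a fixed nearest point. Writing $(x,z)=(\xo+\lambda\eta_x,\zo+\lambda\eta_z)$ with $\lambda=d_{\SS}(x,z)$ and $\eta=(\eta_x,\eta_z)$ the unit proximal normal places us exactly in the setting of hypothesis (iii). I would reduce the diagonal evaluation to the two ``pure'' directions by integrating $\gradient_{xz}F$ in $z$ and $\gradient_{zx}F$ in $x$, obtaining
\[
-(x-\xo)^\top\gradient_x F(x,z)+(z-\zo)^\top\gradient_z F(x,z)=-\lambda\,\eta_x^\top\gradient_x F(\xo+\lambda\eta_x,\zo)+\lambda\,\eta_z^\top\gradient_z F(\xo,\zo+\lambda\eta_z)+R,
\]
where, using the symmetry $\gradient_{zx}F=(\gradient_{xz}F)^\top$ valid since $F\in\CC^2$, the two cross terms collapse into the single remainder
\[
R=\lambda^2\,\eta_x^\top\int_0^1\big[\gradient_{xz}F(\xo+t\lambda\eta_x,\zo+\lambda\eta_z)-\gradient_{xz}F(\xo+\lambda\eta_x,\zo+t\lambda\eta_z)\big]\eta_z\,dt.
\]
The integrand is precisely the difference bounded in~\eqref{eq:Hessian-F-variation}, so $\abs{R}\le L_x\norm{\eta_x}^{\alpha_2+1}\norm{\eta_z}\lambda^{\alpha_2+2}+L_z\norm{\eta_x}\norm{\eta_z}^{\beta_2+1}\lambda^{\beta_2+2}$.

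For the pure terms I would use the convexity/concavity in (iii): since $(\xo,\zo)$ is a critical point, $g(\lambda)=F(\xo+\lambda\eta_x,\zo)$ has $g'(0)=0$, and convexity gives $g'(\lambda)\ge(g(\lambda)-g(0))/\lambda$, which with~\eqref{eq:Fproxbounds-convex} yields $\eta_x^\top\gradient_x F(\xo+\lambda\eta_x,\zo)\ge k_1\norm{\eta_x}^{\alpha_1}\lambda^{\alpha_1-1}$; the concave estimate is analogous. Substituting,
\[
\Lie_{\SD}V\le-k_1\norm{\eta_x}^{\alpha_1}\lambda^{\alpha_1}-k_2\norm{\eta_z}^{\beta_1}\lambda^{\beta_1}+L_x\norm{\eta_x}^{\alpha_2+1}\norm{\eta_z}\lambda^{\alpha_2+2}+L_z\norm{\eta_x}\norm{\eta_z}^{\beta_2+1}\lambda^{\beta_2+2}.
\]
Here the role of (i)--(ii) is clear: when $L_x>0$ we have $\alpha_1\le\alpha_2+1$, so $\alpha_2+2-\alpha_1\ge1$ and, using $\norm{\eta_x},\norm{\eta_z}\le1$, the $L_x$ term is dominated by $\tfrac12 k_1\norm{\eta_x}^{\alpha_1}\lambda^{\alpha_1}$ once $\lambda$ is small enough, uniformly in $\eta$; symmetrically for $L_z$. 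Hence $\Lie_{\SD}V\le-\tfrac12 k_1\norm{\eta_x}^{\alpha_1}\lambda^{\alpha_1}-\tfrac12 k_2\norm{\eta_z}^{\beta_1}\lambda^{\beta_1}<0$ for $\lambda>0$ small, since $\norm{\eta_x}^2+\norm{\eta_z}^2=1$ forces one factor to stay bounded away from zero. This is the strict decrease I need. The pointwise-convergence claim then follows from Lemma~\ref{le:convtopoint} once every point of $\SS$ is stable, and the global claim follows because with $L_x=L_z=0$ the remainder $R$ vanishes and the bound holds for every $\lambda<\lambda_M$ with $\lambda_M$ arbitrary, so $V$ is a global Lyapunov function.

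I expect the main obstacle to be twofold. First, carrying out the nonsmooth Lyapunov analysis rigorously: passing from the per-nearest-point estimate to a statement about the full generalized gradient and set-valued Lie derivative of $d_{\SS}^2$, and invoking the correct nonsmooth stability theorem. Second, within the estimate itself, the symmetrization that collapses the two cross terms into exactly the quantity controlled by~\eqref{eq:Hessian-F-variation}, and then verifying that conditions (i)--(ii) on the exponents make the domination uniform in the direction $\eta$---in particular near the degenerate directions $\eta_x=0$ or $\eta_z=0$, where both the corresponding negative term and the associated positive remainder term vanish simultaneously.
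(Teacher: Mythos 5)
Your proposal is correct and follows essentially the same route as the paper's proof: the squared distance to $\SS$ as a nonsmooth Lyapunov function, the decomposition of the Lie derivative at each nearest point into the two pure convexity/concavity terms plus an integral remainder of mixed-Hessian differences controlled by~\eqref{eq:Hessian-F-variation}, and the grouping of each remainder term with its matching negative term so that hypotheses (i)--(ii) yield strict negativity for small $\lambda$, uniformly in the proximal normal direction. The only differences are cosmetic (a factor of $1/2$ in $V$, and phrasing the first-order convexity bound through $g'(\lambda)\ge(g(\lambda)-g(0))/\lambda$ rather than the gradient inequality directly).
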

\begin{proof}
  Our proof is based on showing that there exists $\bar{\lambda} \in
  (0,\lambda_M]$ such that the distance function $d_{\SS}$ decreases
  monotonically and converges to zero along the trajectories of $\SD$
  that start in $\SS + B_{\bar{\lambda}}(0)$.
  From~\eqref{eq:gen-gradient-d},
  \begin{align*}
    \partial d_{\SS}^2(x,z) = \mathrm{co}\setdef{2(x-\xo; z-\zo)}
    { (\xo,\zo) \in \proj_{\SS}(x,z)} .
  \end{align*}
  Following~\cite{JC:08-csm-yo}, we compute the set-valued Lie
  derivative of $d_{\SS}^2$ along $\SD$, denoted
  $\setmap{\Lie_{\SD}d_{\SS}^2}{\real^n \times \real^m}{\real}$, as
  \begin{align*}
    \Lie_{\SD}d_{\SS}^2 (x,z) & = \mathrm{co}\setdef{-2(x-\xo)^\top
      \gradient_x F(x,z) +
      \\
      & 2(z - \zo)^\top \gradient_z F(x,z)}{(\xo,\zo) \in
      \proj_{\SS}(x,z)}.
  \end{align*}
  Since $d_{\SS}^2$ is globally Lipschitz and regular,
  cf. Section~\ref{sec:proximal-calculus}, the evolution of the
  function $d_{\SS}^2$ along any trajectory $t \mapsto (x(t),z(t))$
  of~\eqref{eq:saddledynamics} is differentiable at almost all $t \in
  \realnonnegative$, and furthermore, cf.~\cite[Proposition
  10]{JC:08-csm-yo},
  \begin{align*}
    \frac{d}{dt}(d_{\SS}^2(x(t),z(t)) \in \Lie_{\SD} d_{\SS}^2(x(t),z(t))
  \end{align*}
  for almost all $t \in \realnonnegative$.  Therefore, our goal is to
  show that $\max{\Lie_{\SD}d_{\SS}^2 (x,z)} < 0$ for all $(x,z) \in
  (\SS + B_{\bar{\lambda}}(0)) \setminus \SS$ for some $\bar{\lambda}
  \in (0,\lambda_M]$. Let $(x,z) \in \SS +
  B_{\lambda_M}(0)$ and take $(\xo,\zo) \in \proj_{\SS}(x,z)$. By
  definition, there exists a proximal normal $\eta = (\eta_x, \eta_z)$
  to $\SS$ at $(\xo,\zo)$ with $\norm{\eta} =1$ and $x = \xo + \lambda
  \eta_x$, $z = \zo + \lambda \eta_z$, and $\lambda \in
  [0,\lambda_M)$.  Let $2 \xi \in \Lie_{\SD}d_{\SS}^2(x,z)$ denote
  \begin{equation}\label{eq:xiexpression}
    \xi = - (x - \xo)^\top \gradient_x F(x,z) + (z - \zo)^\top
    \gradient_z F(x,z).
  \end{equation}
  Writing
  \begin{align*}
    \gradient_x F(x,z) & = \gradient_x F(x,\zo)
    + \int_0^1 \gradient_{zx} F(x,\zo+t(z-\zo)) (z - \zo) dt,
    \\
    \gradient_z F(x,z) & = \gradient_z F(\xo, z)
     + \int_0^1 \gradient_{xz} F(\xo+t(x-\xo),z) (x - \xo)dt,
  \end{align*} 
  and substituting in~\eqref{eq:xiexpression} we get 
  \begin{align}
    \xi & = - (x - \xo)^\top \gradient_x F(x,\zo) + (z - \zo)^\top
    \gradient_z F(\xo,z) + (z-\zo)^\top M
    (x-\xo), \label{eq:xiexpression2}
  \end{align} 
  where $M = \int_0^1 (\gradient_{xz} F(\xo+t(x-\xo),z) -
  \gradient_{xz} F(x,\zo+t(z-\zo))) dt$.  Using the convexity and
  concavity along the proximal normal and applying the
  bounds~\eqref{eq:Fproxbounds}, we obtain
  \begin{subequations}\label{eq:Fproxconvexbounds}
    \begin{align}
      - (x - \xo)^\top \gradient_x F(x,\zo) & \le F(\xo,\zo) - F(x,\zo)
      \le -k_1 \norm{\lambda \eta_x}^{\alpha_1},
      \\
      (z - \zo)^\top \gradient_z F(\xo,z) & \le F(\xo,z) - F(\xo,\zo)
       \le -k_2 \norm{\lambda \eta_z}^{\beta_1}.
    \end{align}
  \end{subequations}
  On the other hand, using~\eqref{eq:Hessian-F-variation}, we bound
  $M$ by
  \begin{equation}\label{eq:Mbound}
    \norm{M} \le L_x \norm{\lambda \eta_x}^{\alpha_2} + L_z
    \norm{\lambda \eta_z}^{\beta_2}.
  \end{equation}
  Using~\eqref{eq:Fproxconvexbounds} and~\eqref{eq:Mbound}
  in~\eqref{eq:xiexpression2}, and rearranging the terms yields
  \begin{align*} 
    \xi & \le \bigl(-k_1 \norm{\lambda \eta_x}^{\alpha_1} + L_x
    \norm{\lambda \eta_x}^{\alpha_2 +1} \norm{\lambda \eta_z} \bigr)
    + \bigl(- k_2 \norm{\lambda \eta_z}^{\beta_1} + L_z
    \norm{\lambda \eta_z}^{\beta_2 +1} \norm{\lambda \eta_x} \bigr).
  \end{align*}
  If $L_x = 0$, then the first parenthesis is negative whenever
  $\lambda \eta_x \not = 0$ (i.e., $x \not = \xo$). If $L_x \not = 0$
  and $\alpha_1 \le \alpha_2 + 1$, then for $\norm{\lambda \eta_x} <
  1$ and $\norm{\lambda \eta_z} < \min(1,k_1/L_x)$, the first
  parenthesis is negative whenever $\lambda \eta_x \not = 0$.
  Analogously, the second parenthesis is negative for $z \not = \zo$
  if either $L_z = 0$ or $\beta_1 \le \beta_2 +1$ with $\norm{\lambda
    \eta_z} < 1$ and $\norm{\lambda \eta_x} < \min(1,k_2/L_z)$.  Thus,
  if $\lambda < \min \{1,k_1/L_x,k_2/L_z\}$ (excluding from the $\min$
  operation the elements that are not well defined due to the
  denominator being zero), then
  hypotheses~\ref{as:condition-one}-\ref{as:condition-two} imply
  that $\xi < 0$ whenever $(x,z) \not = (\xo,\zo)$. Moreover, since
  $(\xo,\zo) \in \proj_{\SS}(x,z)$ was chosen arbitrarily, we conclude
  that $\max{\Lie_{\SD} d_{\SS}^2(x,z)} < 0$ for all $(x,z) \in \SS +
  B_{\bar{\lambda}}(0)$ where $\bar{\lambda} \in (0,\lambda_M]$
  satisfies $\bar{\lambda} < \min \{1,k_1/L_x,k_2/L_z\}$. This proves
  the local asymptotic stability. Finally, convergence to a point
  follows from Lemma~\ref{le:convtopoint} and global convergence
  follows from the analysis done above.
\end{proof}

Intuitively, the hypotheses of
Proposition~\ref{pr:localsaddlesetconv4} imply that along the proximal
normal to the saddle set, the convexity (resp.  concavity) in the
$x$-coordinate (resp. $z$-coordinate) is `stronger' than the influence
of the $x$- and $z$-dynamics on each other, represented by the
off-diagonal Hessian terms. When this coupling is absent (i.e.,
$\gradient_{xz} F \equiv 0$), the $x$- and $z$-dynamics are
independent of each other and they function as individually aiming to
minimize (resp. maximize) a function of one variable, thereby,
reaching a saddle point.  Note that the assumptions of
Proposition~\ref{pr:localsaddlesetconv4} do not imply that $F$ is
locally convex-concave. As an example, the function
in~\eqref{eq:asympex} is not convex-concave in any neighborhood of any
saddle point but we show next that it satisfies the assumptions of
Proposition~\ref{pr:localsaddlesetconv4}, establishing local
asymptotic convergence of the respective saddle-point dynamics.

\begin{example}\longthmtitle{Convergence analysis via  proximal
    normals}\label{ex:proxnormal} 
  {\rm Consider the function $F$ defined in~\eqref{eq:asympex}.
    Consider a saddle point $(\xo,\zo)= ( \cos \theta , \sin \theta,
    0) \in \saddleset{F}$, where $\theta \in [0,2\pi)$.  Let
  \begin{align*}
    \eta = (\eta_x,\eta_z) = ((a_1 \cos \theta, a_1 \sin \theta), a_2)
    ,
  \end{align*}
  with $a_1, a_2 \in \real$ and $a_1^2 + a_2^2 = 1$, be a proximal
  normal to $\saddleset{F}$ at $(\xo,\zo)$.  Note that the function
  $\lambda \mapsto F(\xo+\lambda \eta_x,\zo) = (\lambda a_1 )^4$ is
  convex, satisfying~\eqref{eq:Fproxbounds-convex} with $k_1 =1$ and
  $\alpha_1 = 4$. The function $\lambda \mapsto F(\xo, \zo+\lambda
  \eta_z) = -(\lambda a_2)^2$ is concave,
  satisfying~\eqref{eq:Fproxbounds-concave} with $k_2 =1$, $\beta_1 =
  2$. Also, given any $\lambda_M>0$ and for all $t \in [0,1]$, we can write
  \begin{align*}
    & \norm{\gradient_{xz} F(\xo+t\lambda \eta_x,\zo+\lambda \eta_z) -
      \gradient_{xz} F(\xo+\lambda \eta_x, \zo+t\lambda \eta_z)}
    \\
    & \qquad = \norm{-4(\lambda a_2) (1+t\lambda a_1)
      \left(\begin{smallmatrix} \cos \theta \\ \sin
          \theta \end{smallmatrix}\right) + 4 (t\lambda a_2)
      (1+\lambda a_1) \left(\begin{smallmatrix} \cos \theta \\ \sin
          \theta \end{smallmatrix}\right) },
    \\
    & \qquad \le \norm{4 (\lambda a_2) (1+t\lambda a_1) - 4(t \lambda
      a_2)(1+\lambda a_1)},
    \\
    & \qquad \le 8 (1+\lambda a_1) (\lambda a_2) \le L_z (\lambda
    a_2),
  \end{align*}
  for $\lambda \le \lambda_M$, where $L_z = 8 (1+\lambda_M a_1)$. This
  implies that $L_x = 0$, $L_z \not = 0$ and $\beta_2 = 1$.  Therefore,
  hypotheses (i)-(iii) of Proposition~\ref{pr:localsaddlesetconv4}
  are satisfied and this establishes asymptotic convergence of the
  saddle-point dynamics.  Figure~\ref{fig:add-case} illustrates this
  fact. Note that since $L_z \not = 0$, we cannot guarantee global
  convergence. } \oprocend
  \begin{figure}[htb!]
    \centering
    \subfigure[$(x,z)$]{\includegraphics[width=.44\linewidth]{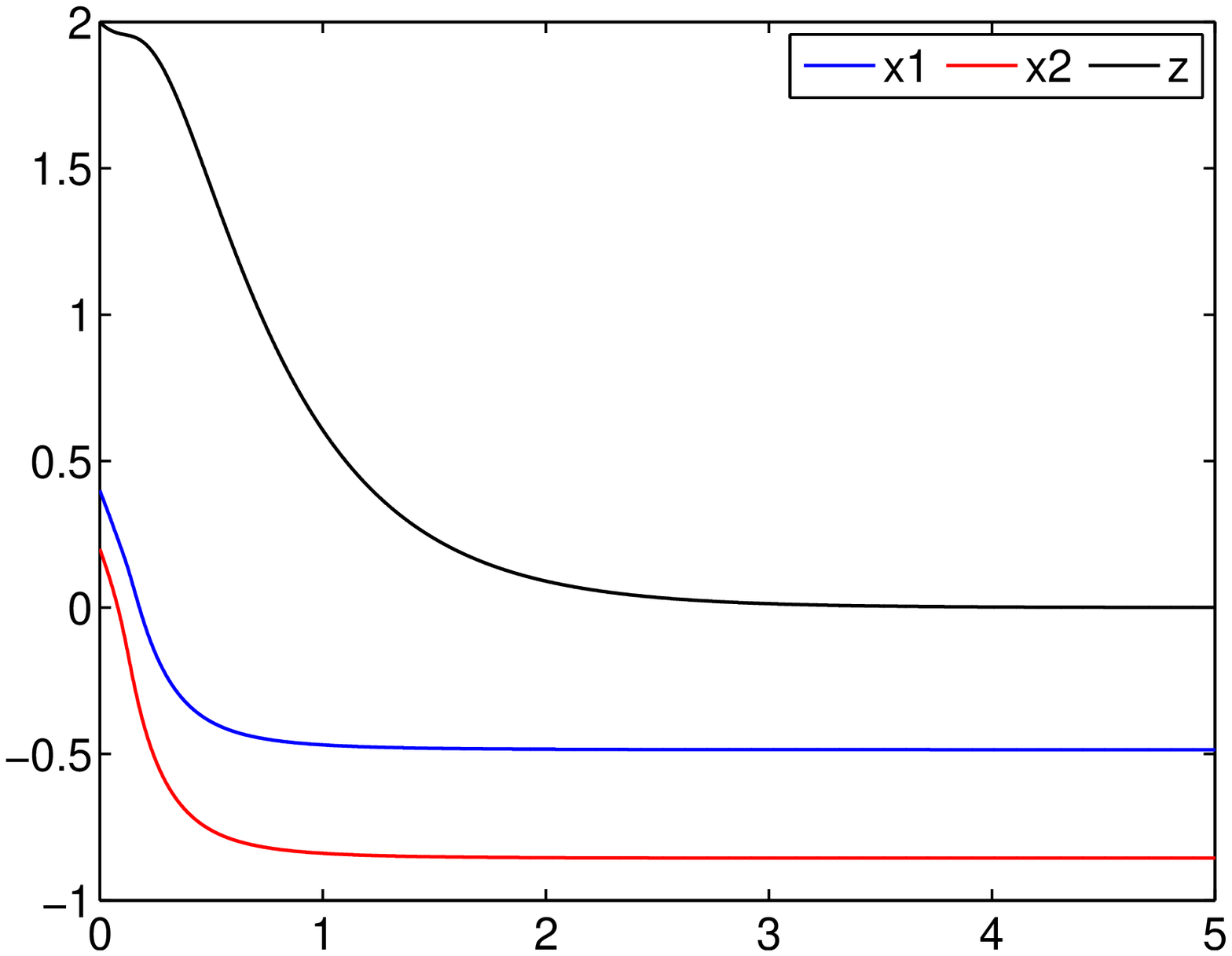}}
    \quad
    \subfigure[$F$]{\includegraphics[width=.44\linewidth]{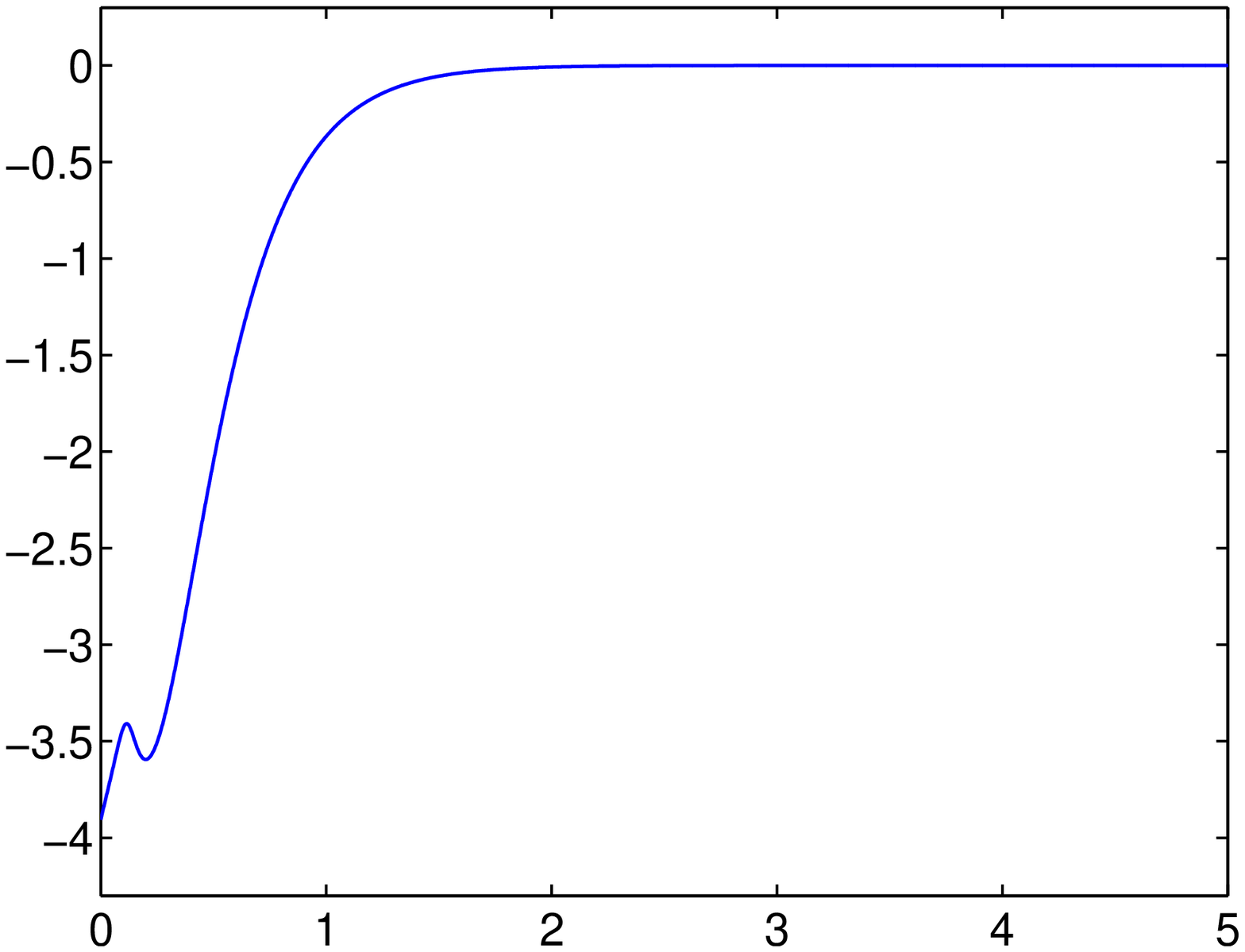}}
    \caption{(a) Trajectory of the saddle-point dynamics for the
      function defined by~\eqref{eq:asympex}. The initial condition is
      $(x,z) = (0.1,0.2,4)$.  The trajectory converges to
      $(0.49,0.86,0) \in \saddleset{F}$. (b) Evolution of the
      function~$F$ along the trajectory.  The value converges to $0$,
      the value that the function takes on its saddle
      set. }\label{fig:add-case}
    \vspace*{-1ex}
  \end{figure}
\end{example}

Interestingly, Propositions~\ref{pr:localsaddlesetconv3}
and~\ref{pr:localsaddlesetconv4} complement each other. The
function~\eqref{eq:asympex} satisfies the hypotheses of
Proposition~\ref{pr:localsaddlesetconv4} but not those of
Proposition~\ref{pr:localsaddlesetconv3}. Conversely, the Lagrangian
of the constrained optimization~\eqref{eq:nonconvexopt} satisfies the
hypotheses of Proposition~\ref{pr:localsaddlesetconv3} but not those
of Proposition~\ref{pr:localsaddlesetconv4}.

In the next result, we consider yet another scenario where the saddle
function might not be convex-concave in its arguments but the
saddle-point dynamics converges to the set of equilibrium points. As a
motivation, consider the function $\map{F}{\real \times
  \real}{\real}$,
$F(x,z) = xz^2$.
The set of saddle points of $F$ are $\saddleset{F} = \realnonpositive
\times \{0\}$. One can show that, at the saddle point $(0,0)$, neither
the hypotheses of Proposition~\ref{pr:localsaddlesetconv3} nor those
of Proposition~\ref{pr:localsaddlesetconv4} are satisfied.  Yet,
simulations show that the trajectories of the dynamics converge to the
saddle points from almost all initial conditions in $\real^2$, see
Figure~\ref{fig:nonconvex-linear-case} below. This asymptotic behavior
can be characterized through the following result which
generalizes~\cite[Theorem~3]{VAK-YSP:87}.

\begin{proposition}\longthmtitle{Global asymptotic stability of
    equilibria of saddle-point dynamics for saddle functions linear in
    one argument}\label{pr:nonconvex-linear-x}
  For $\map{F}{\real^n \times \real^m}{\real}$, assume the following
  form $F(x,z) = g(z)^\top x$, where $\map{g}{\real^m}{\real^n}$ is
  $\CC^1$.  Assume that there exists $(\xo,\zo) \in \saddleset{F}$
  such that
  \begin{enumerate}
  \item $F(\xo,\zo) \ge F(\xo,z)$ for all $z \in
    \real^m$, 
    \label{as:nonconvex-linear-1}
  \item for any $z \in \real^m$, the condition $g(z)^\top \xo = 0$
    implies $g(z) = 0$, \label{as:nonconvex-linear-2}
  \item any trajectory of $\SD$ is bounded.
    \label{as:nonconvex-linear-3}
  \end{enumerate}
  Then, all trajectories of the saddle-point dynamics $\SD$ converge
  asymptotically to the set of equilibrium points of $\SD$.
\end{proposition}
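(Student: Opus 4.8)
The plan is to run a LaSalle-type invariance argument built on a deliberately \emph{linear} certificate. For $F(x,z)=g(z)^\top x$ the dynamics read
\begin{align*}
  \dot x = -\gradient_x F(x,z) = -g(z), \qquad \dot z = \gradient_z F(x,z) = Dg(z)^\top x .
\end{align*}
Since $(\xo,\zo)\in\saddleset{F}$ is in particular a critical point, $\gradient_x F(\xo,\zo)=g(\zo)=0$, hence $F(\xo,\zo)=g(\zo)^\top\xo=0$. Thus hypothesis~\ref{as:nonconvex-linear-1} reads $g(z)^\top\xo=F(\xo,z)\le F(\xo,\zo)=0$ for all $z\in\real^m$. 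This motivates the candidate $V(x,z)=-\xo^\top x$, whose Lie derivative along $\SD$ collapses to exactly this controlled scalar:
\begin{align*}
  \Lie_{\SD} V(x,z) = -\xo^\top\dot x = \xo^\top g(z) = F(\xo,z) \le 0 .
\end{align*}

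I would then bring in boundedness. The function $V$ is only linear, hence not radially unbounded, but hypothesis~\ref{as:nonconvex-linear-3} confines each trajectory to a compact set on which $V$ is bounded; being nonincreasing it converges, so the (nonempty, compact, invariant) $\omega$-limit set $\Omega$ of the trajectory lies in the largest invariant set $M$ contained in $\setdef{(x,z)}{\Lie_{\SD}V(x,z)=0}=\setdef{(x,z)}{g(z)^\top\xo=0}$. On this set hypothesis~\ref{as:nonconvex-linear-2} forces $g(z)=0$, and therefore $\dot x=-g(z)=0$ throughout $M$.

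It remains to rule out $z$-motion on $M$, and this is the single step that truly uses invariance rather than just the level set. Fix a point of $M$ and follow its whole trajectory, which stays in $M$; along it $g(z(t))\equiv 0$, so differentiating gives
\begin{align*}
  0 = \frac{d}{dt}\,g(z(t)) = Dg(z(t))\,\dot z(t) = Dg(z(t))\,Dg(z(t))^\top x(t).
\end{align*}
Left-multiplying by $x(t)^\top$ yields $\norm{Dg(z(t))^\top x(t)}^2=0$, i.e. $\dot z=0$ on $M$. Hence $M\subset\Eq{\SD}$, and since every trajectory is bounded and converges to its $\omega$-limit set $\Omega\subset M$, all trajectories approach $\Eq{\SD}$, as claimed.

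The conceptual obstacle is not computation but the \emph{choice} of certificate: the obvious quadratic $\tfrac12\norm{x-\xo}^2$ does not produce a sign-definite derivative here, whereas the linear $V=-\xo^\top x$ does, precisely because $F$ is linear in $x$ and so $\Lie_{\SD}V$ reduces to the scalar $F(\xo,z)$ that~\ref{as:nonconvex-linear-1} controls. The two places requiring care are the invariance principle itself---since $g$ is only $\CC^1$ I would either verify well-posedness of the flow or phrase the limit-set argument intrinsically---and the final differentiation, where the positive-semidefinite Gram structure $Dg\,Dg^\top$ is what upgrades the scalar identity $g(z)\equiv 0$ on $M$ to the full conclusion $\dot z=0$.
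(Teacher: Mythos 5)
Your proposal is correct and follows essentially the same route as the paper's proof: the same linear certificate $V(x,z)=-\xo^\top x$, the same LaSalle invariance argument using hypothesis (iii) for boundedness, and the same final step of differentiating $g(z(t))\equiv 0$ along an invariant trajectory and using the Gram structure $Dg\,Dg^\top$ to conclude $\dot z=0$.
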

\begin{proof}
  Consider the function $\map{V}{\real^n \times \real^m}{\real}$,
  \begin{equation*}
    V(x,z) = -\xo^\top x.
  \end{equation*}
  The Lie derivative of $V$ along the saddle-point dynamics $\SD$ is  
  \begin{align}\label{eq:nonconvex-linear-evol-v}
    \Lie_{\SD}V(x,z) =  \xo^\top \gradient_x F(x,z) = \xo^\top g(z) =
    F(\xo,z) \le F(\xo,\zo) = 0,
  \end{align}
  where in the inequality we have used
  assumption~\ref{as:nonconvex-linear-1}, and $F(\xo,\zo) = 0$ is
  implied by the definition of the saddle point, that is, $\gradient_x
  F(\xo,\zo) = g(\zo) = 0$.  Now consider any trajectory $t \mapsto
  (x(t),z(t))$, $(x(0),z(0)) \in \real^n \times \real^m$ of
  $\SD$. Since the trajectory is bounded by
  assumption~\ref{as:nonconvex-linear-3}, the application of the
  LaSalle Invariance Principle~\cite[Theorem 4.4]{HKK:02} yields that
  the trajectory converges to the largest invariant set $\MM$
  contained in $\setdef{(x,z) \in \real^n \times
    \real^m}{\Lie_{\SD}V(x,z) = 0}$, which
  from~\eqref{eq:nonconvex-linear-evol-v} is equal to the set
  $\setdef{(x,z) \in \real^n \times \real^m}{F(\xo,z) = 0}$. Let
  $(x,z) \in \MM$. Then, we have $F(\xo,z) = g(z)^\top \xo = 0$ and by
  hypotheses~\ref{as:nonconvex-linear-2} we get $g(z) = 0$. Therefore,
  if $(x,z) \in \MM$ then $g(z) = 0$.  Consider the trajectory $t
  \mapsto (x(t),z(t))$ of $\SD$ with $(x(0),z(0)) = (x,z)$ which is
  contained in $\MM$. Then, along the trajectory we have
  \begin{align*}
    \dot x(t) = - \gradient_x F(x(t),z(t)) = - g(z(t)) = 0
  \end{align*}
  Further, note that along this trajectory we have
  $g(z(t)) = 0$ for all $t \ge 0$. Thus, $\frac{d}{dt} g(z(t))
  = 0$ for all $t \ge 0$, which implies that
  \begin{align*}
    \frac{d}{dt} g(z(t)) = Dg(z(t)) \dot z(t) = Dg(z(t)) Dg(z(t))^\top x =
    0.
  \end{align*}
  From the above expression we deduce that $\dot z (t) = Dg(z(t))^\top x = 0$.
  This can be seen from the
  fact that $Dg(z(t)) Dg(z(t))^\top x = 0$ implies $x^\top Dg(z(t))
  Dg(z(t))^\top x = (Dg(z(t))^\top x)^2 = 0$.
  From the above reasoning, we conclude that $(x,z)$ is an
  equilibrium point of $\SD$.  
\end{proof}

The proof of Proposition~\ref{pr:nonconvex-linear-x} hints at the fact
that hypothesis (ii) can be omitted if information about other saddle
points of $F$ is known. Specifically, consider the case where $n$
saddle points $(\xo^{(1)},\zo^{(1)}), \dots , (\xo^{(n)},\zo^{(n)})$
of $F$ exist, each satisfying hypothesis (i) of
Proposition~\ref{pr:nonconvex-linear-x} and such that the vectors
$\xo^{(1)}, \dots, \xo^{(n)}$ are linearly independent. In this
scenario, for those points $z \in \real^m$ such that $g(z)^\top
\xo^{(i)} = 0$ for all $i \in \until{n}$ (as would be obtained in the
proof), the linear independence of $\xo^{(i)}$'s already implies that
$g(z) = 0$, making hypothesis (ii) unnecessary.

\begin{corollary}\longthmtitle{Almost global asymptotic stability of
    saddle points for saddle functions linear in
     one  argument}\label{cr:nonconvex-linear-x}
   If, in addition to the hypotheses of
   Proposition~\ref{pr:nonconvex-linear-x}, the set of equilibria of
   $\SD$ other than those belonging to $\saddleset{F}$ are unstable,
   then the trajectories of $\SD$ converge asymptotically to
   $\saddleset{F}$ from almost all initial conditions (all but the
   unstable equilibria). Moreover, if each point in $\saddleset{F}$ is
   stable under~$\SD$, then $\saddleset{F}$ is almost globally
   asymptotically stable under the saddle-point dynamics~$\SD$ and the
   trajectories converge to a point in $\saddleset{F}$.
\end{corollary}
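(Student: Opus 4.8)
The plan is to build directly on Proposition~\ref{pr:nonconvex-linear-x}, which already guarantees that every trajectory of $\SD$ converges to the equilibrium set $\Eq{\SD}$, and to upgrade this to convergence to $\saddleset{F}$ by discarding a negligible set of initial conditions. First I would decompose $\Eq{\SD} = \saddleset{F} \cup (\Eq{\SD} \setminus \saddleset{F})$, where by hypothesis every point of the second piece is an unstable equilibrium. The key observation is that a trajectory can fail to converge to $\saddleset{F}$ only if its $\omega$-limit set meets $\Eq{\SD} \setminus \saddleset{F}$; hence the exceptional set of initial conditions is contained in the union of the stable sets (regions of attraction) of the unstable equilibria.

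Second, I would argue that this union has Lebesgue measure zero. For a single unstable equilibrium $p$, instability forces at least one direction of the relevant linearization to have positive real part, so the local stable set of $p$ lies in a center-stable manifold of dimension strictly less than $n+m$ and is therefore null; this is precisely the type of center-manifold reasoning the paper already deploys in Proposition~\ref{pr:localsaddlesetconv3} through Proposition~\ref{pr:convtoman-cl}. To pass from a single equilibrium to the possibly continuum of unstable equilibria, I would organize their transverse stable sets into a foliation over the unstable equilibrium set and invoke a Fubini-type argument to conclude that the total stable set remains null. Consequently, from every initial condition outside this null set the trajectory converges to $\saddleset{F}$, yielding the almost-global attractivity claimed in the first part.

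Third, for the \emph{moreover} statement I would add the hypothesis that each point of $\saddleset{F}$ is stable under $\SD$. Lyapunov stability of $\saddleset{F}$ as a set then follows from stability of its individual points, and combined with the almost-global attractivity already established this gives almost-global asymptotic stability. Finally, since each trajectory in the full-measure set converges to the set $\saddleset{F}$ and every point of $\saddleset{F}$ is stable, Lemma~\ref{le:convtopoint} upgrades set convergence to convergence to a single point of $\saddleset{F}$.

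I expect the main obstacle to be the measure-zero claim for the stable set of the unstable equilibria when these form a continuum rather than a discrete collection: the clean stable-manifold statement applies equilibrium-by-equilibrium, and assembling the uncountable union into a null set requires a structured (foliated, Fubini-style) argument together with enough nondegeneracy for the transverse stable directions to vary measurably. A secondary subtlety is reconciling the precise meaning of ``almost all initial conditions (all but the unstable equilibria)'': the literal reading holds only when the transverse stable manifolds of the unstable equilibria are trivial, as happens in the motivating example $F(x,z)=xz^2$, where the linearization at each unstable equilibrium has a positive eigenvalue and no stable direction transverse to $\Eq{\SD}$; in general I would either extract such triviality from the special structure $F(x,z)=g(z)^\top x$ or state the conclusion purely in the measure-theoretic sense.
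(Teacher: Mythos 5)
The paper offers no proof of this corollary at all: it is stated as an immediate consequence of Proposition~\ref{pr:nonconvex-linear-x}, the implicit argument being ``every trajectory converges to the set of equilibria of $\SD$, the equilibria outside $\saddleset{F}$ are unstable, hence excluded.'' Your proposal is therefore not a restatement of the paper's argument but an attempt to make that one-liner rigorous, and in doing so you have put your finger on exactly the right issue: Lyapunov instability of an equilibrium does \emph{not} prevent nontrivial trajectories from converging to it (think of a linear saddle), so the parenthetical ``all but the unstable equilibria'' does not follow from instability alone. Your decomposition of $\mathrm{Eq}(\SD)$ and your reduction of the exceptional set to (a superset of) the stable sets of the non-saddle equilibria is the correct first move, and your treatment of the ``moreover'' clause via Lemma~\ref{le:convtopoint} is exactly what the paper intends.

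The gap is that your central step --- nullity of the union of stable sets --- is announced rather than proved, and the tools you invoke are not available under the stated hypotheses. First, Proposition~\ref{pr:nonconvex-linear-x} only assumes $g\in\CC^1$, so the $z$-component of $\SD$ is $Dg(z)^\top x$ and the vector field is merely continuous; linearization at the unstable equilibria, and hence any stable/center-stable manifold theorem, requires strengthening the smoothness assumptions. Second, the non-saddle equilibria typically form a continuum, so each is non-hyperbolic (zero eigenvalues along the equilibrium set); the pointwise stable-manifold statement does not apply, and the foliated Fubini argument you sketch is a substantial piece of work that would itself need nondegeneracy hypotheses (e.g., a uniformly hyperbolic transverse spectrum) not present in the corollary. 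Third, a trajectory can fail to converge to $\saddleset{F}$ merely because its $\omega$-limit set \emph{meets} $\mathrm{Eq}(\SD)\setminus\saddleset{F}$ while also meeting $\saddleset{F}$; this exceptional set is a priori larger than the union of basins of attraction of individual unstable equilibria, so your containment needs the weak stable sets, not the stable sets. You partially acknowledge all of this in your closing paragraph, and your observation that in the motivating example $F(x,z)=xz^2$ the transverse spectrum at each unstable equilibrium is purely unstable (so no nontrivial trajectory converges to it, and the literal statement holds) is the honest way to read the corollary: as stated it is justified only when such triviality of the transverse stable directions can be verified, either from the structure $F(x,z)=g(z)^\top x$ or directly as in the example. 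In short, your route is more careful than the paper's, but to stand as a proof it must either add hypotheses that make the invariant-manifold machinery applicable or replace the measure-zero claim with a direct verification in the cases of interest.
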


Next, we illustrate how the above result can be applied to the
motivating example given before
Proposition~\ref{pr:nonconvex-linear-x} to infer almost global
convergence of the trajectories.

\begin{example}\longthmtitle{Convergence for saddle functions linear
    in  one
    argument}\label{ex:nonconvex-linear} {\rm Consider again $F(x,z) =
    xz^2$ with $\saddleset{F} = \setdef{(x,z) \in \real \times
      \real}{x \le 0 \text{ and } z =0}$.  Pick $(\xo,\zo) =
    (-1,0)$. One can verify that this saddle point satisfies the
    hypotheses~\ref{as:nonconvex-linear-1}
    and~\ref{as:nonconvex-linear-2} of
    Proposition~\ref{pr:nonconvex-linear-x}. Moreover, along any
    trajectory of the saddle-point dynamics for $F$, the function $x^2
    + \frac{z^2}{2}$ is preserved, which implies that all trajectories
    are bounded. One  can also see  that the equilibria of the
    saddle-point dynamics that are not saddle points, that is the set
    $\realpositive \times \{0\}$,
    are unstable.  Therefore, from
    Corollary~\ref{cr:nonconvex-linear-x}, we conclude that the
    trajectories of the saddle-point dynamics asymptotically converge
    to the set of saddle points from almost all initial conditions. 
    Figure~\ref{fig:nonconvex-linear-case} illustrates these
    observations.
   \begin{figure}[htb!]
    \centering
    \subfigure[$(x,z)$]{\includegraphics[width=.3\linewidth]{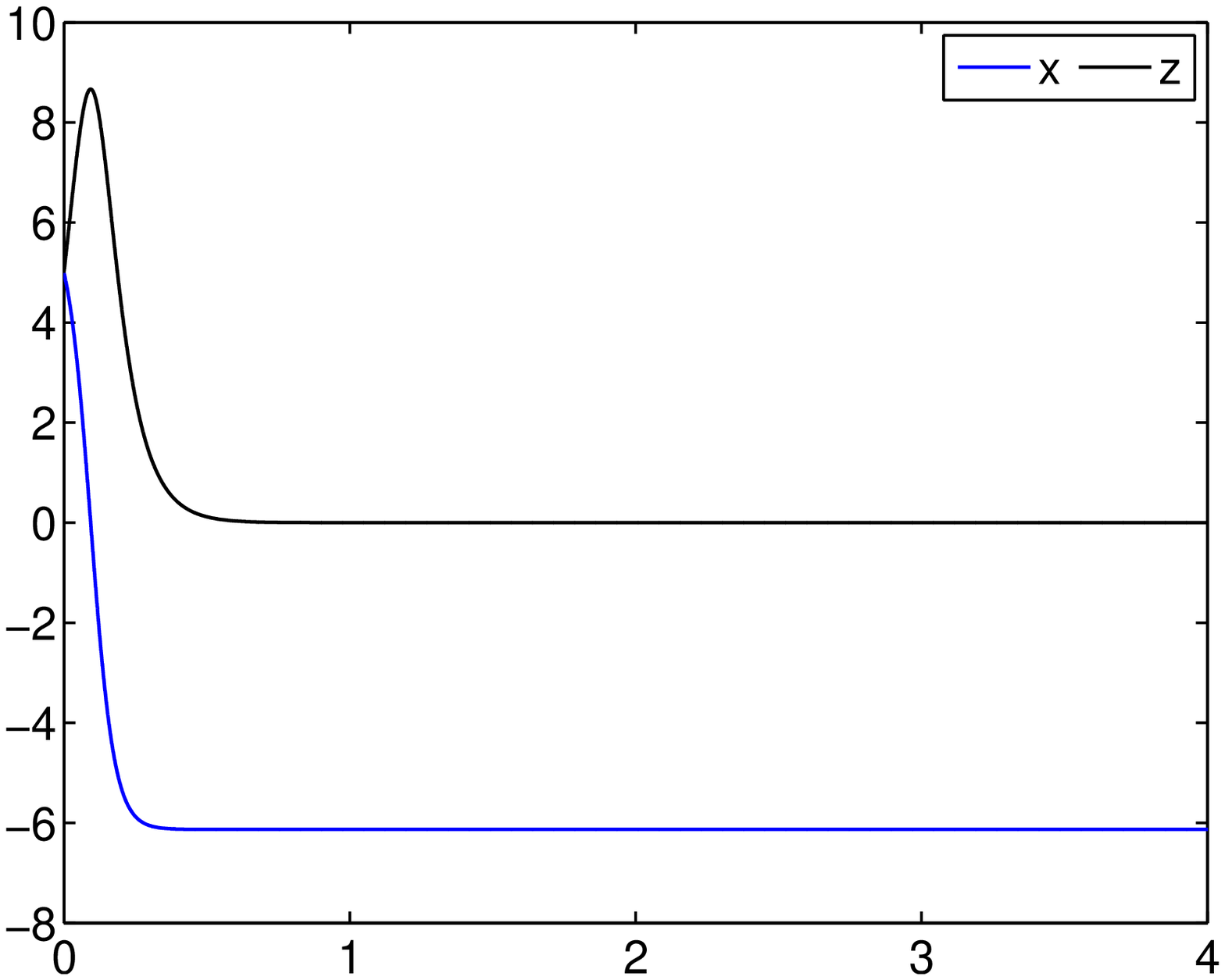}}
    \quad
    \subfigure[$F$]{\includegraphics[width=.3\linewidth]{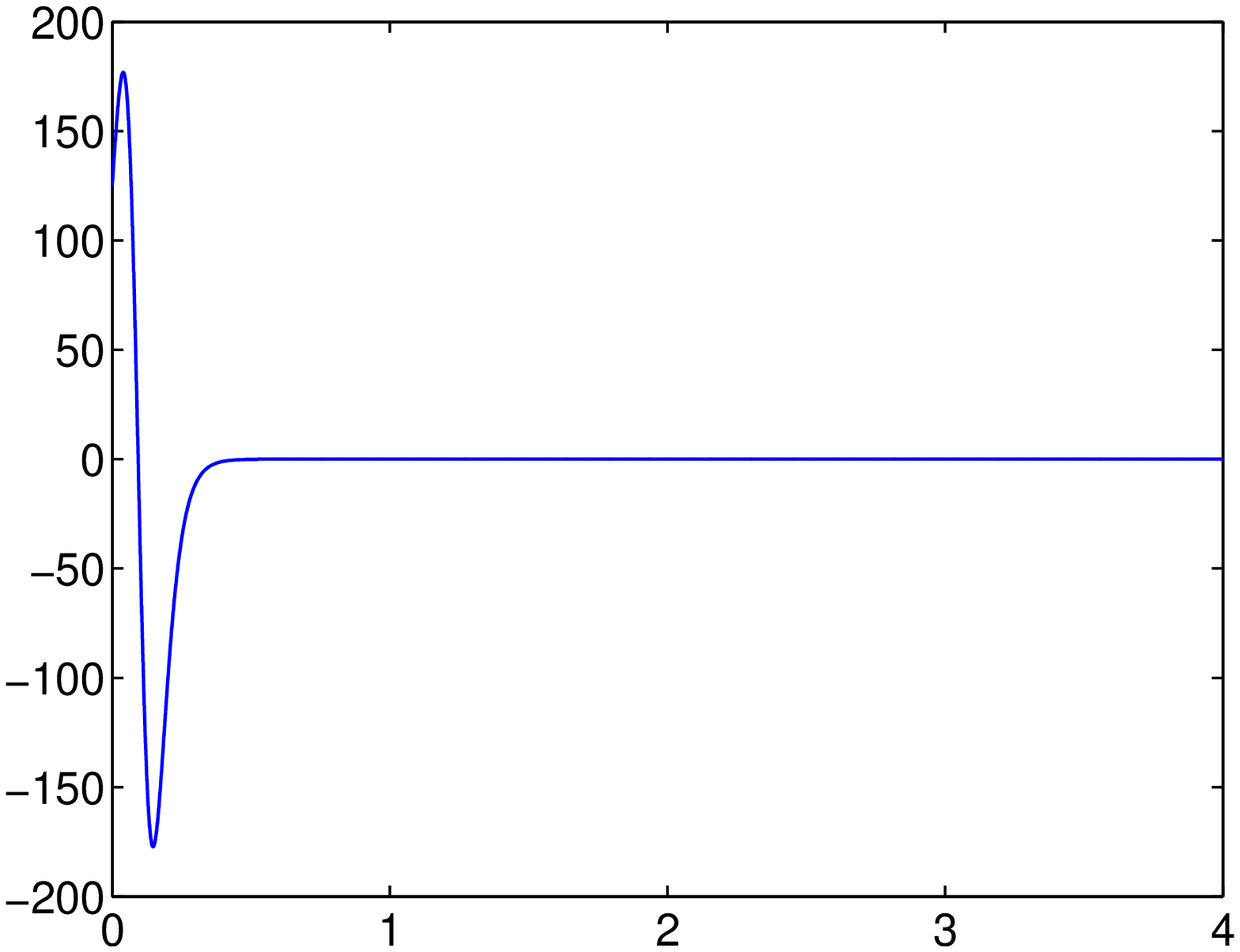}}
    \quad
    \subfigure[Vector field
    $\SD$]{\includegraphics[width=.25\linewidth]{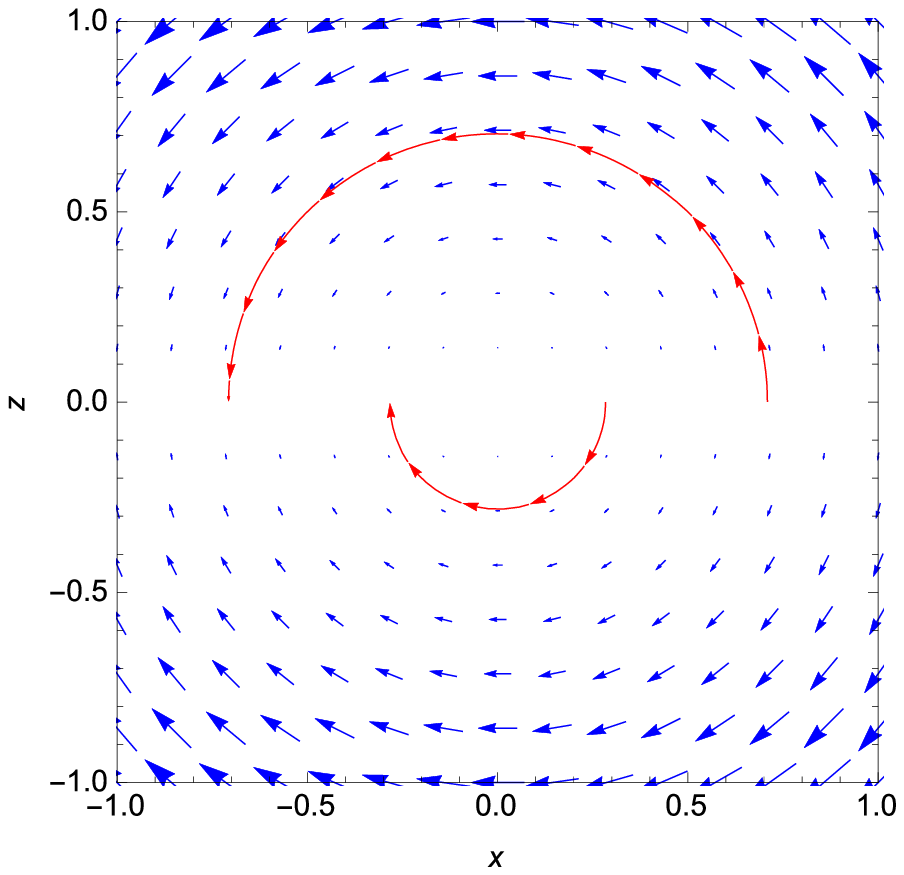}}
    \caption{(a) Trajectory of the saddle-point dynamics for the
      function $F(x,z) = xz^2$. The initial condition is
      $(x,z) = (5,5)$.  The trajectory converges to
      $(-6.13,0) \in \saddleset{F}$. (b) Evolution of the
      function~$F$ along the trajectory.  The value converges to $0$,
      the value that the function takes on its saddle
      set. (c) The vector field $\SD$, depicting that the set of saddle
      points are attractive while the other equilibrium points
      $\realpositive \times \{0\}$
      are unstable.}\label{fig:nonconvex-linear-case}
    \vspace*{-1ex}
  \end{figure}
}
  \oprocend
\end{example}

\section{Conclusions}\label{sec:conclusions}

We have studied the asymptotic stability of the saddle-point dynamics
associated to a continuously differentiable function. We have identified
a set of complementary conditions under which the trajectories of the
dynamics are proved to converge to the set of saddle points of the
saddle function and, wherever feasible, we have also established global
stability guarantees and convergence to a point in the set. Our first
class of convergence results is based on the convexity-concavity
properties of the saddle function.  In the absence of these properties,
our second class of results explore, respectively, the existence of
convergence guarantees using linearization techniques, the properties of
the saddle function along proximal normals to the set of saddle points,
and the linearity properties of the saddle function in one variable. For
the linearization result, borrowing ideas from center manifold theory,
we have established a general stability result of a manifold of
equilibria for a piecewise twice continuously differentiable vector
field.  Several examples throughout the paper highlight the connections
among the results and illustrate their applicability, in particular, for
finding the primal-dual solutions of constrained optimization problems.
Future work will study the robustness properties of the dynamics against
disturbances, investigate the characterization of the rate of
convergence, generalize the results to the case of nonsmooth functions
(where the associated saddle-point dynamics takes the form of a
differential inclusion involving the generalized gradient of the
function), and explore the application to optimization problems with
inequality constraints. We also plan to build on our results to
synthesize distributed algorithmic solutions for various networked
optimization problems in power networks.

\section{Acknowledgements}
Ashish Cherukuri and Jorge Corte\'{e}s's research was partially
supported by NSF award ECCS-1307176. Bahman Gharesifard's research was
supported by an NSERC Discovery grant.

\section*{Appendix}
\renewcommand{\theequation}{A.\arabic{equation}}
\renewcommand{\thetheorem}{A.\arabic{theorem}}

This section contains some auxiliary results for our convergence
analysis in Sections~\ref{sec:case1} and~\ref{sec:case2}. Our
  first result establishes the constant value of the saddle function
  over its set of (local) saddle points.

\begin{lemma}\longthmtitle{Constant function value over saddle 
    points}\label{le:Fconstant}
  For $\map{F}{\real^n \times \real^m}{\real}$ continuously
  differentiable, let $\SS \subset \saddleset{F}$ be a path connected
  set. If $F$ is locally convex-concave on $\SS$, then $F_{|\SS}$ is
  constant.
\end{lemma}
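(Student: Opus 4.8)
The plan is to first prove a local version of the claim---that any two saddle points lying in a common, suitably chosen neighborhood share the same value of $F$---and then to globalize it using the path-connectedness of $\SS$.

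For the local step, fix $(\xt,\zt)\in\SS$ and let $\UU$ be the neighborhood on which $F$ is locally convex-concave; shrinking $\UU$ if necessary, I take it to be an open ball centered at $(\xt,\zt)$, so that $\UU$ is convex and, crucially, the two ``corner'' points $(\xb,\zt)$ and $(\xt,\zb)$ belong to $\UU$ whenever $(\xb,\zb)\in\UU$ (since $\norm{\xb-\xt}$ and $\norm{\zb-\zt}$ are each bounded by $\norm{(\xb,\zb)-(\xt,\zt)}$). Now let $(\xb,\zb)\in\SS\cap\UU$ be any other saddle point. The argument exploits that at a saddle point the partial gradients vanish: on the convex slice $\UU\cap(\real^n\times\{\zt\})$ the map $x\mapsto F(x,\zt)$ is convex with a critical point at $\xt$, hence $\xt$ is its global minimizer over this slice, giving $F(\xb,\zt)\ge F(\xt,\zt)$; symmetrically, convexity of $x\mapsto F(x,\zb)$ with critical point $\xb$ gives $F(\xt,\zb)\ge F(\xb,\zb)$, while concavity of $z\mapsto F(\xt,z)$ and of $z\mapsto F(\xb,z)$, with critical points $\zt$ and $\zb$ respectively, gives $F(\xt,\zb)\le F(\xt,\zt)$ and $F(\xb,\zt)\le F(\xb,\zb)$. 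Chaining these four inequalities yields $F(\xt,\zt)\le F(\xb,\zt)\le F(\xb,\zb)$ and $F(\xb,\zb)\le F(\xt,\zb)\le F(\xt,\zt)$, which pinch the two values against each other and force $F(\xt,\zt)=F(\xb,\zb)$. Thus $F$ equals $F(\xt,\zt)$ at every saddle point of $\SS$ in $\UU$; in other words, $F_{|\SS}$ is locally constant.

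To globalize, given $a,b\in\SS$ I take a continuous path $\gamma\colon[0,1]\to\SS$ with $\gamma(0)=a$ and $\gamma(1)=b$, and consider $\phi=F\circ\gamma$. By the local step together with the continuity of $\gamma$, each $s\in[0,1]$ has a neighborhood on which $\phi$ is constant, so $\phi$ is locally constant on the connected interval $[0,1]$ and therefore constant; hence $F(a)=\phi(0)=\phi(1)=F(b)$. Since $a,b$ are arbitrary, $F_{|\SS}$ is constant.

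I expect the main obstacle to be the local step---specifically, arranging the geometry so that the convexity/concavity hypotheses apply exactly at the corner points $(\xb,\zt)$ and $(\xt,\zb)$ (which is precisely why a ball, rather than a general neighborhood, is used), and correctly orienting the four one-variable inequalities so that they sandwich the two saddle values. Everything else---the passage from ``critical point of a convex function on a convex set'' to ``global minimizer,'' and the locally-constant-implies-constant globalization---is routine once this geometry is in place.
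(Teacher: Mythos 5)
Your proof is correct. The local step is the same pinching idea as in the paper's proof, but you execute it slightly differently: the paper combines the saddle-point inequality at the center point $(x_i,z_i)$ (which forces it to work inside the neighborhoods $\UU_{x_i}\times\UU_{z_i}$ where \eqref{eq:saddleinequality} holds) with the first-order convexity/concavity conditions at the other point $(\xb,\zb)$, whereas you use only the vanishing of the partial gradients at \emph{both} saddle points together with convexity/concavity of the four one-variable slices, so the saddle-property neighborhoods never enter and only the convexity-concavity neighborhood $\UU$ is needed; your observation that taking $\UU$ to be a ball guarantees the corner points $(\xb,\zt)$ and $(\xt,\zb)$ lie in $\UU$ is exactly the geometric point that makes this work. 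The globalization is where you genuinely diverge: the paper first treats compact $\SS$ via a finite subcover and a chain-of-balls argument (citing Dugundji), then reduces the general case to the compact image of a path, while you apply the local step directly to $\phi=F\circ\gamma$ and invoke the fact that a locally constant function on the connected interval $[0,1]$ is constant. Your route is shorter and avoids the compactness detour entirely; the paper's buys nothing extra here beyond making the chain structure explicit. No gaps.
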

\begin{proof}
  We start by considering the case when $\SS$ is compact. Given $(x,z)
  \in \SS$, let $\delta(x,z) > 0$ be such that $B_{\delta(x,z)}(x,z)
  \subset (\UU_{x} \times \UU_{z}) \cap \UU$, where $\UU_x$ and
  $\UU_z$ are neighborhoods where the saddle
  property~\eqref{eq:saddleinequality} holds and $\UU$ is the
  neighborhood of $(x,z)$ where local convexity-concavity holds (cf.
  Section~\ref{sec:saddle-points}). This defines a covering of $\SS$
  by open sets as
  \begin{align*}
    \SS \subset \cup_{(x,z) \in \SS} B_{\delta(x,z)}(x,z) .
  \end{align*}
  Since $\SS$ is compact, there exist a finite number of points
  $(x_1,z_1), (x_2,z_2), \dots, (x_n,z_n)$ in $\SS$ such that
  $\cup_{i=1}^n B_{\delta(x_i,z_i)}(x_i,z_i)$ covers $\SS$.  For
  convenience, denote $B_{\delta(x_i,z_i)}(x_i,z_i)$ by $B_i$.  Next,
  we show that $F_{|\SS \cap B_i}$ is constant for all
  $i \in \until{n}$.  To see this, let $(\xb,\zb) \in \SS \cap
  B_i$. From~\eqref{eq:saddleinequality}, we have
  \begin{equation}\label{eq:saddlexz}
    F(x_i,\zb) \le F(x_i,z_i) \le F(\xb,z_i).
  \end{equation}
  From the convexity of $x \mapsto F(x,\zb)$ over
  $\UU \cap (\real^n \times \setr{\zb})$, (cf. definition of local
  convexity-concavity in Section~\ref{sec:saddle-points}), and the
  fact that $\gradient_x F(\xb,\zb) = 0$, we obtain
  $F(x_i,\zb) \ge F(\xb,\zb) + (x_i - \xb)^\top \gradient_x F(\xb,\zb)
  = F(\xb,\zb)$.
  Similarly, using the concavity of $z \mapsto F(\xb,z)$, we get
  $F(\xb,z_i) \le F(\xb,\zb)$. These inequalities together
  with~\eqref{eq:saddlexz} yield
  \begin{align*}
    F(x_i,z_i) \le F(\xb,z_i) \le F(\xb,\zb) \le F(x_i,\zb) \le
    F(x_i,z_i).
  \end{align*}
  That is, $F(\xb,\zb) = F(x_i,z_i)$ and hence $F_{|\SS \cap B_i}$ is
  constant. Using this reasoning, if $\SS \cap B_i \cap B_j \not =
  \emptyset$ for any $i,j \in \until{n}$, then $F_{|\SS \cap (B_i \cup
    B_j)}$ is constant.  Using that $\SS$ is path connected, the
  fact~\cite[p. 117]{JD:66} states that, for any two points
  $(x_l,z_l), (x_m,z_m) \in \SS$, there exist distinct members $i_1,
  i_2, \dots, i_k$ of the set $\until{n}$ such that $(x_l,z_l) \in \SS
  \cap B_{i_1}$, $(x_m,z_m) \in \SS \cap B_{i_k}$ and $\SS \cap
  B_{i_t} \cap B_{i_{t+1}} \not = \emptyset$ for all $t \in
  \until{k-1}$. Hence, we conclude that $F_{|\SS}$ is constant.
  Finally, in the case when $\SS$ is not compact, pick any two points
  $(x_l,z_l), (x_m,z_m) \in \SS$ and let $\gamma:[0,1] \to \SS$ be a
  continuous map with $\gamma(0) = (x_l,z_l)$ and $\gamma(1) =
  (x_m,z_m)$ denoting the path between these points.  The image
  $\gamma([0,1]) \subset \SS$ is closed and bounded, hence compact,
  and therefore, $F_{|\gamma([0,1])}$ is constant. Since the two
  points are arbitrary, we conclude that $F_{|\SS}$ is constant.
\end{proof}

The difficulty in Lemma~\ref{le:Fconstant} arises due to the local
nature of the saddle points (the result is instead straightforward for
global saddle points). The next result provides a first-order condition
for strongly quasiconvex functions.

\begin{lemma}\longthmtitle{First-order property of a strongly
    quasiconvex function}\label{le:first-quasi}
  Let $\map{f}{\real^n}{\real}$ be a $\CC^1$ function that is strongly
  quasiconvex on a convex set $\DD \subset \real^n$.  Then, there
  exists a constant $s > 0$ such that 
  \begin{align}\label{eq:f-order-cond}
    f(x) \le f(y) \Rightarrow  \gradient f(y)^\top (x-y) \le -s 
    \norm{x-y}^2,
  \end{align}
  for any  $x, y \in \DD$.
\end{lemma}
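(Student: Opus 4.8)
The plan is to derive the first-order inequality directly from the defining inequality of strong quasiconvexity by evaluating it along the segment from $y$ toward $x$ and then letting the convex-combination parameter tend to zero. Since $\DD$ is convex, for every $\lambda \in [0,1]$ the point $y_\lambda := \lambda x + (1-\lambda) y = y + \lambda (x-y)$ lies in $\DD$, so the defining estimate with parameter $s$ applies to the pair $(x,y)$ along the whole segment.

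First I would use the hypothesis $f(x) \le f(y)$ to identify $\max\{f(x),f(y)\} = f(y)$, so that the definition of strong quasiconvexity yields
\[
  f(y) - f(y_\lambda) \ge s \lambda (1-\lambda) \norm{x-y}^2
\]
for all $\lambda \in [0,1]$. Rearranging and dividing by $\lambda > 0$ gives
\[
  \frac{f(y_\lambda) - f(y)}{\lambda} \le -s (1-\lambda) \norm{x-y}^2 .
\]

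The key step is to pass to the limit $\lambda \to 0^+$. Because $f$ is $\CC^1$, the left-hand side converges to the one-sided directional derivative of $f$ at $y$ in the direction $x-y$, which equals $\gradient f(y)^\top (x-y)$; the right-hand side converges to $-s\norm{x-y}^2$ since $(1-\lambda) \to 1$. Combining these two limits yields precisely the claimed bound~\eqref{eq:f-order-cond}, with $s$ taken to be the strong quasiconvexity parameter of $f$ on~$\DD$.

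I do not expect a genuine obstacle here: the only points requiring care are (a) confirming $y_\lambda \in \DD$ so that the strong quasiconvexity inequality is valid, which is immediate from convexity of $\DD$, and (b) justifying that the limit of the difference quotient recovers the gradient inner product, which is standard for a differentiable function. The inequality holds for all $\lambda$, so no uniformity or compactness argument is needed; the limit is taken pointwise for the fixed pair $(x,y)$.
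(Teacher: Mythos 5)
Your proof is correct and follows essentially the same route as the paper: apply the strong quasiconvexity inequality along the segment from $y$ to $x$, divide by $\lambda$, and let $\lambda \to 0^+$ to recover $\gradient f(y)^\top(x-y)$ from the difference quotient (the paper phrases this last step via a first-order Taylor expansion with a $o(\lambda)$ remainder, which is the same argument). No gaps.
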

\begin{proof}
  Consider $x,y \in \DD$ such that $f(x) \le f(y)$. From strong
  quasiconvexity we have $f(y) \ge f(\lambda x + (1-\lambda)y) + s
  \lambda (1-\lambda) \norm{x-y}^2$, for any $\lambda \in
  [0,1]$. Rearranging,
  \begin{equation}\label{eq:f-ineq}
    f(\lambda x + (1-\lambda)y) - f(y) \le - s \lambda (1-\lambda)
    \norm{x-y}^2.
  \end{equation}
  On the other hand, the Taylor's approximation of $f$ at $y$ yields
  the following equality at point
  $y + \lambda(x-y)$, which is equal to $\lambda x + (1-\lambda)y$, as
  \begin{align}\label{eq:f-taylor}
    f(\lambda x +(1-\lambda) y) - f(y) & = \gradient f(y)^\top
    (\lambda x +(1-\lambda)y -y) + g(\lambda x + (1-\lambda) y - y)
    \notag
    \\
    & = \lambda \gradient f(y)^\top (x - y) + g(\lambda(x-y)) ,
  \end{align}
  for some function $g$ with the property
  $\lim_{\lambda \to 0} \frac{g(\lambda(x-y))}{\lambda} = 0$.
  Using~\eqref{eq:f-taylor} in~\eqref{eq:f-ineq}, dividing by
  $\lambda$, and taking the limit $\lambda \to 0$ yields the result.
\end{proof}

The next result is helpful when dealing with dynamical systems that
have non-isolated equilibria to establish the asymptotic convergence
of the trajectories to a point, rather than to a set.

\begin{lemma}\longthmtitle{Asymptotic convergence to a
    point~\cite[Corollary 5.2]{SPB-DSB:03}}\label{le:convtopoint}
  Consider the nonlinear system
  \begin{equation}\label{eq:nonlinearsys}
    \dot x(t) = f(x(t)), \quad x(0) = x_0, 
  \end{equation}
  where $f:\real^n \to \real^n$ is locally Lipschitz.  Let
  $\WW \subset \real^n$ be a compact set that is positively invariant
  under~\eqref{eq:nonlinearsys} and let $\EE \subset \WW$ be a set of
  stable equilibria. If a trajectory $t \mapsto x(t)$
  of~\eqref{eq:nonlinearsys} with $x_0 \in \WW$ satisfies
  $\lim_{t \to \infty} d_{\EE}(x(t)) = 0$, then the trajectory
  converges to a point in $\EE$.
\end{lemma}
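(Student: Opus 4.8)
The plan is to reduce convergence-to-a-point to the assertion that the positive limit set of the trajectory is a singleton, and then to exploit the stability of the equilibria in $\EE$ to establish that singleton property. First I would record that local Lipschitzness of $f$ guarantees uniqueness of solutions, so the standard theory of positive limit sets applies. Since $\WW$ is compact and positively invariant and $x_0 \in \WW$, the trajectory $t \mapsto x(t)$ remains in $\WW$ for all $t \ge 0$ and is therefore bounded; consequently its positive limit set $\Omega$ is nonempty, compact, and invariant under~\eqref{eq:nonlinearsys}. Next, because $\lim_{t \to \infty} d_{\EE}(x(t)) = 0$ and $d_{\EE}$ is continuous, every point $p \in \Omega$ satisfies $d_{\EE}(p) = 0$; since $\EE$ is closed (in the convention of Section~\ref{sec:proximal-calculus}, $d_{\EE}$ is defined only for closed sets), this gives $p \in \EE$, so $\Omega \subset \EE$ and in particular every point of $\Omega$ is a stable equilibrium.

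The crux of the argument is then to show that $\Omega$ consists of a single point, using only that $\Omega$ is nonempty and that one (hence every) point of it is a stable equilibrium. I would fix an arbitrary $p \in \Omega$ and let $\epsilon > 0$. By Lyapunov stability of $p$, there exists $\delta > 0$ such that any solution entering $B_{\delta}(p)$ remains in $B_{\epsilon}(p)$ for all subsequent time. Since $p$ is a positive limit point, there is a time $t_0$ with $x(t_0) \in B_{\delta}(p)$; stability then forces $x(t) \in B_{\epsilon}(p)$ for all $t \ge t_0$. Hence every positive limit point lies in $\mathrm{cl}(B_{\epsilon}(p))$, i.e.\ $\Omega \subset \mathrm{cl}(B_{\epsilon}(p))$. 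As $\epsilon > 0$ was arbitrary, $\Omega = \{p\}$. Finally, a bounded trajectory whose positive limit set is the single point $p$ must satisfy $\lim_{t \to \infty} x(t) = p \in \EE$: otherwise some subsequence would stay bounded away from $p$ and, by compactness of $\WW$, would accumulate at a second limit point, contradicting $\Omega = \{p\}$.

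The step I expect to be the main obstacle is this collapse of $\Omega$ to a singleton: the danger is precisely that the trajectory, while approaching the continuum $\EE$, slides along it and never settles. The key realization is that the stability of a \emph{single} limit point already traps the entire tail of the trajectory inside an arbitrarily small ball around that point, which is incompatible with any residual drift. I would be careful to invoke closedness of $\EE$ at the outset, so that the limit point $p$ selected in the argument is genuinely one of the stable equilibria furnished by the hypothesis (i.e.\ $\Omega \subset \EE$ rather than merely $\Omega \subset \mathrm{cl}(\EE)$); without this, the point trapping the trajectory might fail to be known to be stable, and the argument would break down.
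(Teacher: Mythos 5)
Your proof is correct. Note that the paper itself does not prove this lemma at all: it is imported verbatim as Corollary~5.2 of~\cite{SPB-DSB:03}, so there is no internal argument to compare against, and what you have written is essentially the standard proof underlying that cited result. Your decomposition is the right one: boundedness inside the compact positively invariant set $\WW$ gives a nonempty, compact, invariant positive limit set $\Omega$; continuity of $d_{\EE}$ together with closedness of $\EE$ places $\Omega$ inside $\EE$; and Lyapunov stability of a single limit point $p$, combined with uniqueness of solutions (so the tail from time $t_0$ is the solution issued from $x(t_0)$), traps the trajectory in $B_{\epsilon}(p)$ for every $\epsilon>0$, collapsing $\Omega$ to $\{p\}$. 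Your explicit attention to closedness of $\EE$ is well placed: the statement only says ``a set of stable equilibria,'' and without closedness the limit point produced by the argument could a priori lie in $\mathrm{cl}(\EE)\setminus\EE$, where it is still an equilibrium by continuity of $f$ but not known to be stable, which would break the trapping step; the paper's convention that $d_{\EE}$ is defined for closed sets resolves this. The only hypothesis you use implicitly is forward completeness of the solution through $x(t_0)$, which is immediate since that solution is the tail of the given trajectory and remains in the compact set~$\WW$.
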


Finally, we establish the asymptotic stability of a manifold of
equilibria through linearization techniques. We start with a useful
intermediary result.
\begin{lemma}\longthmtitle{Limit points of Jacobian of a
  piecewise $\CC^2$ function}\label{le:limit-jacobian}
  Let $\map{f}{\real^n}{\real^n}$ be piecewise $\CC^2$. Then, for every
  $x \in \real^n$, there exists a finite index set $\II_x \subset
  \integerspositive$ and a set of matrices $\{A_{x,i} \in \real^{n
  \times n} \}_{i \in \II_x}$ such that
  \begin{equation}\label{eq:limit-set} 
    \setdef{A_{x,i}}{i \in \II_x} = \setdef{\lim_{k \to \infty}
    Df(x_k)} {x_k \to x, x_k \in \real^n \setminus \Omega_f}, 
  \end{equation} 
  where $\Omega_f$ is the set of points where $f$ is not
  differentiable. 
\end{lemma}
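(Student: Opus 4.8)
The plan is to exhibit an explicit finite list of candidate matrices and then prove that the limit set on the right of~\eqref{eq:limit-set} coincides with it. Writing the piecewise-$\CC^2$ data as patches $\DD_1,\dots,\DD_m$ with associated $\CC^2$ maps $f_1,\dots,f_m$ satisfying $\cl(\DD_i) \subset \DD_i^e$ and $f = f_i$ on $\cl(\DD_i)$, I first note that on each open patch $\DD_i$ the identity $f = f_i$ holds on a neighborhood of every point, so $f$ is differentiable there with $Df = Df_i$; in particular $\Omega_f \subset \cup_{i=1}^m \bd(\DD_i)$. I would then define the finite index set $\II_x = \setdef{i \in \until{m}}{x \in \cl(\DD_i)}$, which is nonempty because $\real^n = \cup_{i=1}^m \cl(\DD_i)$, and set $A_{x,i} = Df_i(x)$ for $i \in \II_x$. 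Each $A_{x,i}$ is well defined since $x \in \cl(\DD_i) \subset \DD_i^e$ and $f_i \in \CC^2$ makes $Df_i$ continuous at $x$. This produces at most $m$ matrices, and it remains to show that $\setdef{A_{x,i}}{i \in \II_x}$ equals the limit set $S$ appearing on the right of~\eqref{eq:limit-set}.

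For the inclusion of $\setdef{A_{x,i}}{i \in \II_x}$ into $S$ I would use that $\DD_i$ is dense in $\cl(\DD_i)$: for each $i \in \II_x$ choose $x_k \in \DD_i$ with $x_k \to x$; since $x_k \in \DD_i \subset \real^n \setminus \Omega_f$, we have $Df(x_k) = Df_i(x_k) \to Df_i(x) = A_{x,i}$ by continuity of $Df_i$. Hence every candidate matrix is a genuine Jacobian limit point.

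The reverse inclusion $S \subseteq \setdef{A_{x,i}}{i \in \II_x}$ is the heart of the matter. Given $A = \lim_k Df(x_k)$ with $x_k \to x$ and $x_k \notin \Omega_f$, the pigeonhole principle (finitely many closures covering $\real^n$) lets me pass to a subsequence lying entirely in a single $\cl(\DD_i)$; taking limits gives $x \in \cl(\DD_i)$, so $i \in \II_x$. It then suffices to prove the sub-claim that if $f$ is differentiable at a point $y \in \cl(\DD_i)$ then $Df(y) = Df_i(y)$, since applying it along the subsequence yields $A = \lim_k Df_i(x_k) = Df_i(x) = A_{x,i}$. The sub-claim I would establish by comparing first-order expansions: because $f = f_i$ on $\cl(\DD_i)$ and both are differentiable at $y$, the linear map $M = Df(y) - Df_i(y)$ satisfies $Mh = o(\norm{h})$ for all increments $h$ with $y + h \in \cl(\DD_i)$; evaluating along $h = t_j v_j$ with $t_j \downarrow 0$, $v_j \to v$ and $y + t_j v_j \in \cl(\DD_i)$ gives $Mv = 0$ for every $v$ in the contingent cone of $\cl(\DD_i)$ at $y$.

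The main obstacle is exactly this final implication: concluding $M = 0$ requires the contingent cone of $\cl(\DD_i)$ at the boundary point $y$ to span $\real^n$, and this is the only place where the geometry of the patches enters. It holds automatically at regular boundary points, where the cone is a closed halfspace, but can fail at degenerate points such as cusps. For the piecewise-$\CC^2$ structures arising in our setting the patch boundaries are regular (locally sublevel sets of $\CC^1$ functions with nonvanishing gradient), so the cone always spans and the sub-claim applies uniformly; I would either invoke this regularity directly or restrict the analysis to the regular part of the boundaries, which is all the downstream results require. With the sub-claim in force one has $Df(x_k) = Df_i(x_k) \to Df_i(x)$ along the chosen subsequence for every limit point, giving $S \subseteq \setdef{A_{x,i}}{i \in \II_x}$. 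Combined with the reverse inclusion this yields the stated equality for a family of at most $m$ matrices, establishing the claimed finiteness.
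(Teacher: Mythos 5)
Your construction is the same as the paper's: the index set $\II_x=\setdef{i}{x\in\cl(\DD_i)}$, the candidates $A_{x,i}=Df_i(x)$, the forward inclusion by density of $\DD_i$ in $\cl(\DD_i)$, and the reverse inclusion by pigeonholing the sequence over the finitely many patches. The one genuine divergence is in how the reverse inclusion is routed. The paper partitions the sequence into subsequences contained in the \emph{open} sets $\DD_i$, where $Df(x_k)=Df_i(x_k)$ holds trivially, and so never needs your sub-claim; you partition over the closures $\cl(\DD_i)$, which forces you to prove that $Df(y)=Df_i(y)$ at differentiability points $y$ on $\bd(\DD_i)$, and that is exactly where you (correctly) find that the contingent cone of $\cl(\DD_i)$ at $y$ must span $\real^n$ --- a property not implied by the paper's definition of piecewise $\CC^2$, which places no regularity condition whatsoever on the patch boundaries. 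So as written your proof needs an extra hypothesis. You should be aware, however, that you have not created a problem the paper avoids: the paper's partition step is not literally valid either, since a sequence point $x_k\in\real^n\setminus\Omega_f$ may lie in $\cup_i\bd(\DD_i)$ and hence in no open patch (take $f$ globally $\CC^2$ and split $\real^n$ by a hyperplane: the hyperplane points are differentiability points in no $\DD_i$). Your cone analysis is precisely the tool needed to handle those residual points, and your diagnosis that this is ``the heart of the matter'' is accurate. The cleanest repairs are either the boundary regularity you propose, or excising an additional null set from the right-hand side of~\eqref{eq:limit-set} as is done in the paper's definition of the generalized gradient (where an arbitrary measure-zero set $S$ is excluded precisely to sidestep this issue); absent one of these, both your argument and the paper's leave the same small gap, yours visibly and the paper's silently.
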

\begin{proof}
  Since $f$ is piecewise $\CC^2$, cf. Section~\ref{sec:notation}, let
  $\DD_1, \dots, \DD_m \subset \real^n$ be the finite collection of
  disjoint open sets such that $f$ is $\CC^2$ on $\DD_i$ for each $i
  \in \until{m}$ and $\real^n = \cup_{i=1}^m \mathrm{cl}(\DD_i)$.  Let
  $x \in \real^n$ and define $\II_x = \setdef{i \in \until{m}}{x \in
    \mathrm{cl}(\DD_i)}$ and $A_{x,i} = \setdef{\lim_{k \to \infty}
    Df(x_k)}{x_k \to x, x_k \in \DD_i}$.  Note that $A_{x,i}$ is
  uniquely defined for each $i$ as, by definition,
  $f_{|\mathrm{cl}(\DD_i)}$ is $\CC^2$. To show
  that~\eqref{eq:limit-set} holds for the above defined matrices,
  first note that the set $\setdef{A_{x,i}}{i \in \II_x}$ is included
  in the right hand side of~\eqref{eq:limit-set} by definition. 
  To show the other inclusion, consider any sequence
  $\{x_k\}_{k=1}^\infty \subset \real^n \setminus \Omega_f$ with $x_k
  \to x$. One can partition this sequence into subsequences, each
  contained in one of the sets $\DD_i$, $i \in \II_x$ and each
  converging to $x$. Thus, the limit $\lim_{k \to \infty}
  Df(x_k)$ is contained in the set $\{A_{x,i}\}_{i \in \II_x}$,
  proving the other inclusion and 
  yielding~\eqref{eq:limit-set}. Note that, in the nonsmooth analysis
  literature~\cite[Chapter 2]{FHC:83}, the convex hull of 
  matrices $\{A_{x,i}\}_{i \in \II_x}$ is known as the
  generalized Jacobian of $f$ at $x$.
\end{proof}

The following statement is an extension of~\cite[Exercise 6]{DH:81} to
vector fields that are only piecewise twice continuously
differentiable. Its proof is inspired, but cannot be directly implied
from, center manifold theory~\cite{JC:82}.

\begin{proposition}\longthmtitle{Asymptotic stability of a manifold of
    equilibrium points for piecewise $\CC^2$ vector
    fields}\label{pr:convtoman-cl}
  Consider the system
  \begin{equation}\label{eq:fsystem-cl}
    \dot x = f(x), 
  \end{equation}
  where $f: \real^n \to \real^n$ is piecewise $\CC^2$ and locally
  Lipschitz in a neighborhood of a $p$-dimensional submanifold of
  equilibrium points $\EE \subset \real^n$
  of~\eqref{eq:fsystem-cl}. 
  Assume that at each $\xo \in \EE$, the set of matrices $\{A_{\xo,i}
  \}_{i \in \II_{\xo}}$ from Lemma~\ref{le:limit-jacobian} satisfy:
  \begin{enumerate}
  \item 
    there exists an orthogonal matrix
    $Q \in \real^{n \times n}$ such that, for all $i \in \II_{\xo}$,
    \begin{align*}
      Q^\top A_{\xo,i} Q =
      \begin{bmatrix} 0 & 0
        \\
        0 & \tilde{A}_{\xo,i}
      \end{bmatrix},
    \end{align*}
    where $\tilde{A}_{\xo,i} \in \real^{n-p \times n-p}$,
  \item the eigenvalues of the matrices
    $\{\tilde{A}_{\xo,i}\}_{i \in \II_{\xo}}$ have negative real parts,
  \item there exists a positive definite matrix
    $P \in \real^{n-p \times n-p}$ such that
    \begin{align*}
      \tilde{A}_{\xo,i}^\top P + P \tilde{A}_{\xo,i} \prec 0,  
      \quad  \text{ for all } i \in \II_{(\xo,\zo)}.
    \end{align*}
  \end{enumerate}
  Then, $\EE$ is locally asymptotically stable
  under~\eqref{eq:fsystem-cl} and the trajectories converge to a point
  in $\EE$.
\end{proposition}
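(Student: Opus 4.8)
The plan is to adapt the center-manifold argument behind \cite[Exercise 6]{DH:81} to the piecewise $\CC^2$ setting, replacing the single Jacobian at $\xo$ by the finite family of limit Jacobians $\{A_{\xo,i}\}_{i \in \II_{\xo}}$ furnished by Lemma~\ref{le:limit-jacobian} and exploiting the common block structure (i) together with the common Lyapunov certificate (iii). Since the conclusion is local, I would fix $\xo \in \EE$ and work in a neighborhood. Using the orthogonal matrix $Q = Q(\xo)$ from hypothesis (i), I introduce coordinates $y = Q^\top(x - \xo)$ and split $y = (u,w) \in \real^p \times \real^{n-p}$, so that the common kernel (the zero block) is the subspace $\{w = 0\}$. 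Because $\EE$ is a $p$-dimensional submanifold of equilibria whose tangent space at $\xo$ coincides with this common kernel, $\EE$ is locally the graph $w = h(u)$ of a map $h$ with $h(0) = 0$ and $Dh(0) = 0$; a $\CC^1$ change of coordinates $w \mapsto w - h(u)$ then straightens $\EE$ to $\{w = 0\}$ while leaving the linear part of the dynamics at $\xo$ unchanged, since the graph correction has vanishing derivative at the origin. After this reduction the transverse linearization of the $i$-th $\CC^2$ representative is exactly $\tilde{A}_{\xo,i}$.

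The second step records the consequence of $\EE$ being a manifold of equilibria: in the straightened coordinates $f \equiv 0$ on $\{w = 0\}$, so writing the reduced vector field as $(\dot u; \dot w) = (g_1(u,w); g_2(u,w))$ one has $g_1(u,0) = 0$ and $g_2(u,0) = 0$. Within each patch $\DD_i$ accumulating at $\xo$, the $\CC^2$ extension $f_i$ on the enlarged set $\DD^e_i$ lets me Taylor-expand in $w$ about $(u,0)$ to obtain $g_2(u,w) = \tilde{A}_{\xo,i} w + R_i(u,w)$, where $\norm{R_i(u,w)}$ is $o(\norm{w})$ uniformly for $(u,w)$ ranging over the finitely many patches in a small neighborhood, and similarly $\norm{g_1(u,w)} \le K \norm{w}$ by local Lipschitzness.

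The third step is the Lyapunov estimate. I take $V(u,w) = w^\top P w$ with $P = P(\xo)$ from hypothesis (iii), which is positive definite in $w$. Along a trajectory, which is Lipschitz, hence absolutely continuous with $\frac{d}{dt}x(t) = f(x(t))$ for almost every $t$, whenever the state lies in patch $i$ one computes
\begin{align*}
  \Lie_f V = w^\top \big(\tilde{A}_{\xo,i}^\top P + P \tilde{A}_{\xo,i}\big) w + 2 w^\top P R_i(u,w).
\end{align*}
By (iii) the quadratic term is bounded above by $-c \norm{w}^2$ for some $c > 0$, uniformly over the finite index set $\II_{\xo}$, while the remainder is $o(\norm{w}^2)$; hence there is $\delta > 0$ with $\Lie_f V < 0$ on $\setdef{(u,w)}{0 < \norm{w} \le \delta, \norm{u} \le \delta}$, regardless of the active patch and across the nondifferentiability set $\Omega_f$, where continuity of $f$ suffices. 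Because $V$ is comparable to $\norm{w}^2$, this yields $\Lie_f V \le -c' V$, so $\norm{w(t)}$ decays at an exponential rate and $d_{\EE}(x(t)) \to 0$. To control the tangential motion I use $\norm{\dot u} \le K \norm{w}$ together with the exponential decay of $\norm{w}$ to conclude $\int_0^\infty \norm{\dot u(t)}\, dt < \infty$, so $u(t)$ converges; Lyapunov stability of each point of $\EE$ follows since this integral is small whenever $\norm{w(0)}$ is small. Choosing a compact positively invariant sublevel set of $V$ intersected with a tube $\norm{u} \le \delta$, Lemma~\ref{le:convtopoint} then upgrades convergence to the set $\EE$ into convergence to a single point of $\EE$.

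I expect the main obstacle to be the piecewise smoothness. One must (a) justify the straightening of $\EE$ and the identification of the reduced transverse Jacobians with the $\tilde{A}_{\xo,i}$ without assuming global differentiability of $f$, relying instead on the $\CC^2$ extensions $f_i$ on the enlarged patches $\DD^e_i$, and (b) make the sign of $\Lie_f V$ negative \emph{uniformly} across all patches meeting $\xo$ and along trajectories that may repeatedly cross $\Omega_f$. The common certificate $P$ supplied by (iii) is precisely what makes the single function $V = w^\top P w$ a Lyapunov function simultaneously for every limit Jacobian, which is the crux that allows the center-manifold reasoning to go through despite the absence of a single smooth linearization at $\xo$.
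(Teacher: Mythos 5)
Your proposal follows essentially the same route as the paper's proof: fix $\xo$, pass to $(u,w)$ coordinates via the common orthogonal $Q$, straighten $\EE$ to $\{w=0\}$ through the graph map $h$ with $h(0)=0$, $Dh(0)=0$, exploit that the nonlinear remainders vanish identically on $\{w=0\}$ and have vanishing $w$-derivative at the origin to get a uniform $O(M_\eps\norm{w})$ bound across the finitely many patches, run the common Lyapunov function $V(w)=w^\top P w$ from hypothesis (iii) to obtain exponential decay of $\norm{w(t)}$, integrate $\norm{\dot u}\le K\norm{w}$ to control the tangential drift and establish stability, and invoke Lemma~\ref{le:convtopoint} for pointwise convergence. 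The argument is correct and matches the paper's Steps I--III in both structure and the key technical points (uniformity over $\II_{\xo}$ and the role of the shared certificate $P$).
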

\begin{proof}
  Our strategy to prove the result is to linearize the vector field
  $f$ on each of the patches around any equilibrium point and employ a
  common Lyapunov function and a common upper bound on the growth of
  the second-order term to establish the convergence of the
  trajectories. This approach is an extension of the proof
  of~\cite[Theorem 8.2]{HKK:02}, where the vector field $f$ is assumed
  to be $\CC^2$ everywhere. Let $\xo \in \EE$. For convenience,
  translate $\xo$ to the origin of~\eqref{eq:fsystem-cl}.  We divide
  the proof in its various parts to make it easier to follow the
  technical arguments.

  \emph{Step I: linearization of the vector field on patches around
    the equilibrium point.} From Lemma~\ref{le:limit-jacobian}, define
  $\II_0 = \setdef{i \in \until{m}}{0 \in \mathrm{cl}(\DD_i)}$ and
  matrices $\{A_{0,i}\}_{i \in \II_0}$ as the limit points of the
  Jacobian matrices.  From the definition of piecewise $\CC^2$
  function, there exist $\CC^2$ functions
  $\{\map{f_i}{\DD_i^e}{\real^n}\}_{i \in \II_0}$ with $\DD_i^e$ open
  such that with $\mathrm{cl}(\DD_i) \subset \DD_i^e$ and the maps
  $f_{|\mathrm{cl}(\DD_i)}$ and $f_i$ take the same value over the set
  $\mathrm{cl}(\DD_i)$. Note that $0 \in \DD_i^e$ for every $i \in
  \II_0$.  By definition of the matrices $\{A_{0,i}\}_{i \in \II_0}$,
  we deduce that $Df_i(0) = A_{0,i}$ for each $i \in
  \II_0$. Therefore, there exists a neighborhood $\NN_0 \subset
  \real^n$ of the origin and a set of $\CC^2$ functions
  $\{\map{g_i}{\real^n}{\real^n}\}_{i \in \II_0}$ such that, for all $
  i \in \II_0$, $ f_i(x) = A_{0,i} x + g_i(x)$, for all $x \in \NN_0
  \cap \DD_i^e$, where
  \begin{align}\label{eq:g-cond}
    g_i(0) = 0 \quad \text{ and } \quad \frac{\partial g_i}{\partial
      x} (0) = 0.
  \end{align}
  Without loss of generality, select $\NN_0$ such that $\NN_0 \cap
  \DD_i$ is empty for every $i \not \in \II_0$. That is, $\cup_{i \in
    \II_0} (\NN_0 \cap \mathrm{cl}(\DD_i))$ contains a neighborhood of
  the origin. With the above construction, the vector field $f$ in a
  neighborhood around the origin is written as
  \begin{align}\label{eq:fsystem-patch}
    f(x) = f_i(x) = A_{0,x} x +g_i(x), \text{ for all } x \in
    \NN_0 \cap \mathrm{cl}(\DD_i), i \in \II_0,
  \end{align}
  where for each $i \in \II_0$, $g_i$ satisfies~\eqref{eq:g-cond}.

  \emph{Step II: change of coordinates.}  Subsequently, from
  hypothesis (i), there exists an orthogonal matrix $Q \in \real^{n
    \times n}$, defining an orthonormal transformation denoted by
  $\map{\TT_Q}{\real^n}{\real^n}$, $x \mapsto (u,v)$, that yields the
  new form of~\eqref{eq:fsystem-patch} as
  \begin{align}\label{eq:uv-coordinates}
    \begin{bmatrix}
      \dot u
      \\
      \dot v
    \end{bmatrix}
    =
    \begin{bmatrix}
      0 & 0
      \\
      0 & \tilde{A}_{0,i}
    \end{bmatrix}
    \begin{bmatrix}
      u \\ v 
    \end{bmatrix}
    + 
    \begin{bmatrix}
      \tilde{g}_{i,1}(u,v)
      \\
      \tilde{g}_{i,2} (u,v)
    \end{bmatrix}
    , \text{ for all } (u,v) \in \TT_Q(\NN_0 \cap \mathrm{cl}(\DD_i)),
    \, i \in \II_0,
  \end{align}
  where for each $i \in \II_0$, the matrix $\tilde{A}_{0,i}$ has
  eigenvalues with negative real parts (cf. hypothesis (ii)) and for
  each $i \in \II_0$ and $k \in \{1,2\}$ we have
  \begin{align}\label{eq:g-cond-2}
    \tilde{g}_{i,k}(0,0) = 0, \quad \frac{\partial
      \tilde{g}_{i,k}}{\partial u} (0,0) & = 0, \quad \text{ and }
    \quad \frac{\partial \tilde{g}_{i,k}}{\partial v} (0,0) = 0.
  \end{align}
  With a slight abuse of notation, denote the manifold of equilibrium
  points in the transformed coordinates by $\EE$ itself, i.e., $\EE =
  \TT_Q(\EE)$.  From~\eqref{eq:uv-coordinates}, we deduce that the
  tangent and the normal spaces to the equilibrium manifold $\EE$ at
  the origin are $\setdef{(u,v) \in \real^p \times \real^{n-p} }{v =
    0}$ and $\setdef{(u,v) \in \real^p \times \real^{n-p} }{u=0}$,
  respectively.  Due to this fact and since $\EE$ is a submanifold of
  $\real^n$, there exists a smooth function
  $\map{h}{\real^p}{\real^{n-p}}$ and a neighborhood $\UU \subset
  \TT_Q(\NN_0) \subset \real^n$ of the origin such that for any $(u,v)
  \in \UU$, $v = h(u)$ if and only if $(u,v) \in \EE \cap
  \UU$. Moreover,
  \begin{equation}\label{eq:h-cond}
    h(0) = 0 \text{ and } \frac{\partial h}{\partial u}(0) = 0.
  \end{equation}
  Now, consider the coordinate $w = v - h(u)$ to quantify the distance
  of a point $(u,v)$ from the set $\EE$ in the neighborhood $\UU$.  To
  conclude the proof, we focus on showing that there exists a
  neighborhood of the origin such that along a trajectory
  of~\eqref{eq:uv-coordinates} initialized in this neighborhood, we
  have $w(t) \to 0$ and $(u(t),h(u(t))) \in \UU$ at all $t \ge 0$.  In
  $(u,w)$-coordinates, over the set $\UU$, the
  system~\eqref{eq:uv-coordinates} reads as
  \begin{align}\label{eq:uw-coordinates}
    \begin{bmatrix}
      \dot u
      \\ 
      \dot w 
    \end{bmatrix}
    =
    \begin{bmatrix}
      0 & 0
      \\ 
      0 & \tilde{A}_{0,i} 
    \end{bmatrix}
    \begin{bmatrix}
      u
      \\ 
      w
    \end{bmatrix}
    +
    \begin{bmatrix}
      \bar{g}_{i,1}(u,w)
      \\
      \bar{g}_{i,2}(u,w)
    \end{bmatrix}
    , \text{ for } (u,w+h(u)) \in \UU \cap \TT_Q( \mathrm{cl}(\DD_i)
    ), \, i \in \II_0,
  \end{align}
  where $\bar{g}_{i,1}(u,w) = \tilde{g}_{i,1}(u,w+h(u))$ and
  $\bar{g}_{i,2}(u,w) = \tilde{A}_{0,i} h(u) +
  \tilde{g}_{i,2}(u,w+h(u)) - \frac{\partial h}{\partial
    u}(u)(\tilde{g}_{i,1}(u,w+h(u)))$.  Further, the equilibrium
  points $\EE \cap \UU$ in these coordinates are represented by the
  set of points $(u,0)$, where $u$ satisfies $(u,h(u)) \in \EE \cap
  \UU$.  These facts, along with the conditions on the first-order
  derivatives of $\tilde{g}_{i,1}$, $\tilde{g}_{i,2}$
  in~\eqref{eq:g-cond-2} and that of $h$ in~\eqref{eq:h-cond} yield
  \begin{align}\label{eq:g-cond-3}
    \bar{g}_{i,k}(u,0) = 0 \text{ and } \frac{\partial
      \bar{g}_{i,k}}{\partial w}(0,0) = 0,
  \end{align}
  for all $i \in \II_0$ and $k \in \{1,2\}$.  Note that the functions
  $\bar{g}_{i,1}$ and $\bar{g}_{i,2}$ are~$\CC^2$. This implies that,
  for small enough $\epsilon > 0$, we have $\norm{\bar{g}_{i,k}(u,w)}
  \le M_{i,k} \norm{w}$, for $k \in \{1,2\}$, $i \in \II_0$, and $(u,w)
  \in B_{\epsilon}(0)$, where the constants $\{M_{i,k}\}_{i \in \II_0,
    k\in \{1,2\}} \subset \realpositive$ can be made arbitrarily small
  by selecting smaller $\epsilon$.  Defining $M_\eps =
  \max\setdef{M_{i,k}}{i \in \II_0 , k \in \{1,2\}}$,
  \begin{align}\label{eq:g-cond3}
    \norm{\bar{g}_{i,k}(u,w)} \le M_\eps \norm{w}, \, \text{ for } k
    \in \{1,2\} \text{ and } i \in \II_0.
  \end{align}

  \emph{Step III: Lyapunov analysis.}  With the bounds above, we proceed
  to carry out the Lyapunov analysis for~\eqref{eq:uw-coordinates}.
  Using the matrix $P$ from assumption (iii), define   the candidate
  Lyapunov function $V:\real^{n-p} \to \realnonnegative$
  for~\eqref{eq:uw-coordinates} as $ V(w) = w^\top P w$ whose Lie
  derivative along~\eqref{eq:uw-coordinates} is
  \begin{align*}
    \Lie_{~\eqref{eq:uw-coordinates}} V(w) & = w^\top
    (\tilde{A}_{0,i}^\top P + P \tilde{A}_{0,i}) w + 2 w^\top P
    \bar{g}_{i,2}(u,w),
    \\
    & \qquad \text{ for } (u,w+h(u)) \in \UU \cap
    \TT_Q(\mathrm{cl}(\DD_i) ), \, i \in \II_0.
  \end{align*}
  By assumption (iii), there exists $\lambda >0$ such that $ w^\top
  (\tilde{A}_{0,i}^\top P + P \tilde{A}_{0,i}) w \le - \lambda
  \norm{w}^2$.  Pick $\eps$ such that $(u,w) \in B_{\eps}(0)$ implies
  $(u,h(u)+w) \in \UU$. Then, the above Lie derivative can be upper
  bounded as
  \begin{align*}
    \Lie_{~\eqref{eq:uw-coordinates}} V(w) \le - \lambda \norm{w}^2 + 2
    M_\eps \norm{P} \norm{w}^2 = - \beta_1 \norm{w}^2, \quad \text{for }
    (u,w) \in B_\eps(0),
  \end{align*}
  where $\beta_1 = \lambda - 2M_\eps$. Let $\eps$ small enough so that
  $\beta_1 >0$ and therefore $\Lie_{~\eqref{eq:uw-coordinates}} V(w)
  \le - \beta_1 \norm{w}^2 < 0$ for $w \not = 0$.  Now assume that
  there exists a trajectory $t \mapsto (u(t),w(t))$
  of~\eqref{eq:uw-coordinates} that satisfies $(u(t),w(t)) \in
  B_{\epsilon}(0)$ for all $t \ge 0$.  Then, using the following
  \begin{align*}
    \lambda_{\min}(P) \norm{w}^2 \le w^\top P w \le \lambda_{\max}(P)
    \norm{w}^2,
  \end{align*}
  we get $V(w(t)) \le e^{-\beta_1 t /\lambda_{\max}(P)} V(w(0))$ along
  this trajectory. Employing the same inequalities again, we get
  \begin{equation}\label{eq:w-bound}
    \norm{w(t)}  \le K \norm{w(0)}e^{-\beta_2t}, 
  \end{equation}
  where $K = \sqrt{\frac{\lambda_{\max}(P)}{\lambda_{\min}(P)}}$ and $
  \beta_2 = \frac{\beta_1}{2 \lambda_{\max}(P)} > 0$.  This proves
  that $w(t) \to 0$ exponentially for the considered
  trajectory. Finally, we show that there exists $\delta > 0$ such
  that all trajectories of~\eqref{eq:uw-coordinates} with initial
  condition $(u(0),w(0)) \in B_{\delta}(0)$ satisfy $(u(t),w(t)) \in
  B_{\epsilon}(0)$ for all $t \ge 0$ and hence, converge to $\EE$.
  From~\eqref{eq:uw-coordinates},~\eqref{eq:g-cond3}
  and~\eqref{eq:w-bound}, we have
  \begin{align}
    \norm{u(t)} 
    & \le \norm{u(0)} + \int_0^t M_\eps K e^{-\beta_2 s}\norm{w(0)} ds,
    \le \norm{u(0)} + \frac{M_\eps K}{\beta_2}\norm{w(0)}.
    \label{eq:u-bound}
  \end{align}
  By choosing $\epsilon$ small enough, $M_\eps$ can be made arbitrarily
  small and $\beta_2$ can be bounded away from the origin. With this,
  from~\eqref{eq:w-bound} and~\eqref{eq:u-bound}, one can select a
  small enough $\delta > 0$ such that $(u(0),w(0)) \in B_{\delta} (0)$
  imply $(u(t),w(t)) \in B_{\epsilon}(0)$ for all $t \ge 0$ and $w(t)
  \to 0$.  From this, we deduce that the trajectories staring in
  $B_{\delta}(0)$ converge to the set $\EE$ and the origin is
  stable. Since $\xo$ was arbitrary, we conclude local
  asymptotic stability of  $\EE$. Convergence to a point
  follows from the application of Lemma~\ref{le:convtopoint}.
\end{proof}

The next example illustrates the application of the above result to
conclude local convergence of trajectories to a point in the manifold
of equilibria.

\begin{example}\longthmtitle{Asymptotic stability of a manifold of
    equilibria for piecewise $\CC^2$ vector fields}\label{ex:patchy}
  {\rm Consider the system $\dot x = f(x)$, where
    $\map{f}{\real^3}{\real^3}$ is given by
    \begin{equation}\label{eq:patchy-vf-ex}
      f(x) = \begin{cases}
        \begin{bmatrix}
          -1 & 1 & 0 
          \\
          1 & -2 & 1 
          \\
          0
          & 1 & -1
        \end{bmatrix}
        \begin{bmatrix}
          x_1
          \\
          x_2
          \\
          x_3
        \end{bmatrix}
        + (x_1 - x_3)^2
        \begin{bmatrix}
          1
          \\
          1 
          \\
          1
        \end{bmatrix}
        ,
        & \text{ if } x_1 - x_3 \ge 0, 
        \\
        \begin{bmatrix}
          -2 & 1 & 1
          \\
          1 & -2 & 1
          \\
          1 & 1 & -2
        \end{bmatrix}
        \begin{bmatrix}
          x_1 
          \\
          x_2
          \\
          x_3
        \end{bmatrix}
        + (x_1 - x_3)^2 (1-x_1+x_3)
        \begin{bmatrix}
          1
          \\
          1 
          \\
          1
        \end{bmatrix}
        , & \text{ if } x_1 - x_3 < 0 .
      \end{cases}
    \end{equation} 
    The set of equilibria of $f$ is the one-dimensional manifold $\EE
    = \setdef{x \in \real^3}{x_1 = x_2 = x_3}$.  Consider the regions
    $\DD_1 = \setdef{x \in \real^2}{x_1 - x_3 >0}$ and $\DD_2 =
    \setdef{x \in \real^2}{x_1 - x_3 < 0}$. Note that $f$ is locally
    Lipschitz on $\real^3$ and $\CC^2$ on $\DD_1$ and $\DD_2$.  At any
    equilibrium point $\xo \in \EE$, the limit point of the
    generalized Jacobian belongs to $\{A_1,A_2\}$, where
    \begin{align*}
      A_1 = \begin{bmatrix} -1 & 1 & 0 \\ 1 & -2 & 1 \\ 0 & 1 &
        -1\end{bmatrix} \text{ and } A_2 =
              \begin{bmatrix} -2 & 1 & 1 \\ 1 & -2 & 1 \\ 1 & 1 & -2
              \end{bmatrix}.
    \end{align*}
    With the orthogonal matrix $Q = \left[
      \begin{matrix} 1 & 1 & 1 \\ 1 & -1 & 1 \\ 1 & 0 &
        -2 \end{matrix} \right]$ we get,
    \begin{align*}
      Q^\top A_1 Q = \begin{bmatrix} 0 & 0 & 0\\ 0 & -5 & 3 \\ 0 & 3 &
        -9 \end{bmatrix}, \qquad Q^\top A_2 Q = \begin{bmatrix} 0 & 0
        & 0\\ 0 & -6 & 0 \\ 0 & 0 & -18 \end{bmatrix}.
    \end{align*}
    The nonzero $2 \times 2$-submatrices obtained in the above
    equation have eigenvalues with negative real parts and have the
    identity matrix as a common Lyapunov function.  Therefore, from
    Proposition~\ref{pr:convtoman-cl}, we conclude that $\EE$ is
    locally asymptotically stable under $\dot x = f(x)$, as
    illustrated in Figure~\ref{fig:patchy-vf-ex}.
    \begin{figure}[htb!]
      \centering
      \subfigure[Trajectory]{\includegraphics[width=.44\linewidth]{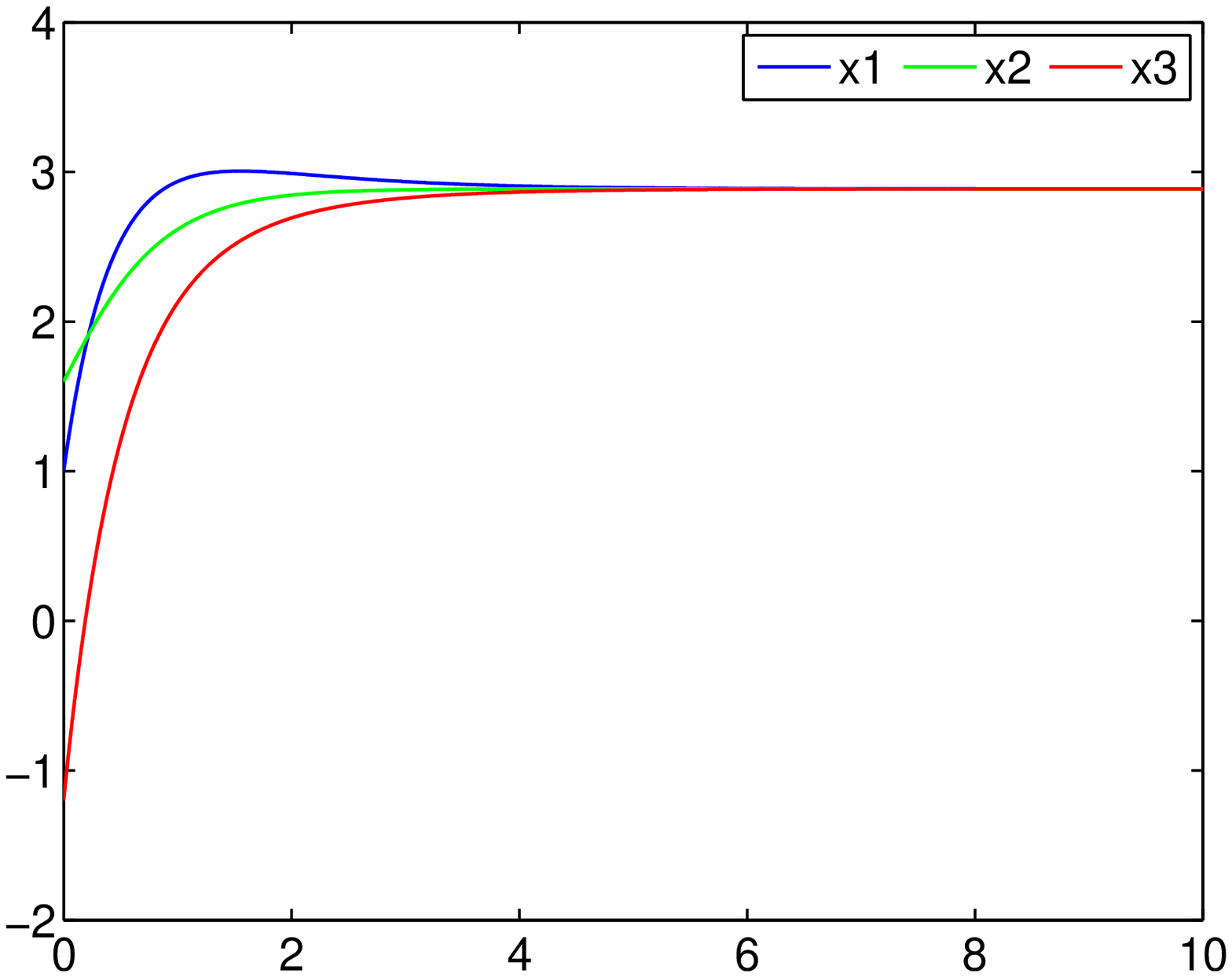}}
      \quad
      \subfigure[Distance to equilibrium set]{\includegraphics[width=.44\linewidth]{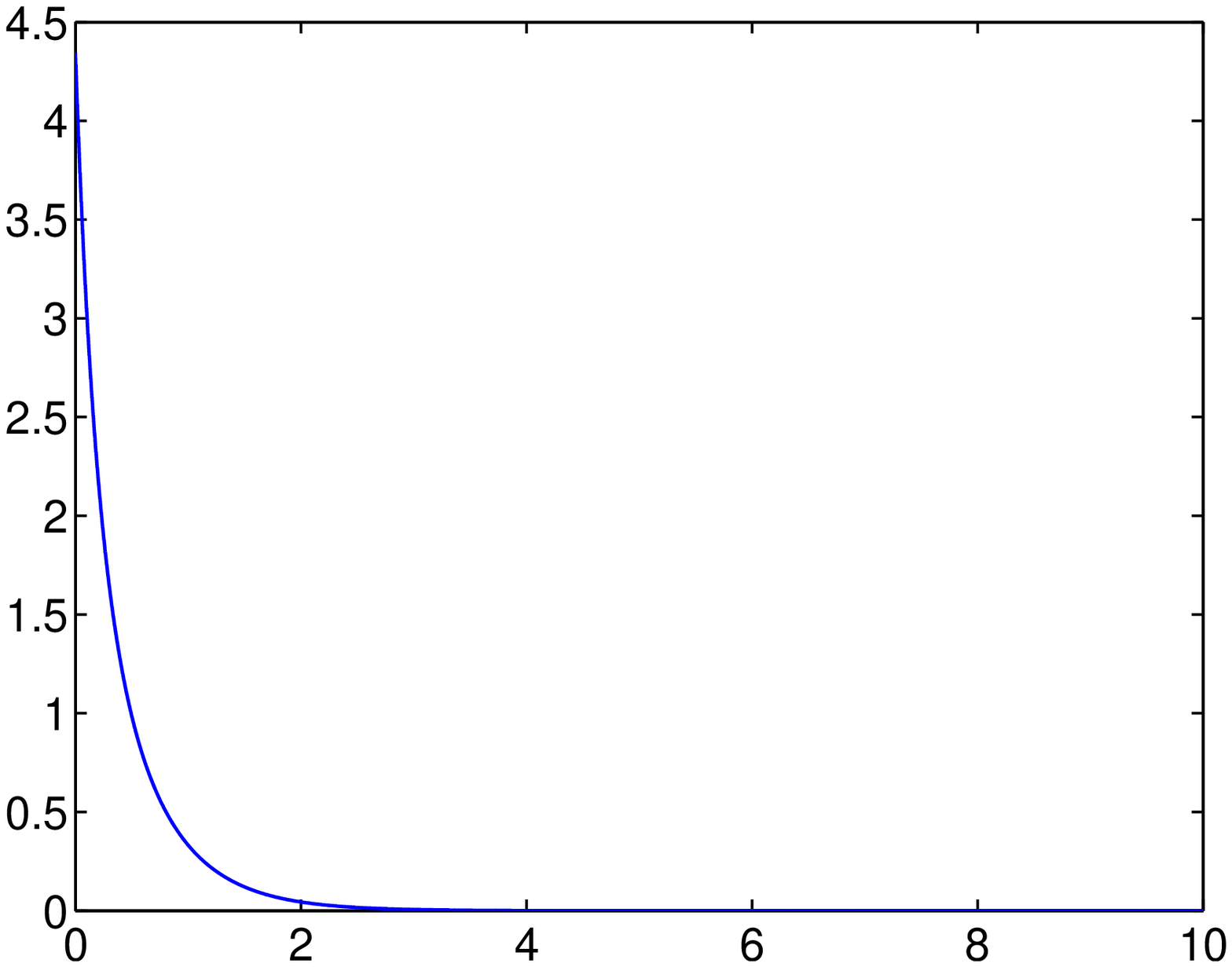}}
      \caption{(a) Trajectory of the vector field $f$ defined
        in~\eqref{eq:patchy-vf-ex}. The initial condition is $x =
        (1,1.6,-1.2)$. The trajectory converges to the equilibrium
        point $(2.88,2.88,2.88)$.  (b) Evolution of the distance to
        the equilibrium set $\EE$ of the trajectory.}
      \label{fig:patchy-vf-ex}
      \vspace*{-1ex}
    \end{figure}
    \oprocend }
\end{example}

\end{document}